\newtheorem{theorem}{Theorem}[section]
\newtheorem{proposition}[theorem]{Proposition}
\newtheorem{lemma}[theorem]{Lemma}
\newtheorem{notation}[theorem]{Notation}
\theoremstyle{remark}
\newtheorem{remark}[theorem]{Remark}
\newcommand{\R}{\mathbb{R}}
\newcommand{\N}{\mathbb{N}}
\newcommand{\Z}{\mathbb{Z}}
\newcommand{\C}{\mathbb{C}}
\newcommand{\D}{\mathbb{D}}
\newcommand{\Dc}{\mathcal{D}}
\newcommand{\Ec}{\mathcal{E}}
\newcommand{\Fc}{\mathcal{F}}
\newcommand{\Nc}{\mathcal{N}}
\newcommand{\Hc}{\mathcal{H}}
\newcommand{\Tc}{\mathcal{T}}
\newcommand{\xf}{\mathbf{x}}
\newcommand{\yf}{\mathbf{y}}
\newcommand{\zf}{\mathbf{z}}
\newcommand{\Expect}[1]{\mathbb{E} \left[ #1 \right] }
\newcommand{\prob}{\mathbb{P}}
\newcommand{\Prob}[1]{\mathbb{P} \left( #1 \right) }
\newcommand{\abs}[1]{\left\vert #1 \right\vert}
\newcommand{\norme}[1]{\left\| #1 \right\| }
\newcommand{\scalar}[1]{\left\langle #1 \right\rangle }
\newcommand{\floor}[1]{\left\lfloor #1 \right\rfloor}
\newcommand{\indic}[1]{ \mathbf{1}_{ \left\{ #1 \right\} } }
\newcommand{\eps}{\varepsilon}
\DeclareMathOperator{\CR}{CR}
\DeclareMathOperator{\loc}{loc}
\newcommand{\E}{\mathbb{E}}
\newcommand{\mc}[1]{\mathcal{#1}}
\renewcommand{\P}{\mathbb{P}}
\newcommand{\reals}{\mathbb{R}}
\newcommand{\integers}{\mathbb{Z}}
\newcommand{\1}{\mathbf{1}}
\renewcommand{\S}{\mathcal{S}}
\DeclareMathOperator{\supp}{supp}
\renewcommand{\Re}{\operatorname{Re}}
\renewcommand{\Im}{\operatorname{Im}}
\title{Noise-like analytic properties of imaginary chaos}
\author{Juhan Aru}
\address{École Polytechnique Fédérale de Lausanne, Lausanne, Switzerland}
\email{juhan.aru@epfl.ch}
\author{Guillaume Baverez}
\address{Institut de Math\'ematiques de Marseille, Marseille, France}
\email{guillaume.baverez@univ-amu.fr}
\author{Antoine Jego}
\address{CNRS \& CEREMADE, Université Paris-Dauphine, PSL University, France}
\email{antoine.jego@dauphine.psl.eu}
\author{Janne Junnila}
\address{University of Helsinki, Helsinki, Finland}
\email{janne.junnila@helsinki.fi}
\date {}
\numberwithin{equation}{section}
\begin{document}

\renewcommand{\theparagraph}{\thesubsection.\arabic{paragraph}} 

\begin{abstract}
In this note we continue the study of imaginary multiplicative chaos $\mu_\beta := \exp(i \beta \Gamma)$, where $\Gamma$ is a two-dimensional continuum Gaussian free field. We concentrate here on the fine-scale analytic properties of $|\mu_\beta(Q(x,r))|$ as $r \to 0$, where $Q(x,r)$ is a square of side-length $2r$ centred at $x$. More precisely, we prove monofractality of this process, a law of the iterated logarithm as $r \to 0$ and analyse its exceptional points, which have a close connection to fast points of Brownian motion. Some of the technical ideas developed to address these questions also help us pin down the exact Besov regularity of imaginary chaos, a question left open in \cite{JSW}. All the mentioned properties illustrate the noise-like behaviour of the imaginary chaos. We conclude by proving that the processes $x \mapsto |\mu_\beta(Q(x,r))|^2$, when normalised additively and multiplicatively, converge as $r \to 0$ in law, but not in probability, to white noise; this suggests that all the information of the multiplicative chaos is contained in the angular parts of $\mu_\beta(Q(x,r))$.     
\end{abstract}

\maketitle

\setcounter{tocdepth}{1}
\tableofcontents

\section{Introduction and main results}

Let $D \subset \R^2$ be an open bounded simply connected domain and let $\Gamma$ be the Gaussian free field in $D$ with zero boundary condition. That is, $\Gamma$ is a Gaussian, centred Schwartz distribution whose covariance is given by the Green function $G_D$ of the Laplacian with Dirichlet boundary condition, in the sense that for any smooth test functions $\varphi$, $\psi$,
\[
\E[(\Gamma,\varphi) (\Gamma, \psi)] = \int_{D \times D} \varphi(x) G_D(x,y) \psi(y) dx \, dy.
\]
In this paper, we normalise the Green function so that $G_D(x,y) \sim -\log \abs{x-y}$ when $x - y \to 0$. For $\beta \in (0,\sqrt{2})$ the imaginary chaos $\mu(x)$ is formally defined as
$e^{i \beta \Gamma(x) } \indic{x \in D}.$
Because $\Gamma$ is not defined pointwise, the exponential of $i \beta \Gamma$ is \textit{a priori} not well defined. A simple regularisation procedure helps out: for $\eps >0$, let $\Gamma_\eps(x)$ be the $\eps$-circle average of $\Gamma$ about $x \in D$; then $\mu$ is defined as the limit
\begin{equation}
    \label{E:defmu}
\mu(x) \qquad \coloneqq \qquad \lim_{\eps \to 0} \eps^{-\beta^2/2} e^{i \beta \Gamma_\eps(x)}  \indic{x \in D}
\end{equation}
where the limit takes place in $L^2$ in any negative Sobolev space $H^s$ with $s <-1$; see \cite[Section 3.1]{JSW}. In particular, $(\mu, \phi)$ are well-defined random variables simultaneously for all smooth test functions $\phi$. Further, if $f = \mathbf{1}_A$ is the indicator function of a Borel set $A \subset D$ (or indeed any measurable bounded function), one can define $\mu(A) = (\mu, \mathbf{1}_A)$ as the $L^2$ limit of $\eps^{-\beta^2/2} \int_D f e^{i \beta \Gamma_\eps}$.

Imaginary multiplicative chaos seems to have first come up in the context of directed polymers \cite{derrida93}, though now we could name it rather the total mass of imaginary multiplicative cascade. Imaginary multiplicative cascades, i.e. imaginary chaos for log-correlated fields with a hierarchical structure, were studied in \cite{barral2010b} and subsequently in \cite{barral2010a, barral2010c}. A generalization to more general log-correlated fields came about in the context of complex chaos in \cite{complexGMC}. In \cite{JSW} the authors not only studied imaginary chaos as an interesting analytic object, but made a direct connection to the scaling limit of spins of the critical XOR-Ising model. Thereafter imaginary chaos has been studied in several papers - in \cite{aru2022density, MR4814241} it was shown that $(\mu, f)$ has a bounded smooth positive density on $\C$, in \cite{ajreconstruct} it was shown that imaginary chaos of the free field - in contrast to imaginary cascades - contains all the information of the underlying log-correlated field. Finally, due to its connections to Coloumb gases, imaginary chaos is now making appearance also in probabilistic constructions of 2D conformal field theories \cite{gkrimaginary}.

In this note we continue the study of imaginary multiplicative chaos. 
We concentrate on the study of the process $(|\mu(Q(x,r))|)_{x \in D,r > 0}$, where $Q(x,r) = x + [-r,r]^2$ are squares of side length $2r$ centred at $x$ (squares are considered instead of discs so that we can refer to a result of \cite{Leble2017}; see Theorem \ref{T:LSZ} below).
We will see that this process has a very noise-like character that does not exhibit in the $r \to 0$ limit any non trivial macroscopic properties. Notice that for example the real part of $\mu(Q(x,r))$ does give rise to the interesting macroscopic structure (\cite{JSW}), as probably does its angle. Imaginary multiplicative chaos is considerably harder to study than the real chaos due to the lack of monotonicity w.r.t. underlying field, and the cancellations that need to be taken into account carefully. For example, to prove the existence of density, one can use a simple Fourier argument in the case of real chaos, but one relies on Malliavin calculus in the case of imaginary chaos. In this note, we again need to combine (functional) analytic tools with probabilistic arguments; several small technical ideas and interplays arise, which we think might be of interest in studying random fields of similar nature. 

We will next explain what is contained in the paper in terms of results, and also highlight some of the technical inputs going into proving these results.

\subsection{Monofractality}

Our first main result proves a certain monofractality of the imaginary chaos $\mu$. To state this result, we first show that the process $(\mu(Q(x,r)))_{x \in D, r \ge 0}$ has a version which is continuous. This is \textit{a priori} not clear since $\mu$ is not a random measure; see \cite[Theorem 1.2]{JSW}.

\begin{proposition}[Hölder regularity]\label{prop:Holder regularity}
The map
\[
\begin{array}{ccc}
\{ (x,r) \in D \times [0,1], Q(x,r) \subset D \} & \longrightarrow & \C \\
(x,r) & \longmapsto & \mu(Q(x,r))
\end{array}
\]
possesses a continuous modification. Moreover, for all compact subsets $K$ of $D$ and $\alpha < 1 - \beta^2/4$, there exists a random constant $C_{K,\alpha}$, almost surely finite, such that for all $Q(x,r), Q(x',r') \subset K$,
\begin{equation}
\label{eq:prop_Holder}
\abs{\mu(Q(x,r)) - \mu(Q(x',r'))} \leq C_{K,\alpha} (r \vee r')^\alpha \norme{ (x,r) - (x',r')}^\alpha.
\end{equation}
\end{proposition}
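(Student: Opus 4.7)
The plan is to prove the Hölder estimate by a careful application of Kolmogorov's continuity criterion combined with a scale-sensitive chaining argument.

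The first step is to establish the moment bound
\begin{equation*}
\E \bigl[ |\mu(Q(x,r)) - \mu(Q(x',r'))|^{2k} \bigr] \leq C_{K,k} \bigl((r \vee r')\, \norme{(x,r) - (x',r')}\bigr)^{k(2-\beta^2/2)},
\end{equation*}
valid for every integer $k \geq 1$ and uniformly over $Q(x,r), Q(x',r') \subset K$. To prove this, one writes the difference as $\mu(\mathbf{1}_{Q(x,r)} - \mathbf{1}_{Q(x',r')})$ and uses the positivity of the Coulomb-gas kernel in the standard moment formula for imaginary chaos to reduce to bounding $\E[|\mu(R)|^{2k}]$ with $R = Q(x,r) \triangle Q(x',r')$. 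Since $R$ is a union of at most four thin rectangles of dimensions $\lesssim (r \vee r') \times \norme{(x,r) - (x',r')}$, an Onsager/matching-type inequality (of the kind used in \cite{JSW}) reduces matters to controlling the Green-function integral $\int\int_{R \times R} e^{\beta^2 G(x,y)}\, dx\, dy$, which by a direct computation is at most $C(ab)^{2-\beta^2/2}$ on a rectangle of sides $a \leq b$. The claimed moment bound follows.

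The second step is to deduce the almost-sure Hölder estimate via a dyadic decomposition in the scale variable. Using only $r \vee r' \leq 1$, Kolmogorov's criterion in the three-dimensional parameter $(x,r)$ already yields a continuous modification that is jointly Hölder with any exponent $\alpha < 1 - \beta^2/4$, hence in particular proves the existence of the continuous modification. To obtain the sharper product form \eqref{eq:prop_Holder}, one restricts the process to each dyadic shell $\{r \in [2^{-n-1}, 2^{-n}]\}$, where the moment estimate reads $\leq C_k (r_n \, \norme{(x,r) - (x',r')})^{k(2-\beta^2/2)}$ with $r_n = 2^{-n}$. Kolmogorov applied to this shell then produces a random Hölder constant $K_n$ with $\E[K_n^{2k}] \leq C'_k r_n^{k(2-\beta^2/2)}$, and a Markov/Borel--Cantelli argument gives, almost surely, $K_n \leq r_n^\alpha$ for all $n$ large enough and any $\alpha < 1 - \beta^2/4$. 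This establishes the product bound for pairs lying in a common shell, and pairs in different shells are handled by telescoping through intermediate points at each dyadic level, the geometric decay $r_n^\alpha$ ensuring that the chaining sum is controlled by its largest-scale contribution.

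The main obstacle is the sharp moment estimate in the first step. Without exploiting both the anisotropic geometry of $Q(x,r) \triangle Q(x',r')$ and the Onsager-type cancellations in the Coulomb gas, one would only reach a Hölder exponent $\alpha < 1/2$, strictly weaker than the claimed $1 - \beta^2/4$ for all $\beta \in (0, \sqrt{2})$. A secondary subtlety is that the \emph{dyadic} chaining in the second step is essential to obtain the product form $(r \vee r')^\alpha \norme{(x,r) - (x',r')}^\alpha$ rather than the weaker $\norme{(x,r) - (x',r')}^{2\alpha}$: the extra $(r \vee r')^\alpha$ factor, which encodes the vanishing $\mu(Q(x,0)) = 0$ and the fact that $|\mu(Q(x,r))|$ is typically of size $r^{2-\beta^2/2}$, emerges precisely from the scale-dependence of the Borel--Cantelli constants.
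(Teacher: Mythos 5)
Your overall strategy coincides with the paper's: reduce to the symmetric difference $Q(x,r)\triangle Q(x',r')$, cover it by thin rectangles of dimensions $\lesssim (r\vee r')\times\norme{(x,r)-(x',r')}$, prove the moment bound $\E|\mu(Q(x,r))-\mu(Q(x',r'))|^{2k}\leq C_{K,k}\{(r\vee r')\norme{(x,r)-(x',r')}\}^{(2-\beta^2/2)k}$, and then run a scale-sensitive Kolmogorov argument. Your second step (restrict to dyadic shells in $r$, get shell Hölder constants with $\E[K_n^{2k}]\lesssim 2^{-n(2-\beta^2/2)k}$, Borel--Cantelli, then chain across shells) is sound and is essentially a re-derivation of the variant of Kolmogorov's theorem the paper isolates in Appendix~\ref{app:kolmogorov} (Lemma~\ref{lem:Kolmogorov}); nothing to object to there.

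The gap is in your first step, at exactly the point you call the main obstacle. The claim that an ``Onsager/matching-type inequality reduces matters to controlling the Green-function integral $\int\int_{R\times R}e^{\beta^2 G(x,y)}\,dx\,dy$'' is not a valid reduction for $2k$-th moments with $k\geq 2$. What the Onsager inequality (Lemma~\ref{L:bound_Ec}) actually gives is a bound on the $2k$-point correlation by $\prod_j(\min_{i\neq j}|z_i-z_j|)^{-\beta^2/2}$, i.e.\ it leaves you with a $2k$-point minimum-distance integral over the anisotropic rectangle, not with a power of the two-point integral; there is no hypercontractivity or Gaussian-pairing inequality for imaginary chaos that converts higher moments into powers of the second moment. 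Moreover the second moment alone could never suffice: with exponent $2-\beta^2/2<3$ it does not even meet the Kolmogorov threshold for a three-parameter process, so the higher-moment estimate over thin rectangles is unavoidable and is the genuine content of the step. The paper supplies it as Lemma~\ref{lemma:rectangle1}, proved not by direct integration but by comparing $\Gamma$ with a translation-invariant log-correlated field and exploiting the super-multiplicativity recursion of Lemma~\ref{lemma:recursion1} together with the known moment asymptotics on squares (Lemma~\ref{L:integral}). Alternatively, for a fixed $k$ (which is all Proposition~\ref{prop:Holder regularity} needs) one could adapt the nearest-neighbour decomposition behind Lemma~\ref{L:integral} to $a\times b$ rectangles and verify $C_k(ab)^{(2-\beta^2/2)k}$ directly, but that is a genuine multi-point computation, not a consequence of the bound $\int\int_{R\times R}e^{\beta^2 G}\leq C(ab)^{2-\beta^2/2}$. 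As written, your proof asserts the key estimate without a proof of it; once Lemma~\ref{lemma:rectangle1} (or an equivalent) is granted, the remainder of your argument goes through.
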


As in the real case, also the imaginary chaos is believed to share a lot of properties with complex multiplicative cascades which have been thoroughly studied in \cite{barral2010a,barral2010b,barral2010c}. In particular, if we restrict ourselves to cascades built using complex exponentials of a normal distribution, \cite[Theorem 2.9]{barral2010b} establishes the monofractality of the cascade. Our first theorem claims that the same result holds for the imaginary chaos $\mu$.

\begin{theorem}[Monofractality]\label{th:monofractality}
With probability one, simultaneously for all $x \in D$,
\begin{equation}\label{eq:th_monofractality}
\liminf_{r \to 0} \frac{\log \abs{\mu(Q(x,r))}}{ \log r} = 2 - \beta^2/2.
\end{equation}
\end{theorem}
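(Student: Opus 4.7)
Write $L := 2 - \beta^2/2$. The plan is to prove the two one-sided inequalities $\liminf \geq L$ and $\liminf \leq L$ separately, both simultaneously for all $x$.

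The direction $\liminf \geq L$ --- equivalently, $|\mu(Q(x,r))| \leq r^{L-\eps}$ eventually, uniformly in $x$ --- is the routine direction. Selberg-type moment computations for imaginary chaos give $\E|\mu(Q(x,r))|^{2p} \leq C_p\, r^{p(4-\beta^2)} = C_p\, r^{2pL}$ for every integer $p \geq 1$. Chebyshev at level $2p$ combined with a union bound over a dyadic spatial grid $(x_{n,k})_k$ of spacing $r_n = 2^{-n}$ (with $\lesssim r_n^{-2}$ points in a compact $K \subset D$) yields $\P(\exists k: |\mu(Q(x_{n,k},r_n))| > r_n^{L-\eps}) \leq C_p r_n^{2p\eps - 2}$. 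Choosing $p > 1/\eps$ makes this summable in $n$, so the first Borel--Cantelli lemma plus the Hölder estimate of Proposition~\ref{prop:Holder regularity} (for any $\alpha < 1-\beta^2/4 = L/2$ with $2\alpha > L-\eps$) propagates the bound from the grid to arbitrary $x \in K$ and arbitrary $r \in [r_{n+1},r_n]$.

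The direction $\liminf \leq L$ --- for every $x$, $|\mu(Q(x,r))| \geq r^{L+\eps}$ at infinitely many scales --- is the substantive direction, requiring both anti-concentration and a multi-scale Borel--Cantelli. The first ingredient is a small-ball estimate
\[
\P\bigl(|\mu(Q(x,r))| \leq r^{L+\eps}\bigr) \leq C\, r^{2\eps},
\]
valid uniformly in small $r$ and $x$ in a compact subset of $D$. This should follow from the boundedness of the density of $r^{-L}\mu(Q(x,r))$ on $\C$, itself a consequence of~\cite{aru2022density} combined with scale-invariance and the Markov property of the GFF. For a fixed $x$, since $\sum_n r_n^{2\eps} < \infty$, Borel--Cantelli already gives $\liminf \leq L + \eps$ a.s. To upgrade this to a statement uniform in $x$, I would iterate the small-ball estimate over scales. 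Using a decomposition of $\Gamma$ into approximately independent scale bands (e.g.\ via a white-noise or circle-average decomposition), the fine-scale chaos at scale $r_n$ is approximately independent of coarser scales, and one expects
\[
\P\Bigl( \bigcap_{n=N}^{2N} \{|\mu(Q(y,r_n))| \leq 2 r_n^{L+\eps}\} \Bigr) \leq \prod_{n=N}^{2N} C r_n^{2\eps} \leq 2^{-c\eps N^2}
\]
for any fixed $y$. A union bound over a spatial grid of spacing $r_{2N}^{1+\rho}$, combined with the Hölder estimate of Proposition~\ref{prop:Holder regularity} with $\rho$ chosen so that $\alpha(2+\rho) > L+\eps$ (which is possible since $\alpha$ can be taken arbitrarily close to $L/2$), yields a total probability $\leq 2^{4N(1+\rho) - c\eps N^2}$, summable in $N$. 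Borel--Cantelli and then letting $\eps \to 0^+$ along a countable sequence conclude.

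The main obstacle is making the iterated conditional small-ball estimate rigorous: the GFF does not decompose into scale bands whose chaos moduli are truly independent, so one cannot literally multiply the per-scale probabilities. Instead, one must work with a conditional version of the density bound that holds uniformly in the coarser scales already conditioned upon. I expect this requires combining a white-noise (or circle-average) decomposition of $\Gamma$ with a conditional adaptation of the bounded-density argument of~\cite{aru2022density}, along with careful control --- again via Proposition~\ref{prop:Holder regularity} --- of the error introduced by approximating $|\mu(Q(y,r_n))|$ by the purely fine-scale chaos on $Q(y,r_n)$, i.e.\ estimating the contribution of the oscillation of $\Gamma_{r_n}$ over $Q(y,r_n)$.
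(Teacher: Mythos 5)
Your overall architecture (anti-concentration at many scales, approximate independence across scales, union bound over a spatial grid, H\"older interpolation to all $x$, and Proposition~\ref{prop:Holder regularity} for the easy direction --- which in fact already gives $\abs{\mu(Q(x,r))}\leq C r^{2\alpha}$ directly, so your moment/Borel--Cantelli argument there is unnecessary) does match the shape of the paper's proof. But the step you yourself flag as the ``main obstacle'' is exactly the core of the argument, and your sketch of it would not work as stated. The events $\{\abs{\mu(Q(y,r_n))}\leq 2r_n^{L+\eps}\}$ concern \emph{nested} squares: the chaos on $Q(y,r_{n+1})$ is part of the chaos on $Q(y,r_n)$, so no decomposition of $\Gamma$ into ``scale bands'' makes these events (even approximately) independent, and the product bound $\prod_n C r_n^{2\eps}$ cannot be obtained that way. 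The paper's fix is structural: replace the nested squares by the \emph{disjoint} annuli $Q(y,r_j)\setminus Q(y,r_{j+1})$ (keeping only every other scale so they are well separated), note that smallness of all the $\mu(Q(y,r_j))$ forces smallness of all the annular masses, and then apply the exact domain Markov property in disjoint buffer annuli to get genuine conditional independence given the harmonic part $h$.

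Even after that reduction, the per-annulus small-ball bound is not a routine consequence of \cite{aru2022density}: conditionally on $h$, one must bound the density of $(\mu_j, f_j)$ with $f_j = e^{i\beta h}\1_{A_j}$, i.e.\ imaginary chaos tested against a \emph{complex-valued}, discontinuous function, and one needs the bound to be quantitative in $f_j$ so that it can be integrated over the randomness of $h$. This is the content of the paper's Theorem~\ref{T:density} (density bounded by $C\,c_1(f)^{-8}\norme{f}_\infty^{-2}$, with the cone construction handling complex phases) together with the proof of Lemma~\ref{L:dec23}, where negative moments of $c_1(f_j)$ are controlled via the regularity of $h$. Your proposal gestures at ``a conditional adaptation of the bounded-density argument'' but supplies neither the disjoint-annuli trick nor the quantitative, complex-test-function density estimate, so the substantive direction remains unproved. (Two smaller points: a fixed number $K\sim 1/\delta$ of scales per grid point suffices --- one does not need $N$ growing scales and the $2^{-c\eps N^2}$ bound; and for a single $x$ your one-scale small-ball estimate, uniform in $r$, itself already requires the transfer between the Dirichlet and whole-plane chaoses, handled in the paper by Lemmas~\ref{lem:delete harmonic} and~\ref{lem:tail}.)
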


A key technical input here is the existence and boundedness of density for imaginary chaos, i.e. the density of variables of the type $(\mu, f)$ where $f$ is a test function, first proved in \cite{aru2022density} using Malliavin calculus. In Theorem \ref{T:density} we revisit the original proof to bring out some quantitative bounds on the density that help us treat several simultaneous test functions at once.


We remark that the monofractality result contrasts sharply with the case of a two-dimensional real multiplicative $M_\gamma$ defined by
\[
M_\gamma(dx) = \lim_{\eps \to 0} \eps^{\gamma^2/2} e^{\gamma \Gamma_\eps(x)} dx
\]
for $\gamma \in (0,2)$. Indeed, the quantity 
\[
\liminf_{r \to 0} \frac{\log M_\gamma(Q(x,r))}{\log r}
\]
is known to depend heavily on the point $x$ and takes values in $[ ( \sqrt{2} - \gamma/\sqrt{2} )^2, ( \sqrt{2} + \gamma/\sqrt{2} )^2 ]$  (see \cite[Section 4]{rhodes2014} for instance).

\begin{remark}\label{rem:exact scale invariance}
To get some intuition on Theorem \ref{th:monofractality} and its difference with respect to real chaos, we can consider near the origin the field $X$ with correlations given by $\Expect{X(x)X(y)} = - \log \abs{x-y}$. In that case, we can decompose $\Expect{X(x)X(y)} = -\log (\abs{x-y} /r) + \log (1/r)$ so that $X$ is the independent sum $X = X_r + N_r$ with
\[
(X_r(rx), x \in \R^2) \overset{\mathrm{(d)}}{=} (X(x),x \in \R^2)
\qquad \mathrm{and} \qquad
N_r \sim \Nc(0,\log (1/r)).
\]
In particular,
\begin{align*}
\int_{Q(0,r)} e^{i \beta X(y)} dy = r^{2-\beta^2/2} e^{i \beta N_r} \int_{Q(0,1)} e^{i \beta X_r(ry) } dy,
\end{align*}
where the $r^{-\beta^2/2}$ comes from the renormalisation of $e^{i \beta X}$.
Therefore, in the case of imaginary chaos, the distribution of $r^{-2+\beta^2/2} \abs{\int_{Q(0,r)} e^{i \beta X(y)} dy}$ does not depend on $r$. In the case of real chaos, on the contrary, 
the term $e^{\gamma N_r}$ does not simplify and can be seen to be at the root of the multifractal behaviour.
\end{remark}

\subsection{Law of the iterated logarithm and fast points}

We next detail the study of the local behaviour of $\mu$ by providing a law of the iterated logarithm for $\mu$.

\begin{theorem}[Law of the iterated logarithm]\label{th:LIL}
There exists $c^*(\beta)>0$ depending only on $\beta$ (see Theorem~\ref{T:LSZ} below) such that for any given $x \in D$, we have
\begin{equation}
\label{eq:th_LIL}
\limsup_{r \to 0} \frac{\abs{\mu(Q(x,r))}}{r^{2-\beta^2/2} (\log \abs{ \log r} )^{\beta^2/4}} = c^*(\beta)^{-\beta^2/4} \quad \quad \mathrm{a.s.}
\end{equation}
\end{theorem}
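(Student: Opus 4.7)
The plan is to derive the LIL from a sharp tail estimate for the scale-invariant quantity $Z_r := r^{-(2-\beta^2/2)} \abs{\mu(Q(x,r))}$, combined with Borel--Cantelli arguments along a geometric sequence of radii $r_n := 2^{-n}$. The approximate scale-invariance highlighted in Remark \ref{rem:exact scale invariance} suggests that for small $r$ the law of $Z_r$ is essentially fixed, comparable to that of the unit-scale variable $\abs{\int_{Q(0,1)} e^{i\beta X(y)} dy}$ for the exact $-\log$ field $X$, and the first and most important step will be to identify its tail.

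The main analytic input will be to show that there exists $c^*(\beta) \in (0,\infty)$ such that, uniformly in $r$ small enough,
\[
-\log \P(Z_r > t) \;=\; c^*(\beta)\, t^{4/\beta^2}\bigl(1+o(1)\bigr) \qquad \text{as } t \to \infty.
\]
The upper bound on this tail will follow from moment estimates of the form $\E \abs{\mu(Q(x,r))}^{2n} \leq C^n (n!)^{\beta^2/2}\, r^{2n(2-\beta^2/2)}$, derived from the Coulomb-gas / Selberg-type representation of $\E|Z_1|^{2n}$, followed by Markov's inequality optimised at $n \sim t^{4/\beta^2}$. For the matching lower bound I would proceed either via a Cram\'er / Laplace analysis of the same moment sequence, or via a Cameron--Martin shift of the free field in the direction of a judiciously chosen function $h$ of prescribed Dirichlet energy, using the quantitative density bounds of Theorem \ref{T:density} to turn the shifted lower bound on $\E\abs{\mu(Q(x,r))}$ into a genuine lower bound on its tail.

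For the upper bound in \eqref{eq:th_LIL}, fix $\eps > 0$ and set $t_n := (1+\eps)(c^*)^{-\beta^2/4} (\log n)^{\beta^2/4}$. The tail estimate yields $\P(Z_{r_n} > t_n) \leq n^{-(1+\eps)^{4/\beta^2}}$, which is summable, so by Borel--Cantelli $Z_{r_n} \leq t_n$ eventually almost surely. To upgrade this to all small $r$, I would refine the geometric sequence by a polynomial-in-$n$ grid of subscales inside each interval $[r_{n+1}, r_n]$, apply the same Borel--Cantelli step to the enlarged countable family, and fill in the remaining gaps using the H\"older modulus of continuity from Proposition \ref{prop:Holder regularity}; the multiplicative slack $(1+\eps)$ absorbs the resulting errors.

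For the lower bound, one needs the events $\{Z_{r_n} > (1-\eps)(c^*)^{-\beta^2/4}(\log n)^{\beta^2/4}\}$ to occur infinitely often, which requires essentially independent events across scales. Using the spatial Markov property of the GFF, decompose $\Gamma = h^{(n)} + \Gamma^{(n)}$ on the ball $B(x, r_{n-1})$, where $\Gamma^{(n)}$ is an independent Dirichlet free field on $B(x, r_{n-1})$ and $h^{(n)}$ is harmonic on the same ball and nearly constant on $Q(x, r_n)$. Passing to a sufficiently lacunary subsequence of scales renders the main contributions to the $Z_{r_n}$ independent up to controllable errors, and the second Borel--Cantelli lemma combined with the lower tail from the first step yields the desired almost-sure lower bound. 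The main obstacle throughout will be the sharp tail estimate: matching the constant $c^*(\beta)$ in the upper and lower bounds is a delicate large-deviations problem that goes beyond the standard moment bounds available for imaginary chaos, and will, I expect, be the real crux of the argument.
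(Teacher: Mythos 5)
There are two genuine gaps. The first is the sharp tail estimate, which you correctly identify as the crux but for which your proposed routes would not deliver the matching constant. Optimising Markov's inequality over moments bounded by $C^n (n!)^{\beta^2/2} r^{(2-\beta^2/2)2n}$ only gives an upper tail $\exp(-c\,t^{4/\beta^2})$ with a constant polluted by the unknown $C$: at this level of precision the exponential-in-$N$ prefactor in $\E|\mu^*(Q(0,1))|^{2N} = N^{\beta^2 N/2}e^{c^{**}(\beta)N+o(N)}$ is exactly what determines $c^*(\beta)$, and obtaining it is the deep two-component plasma free-energy result of L\'ebl\'e--Serfaty--Zeitouni which the paper does not reprove but imports (and whose passage from moments to tails is re-derived carefully in Appendix \ref{app:tail}, since the published argument has a gap). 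Your lower-bound strategies fare worse: a Cameron--Martin shift $\Gamma\mapsto\Gamma+h$ only multiplies $\mu$ by the phase $e^{i\beta h}$, so it cannot boost $|\mu(Q(x,r))|$ at all (unlike for real chaos), and the density bound of Theorem \ref{T:density} controls $\norme{\rho}_\infty$, i.e.\ small-ball probabilities, not upper-tail lower bounds. Without a matching-constant tail, you can only prove that the $\limsup$ lies between two positive constants, not the stated equality.

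The second gap is the interpolation between discrete scales in the upper bound. With $r_n=2^{-n}$ and a polynomial-in-$n$ grid inside $[r_{n+1},r_n]$, the H\"older estimate of Proposition \ref{prop:Holder regularity} bounds $|\mu(Q(x,r))-\mu(Q(x,r'))|$ by $C(\omega) r_n^{\alpha}|r-r'|^{\alpha}$ with $\alpha<1-\beta^2/4$, hence $2\alpha<2-\beta^2/2$; to make this error $o(r_n^{2-\beta^2/2})$ you need grid spacing $\lesssim r_n^{(2-\beta^2/2-\alpha)/\alpha}$, i.e.\ exponentially many grid points per dyadic scale, while the per-point tail probability is only polynomially small in $n$, so the union bound diverges. (Also, with ratio $1/2$ between consecutive scales, even perfect control at the $r_n$ costs a factor $2^{2-\beta^2/2}$ in the constant.) The paper circumvents this by taking a geometric sequence with ratio $\gamma$ close to $1$ and controlling $\sup_{r\in[\gamma^{n+1},\gamma^n]}|\mu(Q(x,\gamma^n)\setminus Q(x,r))|$ directly through a Sobolev-embedding/high-moment argument (Lemmas \ref{L:bound_Sobolev_Fn} and \ref{lemma:sup_summable}), where the smallness comes from the thinness factor $(1-\gamma)^{(1-\beta^2/4-s)p}$ with $p$ chosen growing; fixed-exponent H\"older continuity alone cannot reproduce this. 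Your lower-bound outline (Markov property, lacunary scales, second Borel--Cantelli) does match the paper's Lemma \ref{lem:LIL_sequence}, granted the tail input and the comparison Lemmas \ref{lem:delete harmonic} and \ref{lem:tail}.
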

To prove this result, we again turn to the analytic toolbox and start off by making use of Sobolev embeddings to swap a uniform norm $\norme{\cdot}_\infty$ for a more tractable one. See the beginning of Section \ref{SS:sup_summable} for more details.

As a next step we study exceptional points where $\mu$ has an atypical behaviour and breaks the law of the iterated logarithm. These points are an analogue of the fast points of Brownian motion \cite[Section 10.1]{morters_peres_2010} - indeed, if one thinks of imaginary chaos as a noise-field,
then integrating it over squares would exactly give a 2D analogue for the definition of fast points. Whereas different in nature, in terms of being atypical, one can also recall here the notion of thick points of the GFF $\Gamma$ \cite{HuMillerPeres2010}.

In the following, we will denote by $\dim_\Hc(A)$ the Hausdorff dimension of a set $A \subset \C$.

\begin{theorem}[Exceptional points]\label{th:exceptional points}
We have
\begin{equation}
\label{eq:th_exceptional_sup}
\sup_{x \in D} \limsup_{r \to 0} \frac{ \abs{\mu(Q(x,r))} }{r^{2-\beta^2/2} \abs{\log r}^{\beta^2/4}} = 2^{\beta^2/4} c^*(\beta)^{-\beta^2/4} \quad \quad \mathrm{a.s.}
\end{equation}
Moreover, if we define for $a>0$ the set of $a$-fast points by
\begin{equation}
\Tc(a) \coloneqq \left\{ x \in D: \limsup_{r \to 0} \frac{ \abs{\mu(Q(x,r))} }{r^{2-\beta^2/2} \abs{\log r}^{\beta^2/4}} \geq a \right\},
\end{equation}
then we have for all $0\leq a\leq\left( 2/c^*(\beta) \right)^{\beta^2/4}$,
\begin{equation}
\label{eq:th_exceptional_number}
\dim_\Hc \left( \Tc(a) \right)
= 2 - c^*(\beta) a^{4/\beta^2}
\quad \quad \mathrm{a.s.}
\end{equation}
\end{theorem}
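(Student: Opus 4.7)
The strategy follows the classical Orey--Taylor framework for fast points of Brownian motion (as in \cite[Section 10.1]{morters_peres_2010}), adapted to the present setting. The crucial analytic input is a sharp tail estimate
\[
\P\!\left( |\mu(Q(x,r))| \ge a\, r^{2-\beta^2/2} |\log r|^{\beta^2/4} \right) = r^{c^*(\beta) a^{4/\beta^2} + o(1)},
\]
which I would obtain by combining the quantitative density bounds of Theorem \ref{T:density} with the exact scale invariance explained in Remark \ref{rem:exact scale invariance}: together they reduce the question to the statement that the density of $|\mu(Q(0,1))|$ decays like $\exp(-c^*(\beta) t^{4/\beta^2})$ at infinity. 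The constant $c^*(\beta)$ must be the same as in Theorem \ref{th:LIL}, since the standard Borel--Cantelli computation translates such a tail into precisely the LIL constant $c^*(\beta)^{-\beta^2/4}$. Throughout the proof, the H\"older regularity of Proposition \ref{prop:Holder regularity} reduces every ``$\limsup_{r\to 0}$'' to a $\limsup$ along dyadic scales $r_n = 2^{-n}$ and dyadic centres.

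\textbf{Upper bounds.} Covering $D$ by the $\asymp r_n^{-2}$ dyadic squares of side $2r_n$, the expected number of $a$-fast dyadic squares at scale $r_n$ is $\asymp r_n^{c^*(\beta) a^{4/\beta^2} - 2}$. A first-moment argument combined with Borel--Cantelli then yields $\expect[\Hc^{2 - c^*(\beta) a^{4/\beta^2} + \eps}(\Tc(a))] = 0$ for every $\eps > 0$, hence the upper bound in \eqref{eq:th_exceptional_number}. Specialising to $a = (2/c^*(\beta))^{\beta^2/4}$, where the expected count is summable along a dyadic subsequence, gives the upper bound $\le 2^{\beta^2/4} c^*(\beta)^{-\beta^2/4}$ in \eqref{eq:th_exceptional_sup}.

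\textbf{Lower bounds.} For the matching lower bounds, fix $a < (2/c^*(\beta))^{\beta^2/4}$ and consider the random measures
\[
\rho_n(dx) := \frac{\1\!\left\{ |\mu(Q(x,r_n))| \ge a\, r_n^{2-\beta^2/2} |\log r_n|^{\beta^2/4} \right\}}{\P\!\left( |\mu(Q(0,r_n))| \ge a\, r_n^{2-\beta^2/2} |\log r_n|^{\beta^2/4} \right)}\, dx
\]
on $D$. Any subsequential weak limit $\rho_\infty$ is supported on $\Tc(a)$, and a Kahane-style second-moment argument shows that $\rho_\infty$ carries positive mass with finite $s$-energy for every $s < 2 - c^*(\beta) a^{4/\beta^2}$, \emph{provided} one establishes a two-point bound of the form
\[
\P\!\left( Q(x,r_n) \text{ and } Q(y,r_n) \text{ are both } a\text{-fast} \right) \lesssim \P(\text{fast})^{2} \left( \frac{|x-y|}{r_n} \right)^{-c^*(\beta) a^{4/\beta^2}}
\]
in the regime $r_n \le |x-y|$. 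Frostman's lemma then delivers \eqref{eq:th_exceptional_number}, and letting $a \uparrow (2/c^*(\beta))^{\beta^2/4}$ produces the lower bound in \eqref{eq:th_exceptional_sup}.

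The hard part will be this two-point estimate, since imaginary chaos enjoys no positivity or monotonicity and is only accessible through its densities. My plan --- and precisely the reason Theorem \ref{T:density} is sharpened to handle ``several simultaneous test functions at once'' --- is to split the GFF as $\Gamma = \Gamma^{\mathrm{c}} + \Gamma^{\mathrm{f}}$ where $\Gamma^{\mathrm{c}}$ captures the scales above $|x-y|$ (and is essentially constant on each of $Q(x,r_n)$, $Q(y,r_n)$) while the independent fine part $\Gamma^{\mathrm{f}}$ produces imaginary chaos contributions on the two squares that are approximately independent with a controllable Radon--Nikodym derivative. Propagating the multi-point density bounds of Theorem \ref{T:density} through this decomposition to recover the sharp $(|x-y|/r_n)^{-c^*(\beta) a^{4/\beta^2}}$ factor is the main technical challenge.
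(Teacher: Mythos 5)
Your skeleton (sharp one-point tail plus first moment for the upper bounds, second-moment/limsup-fractal argument for the lower bounds) is the right one, but two of your key inputs do not come from where you claim, and the hardest step is left genuinely unproved. First, the tail estimate $\P\bigl(|\mu(Q(x,r))|\ge a\,r^{2-\beta^2/2}|\log r|^{\beta^2/4}\bigr)=r^{c^*(\beta)a^{4/\beta^2}+o(1)}$ cannot be extracted from Theorem \ref{T:density}: that theorem only bounds the \emph{sup-norm} of the density of $\mu(f)$ (it is used in this paper for small-ball probabilities in the monofractality argument), and it gives no information whatsoever on the decay of the density at infinity, let alone the sharp constant $c^*(\beta)$. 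Your fallback, that the constant ``must'' agree with the one in Theorem \ref{th:LIL}, is circular, since the LIL is itself proved from the same tail input. In the paper this tail is an external result of \cite{Leble2017} (stated as \eqref{eq:rem tail}, with precise moment asymptotics behind it), transferred from the whole-plane chaos to the Dirichlet chaos at scale $r$ via the Markov-property/harmonic-part comparison of Lemmas \ref{lem:delete harmonic} and \ref{lem:tail}; some such input is indispensable. (A smaller point: discretising at dyadic centres with spacing $r_n$ is too coarse, because the available H\"older exponent is only $\alpha<1-\beta^2/4$, so the discretisation error $\sim r_n^{2\alpha}$ exceeds the threshold $r_n^{2-\beta^2/2}$; one needs a finer grid of spacing $r_n^{1+\delta}$ and then lets $\delta\to0$, as in the paper.)

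Second, for the lower bound the two-point estimate you isolate as ``the main technical challenge'' is exactly the step your proposed tools cannot deliver: multi-point density bounds control densities from above, not joint upper tails, and in the intermediate regime $r_n\le|x-y|\le r_n^{1-2\delta}$ no decomposition of the field makes the two moduli independent, so the claimed factor $(|x-y|/r_n)^{-c^*(\beta)a^{4/\beta^2}}$ would require a genuinely new argument. The paper avoids this entirely: it defines the fast-point events at lattice points $x$ using the chaos of the GFF \emph{localised} in the mesoscopic square $Q(x,r_n^{1-2\delta})$, so that events at centres with $|x-y|>2\sqrt2\,r_n^{1-2\delta}$ are \emph{exactly} independent, while the few close pairs are bounded trivially by a single one-point probability; this yields $\mathrm{Var}(M_n)\le Cr_n^{-4\delta}\,\E[M_n]$ and \cite[Proposition 10.6]{morters_peres_2010} then gives the almost-sure dimension lower bound for the limsup fractal, which is shown to sit inside $\Tc(a)$ via Proposition \ref{prop:Holder regularity} (and the harmonic-part comparison). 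Your weak-limit-of-measures route also has an unaddressed issue that a subsequential weak limit of the normalised indicator measures need not be supported on the limsup set $\Tc(a)$ and only gives positive probability rather than an a.s. statement, both of which the nested limsup-fractal machinery is designed to handle. So as written the proposal has genuine gaps at the tail input and at the lower-bound correlation structure; the localisation trick is the missing idea that makes the second-moment argument close.
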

\medskip 

\noindent \textbf{The constant \texorpdfstring{$c^*(\beta)$}{c*(beta)}} appearing in Theorems \ref{th:LIL} and \ref{th:exceptional points} comes from the asymptotics of the tails of the imaginary chaos derived in \cite[Corollary A.2]{Leble2017}. We now state this result precisely.
Consider the whole-plane GFF $\Gamma^*$ and its associated imaginary chaos $\mu^*(y) = \lim_{\eps \to 0} \eps^{-\beta^2/2} e^{i\beta \Gamma^*_\eps(y)} $, where $\Gamma_\eps^*$ is the $\eps$-regularisation of $\Gamma^*$ using circles averages. The whole-plane GFF $\Gamma^*$ is a random generalised function modulo global additive constants. These additive constants disappear when one considers $\abs{ \int_{Q(0,1)} \mu^*(y) dy }$ and so this latter object is a well-defined nonnegative real random variable.

\begin{theorem}[\cite{Leble2017}]\label{T:LSZ}
There exists a constant $c^*(\beta)>0$ which may depend on $\beta$ such that
\begin{equation}\label{eq:rem tail}
\Prob{ \abs{ \int_{Q(0,1)} \mu^*(y) dy } > t } = \exp \left( - c^*(\beta) t^{4/\beta^2} + o \left( t^{4/\beta^2} \right) \right)
\mathrm{~as~} t \to \infty.
\end{equation}
\end{theorem}

The main result of \cite{Leble2017} (Theorem 1 and Corollary 1.3 therein) actually provides extremely precise asymptotics for the moments of $\abs{ \int_{Q(0,1)} \mu^*(y) dy }^2$ (which are closely related to 2D two-component plasma). The estimate \eqref{eq:rem tail} is then essentially shown to be a consequence of their moment estimates; see Appendix \ref{app:tail} for further comments and an alternative derivation.

\subsection{Exact Besov regularity of imaginary chaos}
\medskip

The fractal properties of imaginary chaos are tightly linked to its Besov regularity, a fact that can be interpreted in the broader context of the so-called ``Frisch-Parisi conjecture'' which relates more generally multifractal properties with Besov regularity (see e.g. \cite{Jaffard00} for an account).

In \cite[Theorem 3.16]{JSW}, it was shown that for all $1\leq p,q\leq\infty$ and $s\in\R$, it a.s. holds that $\mu$ belongs to the Besov space $B^s_{p,q}(D)$ if $s<-\frac{\beta^2}{2}$ and $\mu\not\in B^s_{p,q}(D)$ if $s>-\frac{\beta^2}{2}$. In this note, using ideas that we developed to prove Theorems \ref{th:monofractality} and \ref{th:LIL}, we can settle the critical case $s=-\frac{\beta^2}{2}$, i.e. we can decide for which values of $p,q\in[1,\infty]$ it holds that $\mu\in B^{-\frac{\beta^2}{2}}_{p,q}(D)$. We also note that these values are the same as the ones found for the regularity of white noise \cite{Veraar10}. This is consistent with the fact that the imaginary chaos converges to white noise when $\beta \to \sqrt{2}$ \cite{JSW}.

\begin{theorem}[Regularity of imaginary chaos]\label{thm:regularity}
Let $1\leq p,q\leq\infty$. 
If $p< \infty$ and $q= \infty$, then $\mu\in B^{-\frac{\beta^2}{2}}_{p,q,\loc}(D)$ a.s. Otherwise, $\mu \not\in B^{-\frac{\beta^2}{2}}_{p,q,\loc}(D)$ a.s.
\end{theorem}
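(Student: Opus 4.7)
The plan is to use the Littlewood--Paley (equivalently wavelet) characterization of Besov spaces: $f\in B^s_{p,q,\loc}(D)$ iff $(2^{js}\norme{\Delta_j(\chi f)}_{L^p})_{j\ge 0}\in\ell^q$ for every smooth cut-off $\chi$ compactly supported in $D$. Using $\E[\mu(x)\overline{\mu(y)}]=e^{\beta^2 G(x,y)}\asymp\abs{x-y}^{-\beta^2}$ and a rescaling argument, one checks that $\E\norme{\Delta_j\mu}_{L^p}^p\asymp 2^{jp\beta^2/2}$ for every $p\ge 1$ (moments of imaginary chaos being finite for all orders, unlike the real case); likewise the wavelet coefficients $c_{j,k}=\scalar{\mu,\psi_{j,k}}$ behave in magnitude like $2^j\mu(Q_{j,k})$. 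In particular, the per-scale weight $2^{js}\norme{\Delta_j\mu}_{L^p}$ is of order one in expectation precisely when $s=-\beta^2/2$, and the theorem reduces to analysing the almost-sure fluctuations of these weights.

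For the positive direction ($p<\infty$, $q=\infty$), the strategy is to show $\sup_j 2^{js}\norme{\Delta_j\mu}_{L^p}<\infty$ a.s., exploiting that $\Delta_j\mu$ aggregates contributions from $\asymp 2^{2j}$ cells of size $2^{-j}$ that are nearly independent. Decomposing $\Gamma=\Gamma^{\le j}+\Gamma^{>j}$ into coarse and fine scales (as in the proofs of Theorems \ref{th:monofractality} and \ref{th:LIL}) and conditioning on the coarse field makes the fine-scale contributions of disjoint cells effectively independent; combined with the sub-Gaussian-type tail $\exp(-ct^{4/\beta^2})$ from \cite{Leble2017}, this should yield a Bernstein-type concentration bound of the shape $\Prob{\norme{\Delta_j\mu}_{L^p}^p>C\,\E\norme{\Delta_j\mu}_{L^p}^p}\le e^{-c\, 2^{2j\delta}}$ for some $\delta>0$, which is summable in $j$. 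A Borel--Cantelli argument then closes this case.

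For the negative direction I split into two subcases. When $p=q=\infty$, Theorem \ref{th:exceptional points} provides (almost surely) fast points $x^*$ at which $\abs{\mu(Q(x^*,2^{-j}))}\asymp 2^{-j(2-\beta^2/2)}j^{\beta^2/4}$ along a subsequence of $j$'s, and transferring this to wavelet coefficients via a sufficiently fine discretisation grid (together with Proposition \ref{prop:Holder regularity}) makes the per-scale weight $2^{j(s+1)}\max_k\abs{c_{j,k}}$ grow like $j^{\beta^2/4}$, hence unbounded. When $q<\infty$ and $p\ge 2$, I reduce to $p=2$ via the elementary bound $\norme{(c_{j,k})_k}_{\ell^p}\ge N^{1/p-1/2}\norme{(c_{j,k})_k}_{\ell^2}$ with $N\asymp 2^{2j}$, which exactly cancels the change in the Besov weight as $p$ varies and so makes the per-scale weight monotone in $p\in[2,\infty]$. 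For $p=2$, the second-moment identity $\E\norme{\Delta_j\mu}_{L^2}^2\asymp 2^{j\beta^2}$ together with a comparable fourth-moment upper bound yields via Paley--Zygmund a uniform lower bound $\Prob{2^{-j\beta^2/2}\norme{\Delta_j\mu}_{L^2}\ge c}\ge\delta>0$; approximate independence of $\Delta_j\mu$ across widely spaced scales (again via the scale decomposition of $\Gamma$) lets one invoke Borel--Cantelli to conclude that $2^{-j\beta^2/2}\norme{\Delta_j\mu}_{L^2}\ge c$ for infinitely many $j$ a.s., so the $\ell^q$ sum diverges.

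The main technical obstacle will be the concentration step of the positive direction: one must extract a tail bound sharp enough to beat the $2^{2j}$ cells per scale, which requires a careful interplay of the \cite{Leble2017} tail estimate, the conditional independence afforded by the scale decomposition of $\Gamma$, and the summation over cells; one likely has to treat separately the ranges $p\le 4/\beta^2$ (where $\abs{\mu(Q_{j,k})}^p$ is sub-exponential and classical Bernstein applies) and $p>4/\beta^2$ (where a higher-moment Chebyshev or a truncation is needed). A secondary technicality lies in the $p=q=\infty$ negative direction: the passage from the continuous sup in Theorem \ref{th:exceptional points} to the sup over dyadic wavelet coefficients sits borderline with respect to the Hölder exponent of Proposition \ref{prop:Holder regularity}, and must be handled by a slight refinement of the grid or by working directly with $\norme{\Delta_j\mu}_{L^\infty}$. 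The remaining ingredients -- the wavelet characterization of Besov spaces and Paley--Zygmund -- are standard.
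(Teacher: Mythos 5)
Your overall skeleton for $p<\infty$ (wavelet/Littlewood--Paley characterisation, per-scale quantities of order one in expectation at $s=-\beta^2/2$, reduction of $p\geq 2$ to $p=2$ by H\"older, which indeed makes the weighted $\ell^p$ quantity dominate the $p=2$ one) coincides with the paper's. The technical engines differ, and yours are heavier than necessary: the paper does not prove any Bernstein-type bound $e^{-c2^{2j\delta}}$, nor does it use Paley--Zygmund with approximate independence across scales. It simply bounds the variance of $S_j=\sum_{\lambda\in\Lambda_j}|\mu(u\psi_\lambda)|^{p}$ (for even $p$) by estimating the covariance of $|\mu(u\psi_{\lambda_1})|^p$ and $|\mu(u\psi_{\lambda_2})|^p$ through the Green-function cross terms when $|\lambda_1-\lambda_2|\geq 2^{-j/2}$, getting $\Prob{|A_j^p-\E A_j^p|\geq 2^{-j/8}}\leq C2^{-j/4}$, hence $A_j^p-\E A_j^p\to0$ a.s.\ by Borel--Cantelli. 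This one law-of-large-numbers statement settles both directions for $p<\infty$: boundedness of $\E A_j^p$ gives the $q=\infty$ membership, and a lower bound $\E A_j^2\geq c>0$ (an explicit second-moment computation of the wavelet coefficients) gives divergence of the $\ell^q$ sum for $q<\infty$, with no need for lower tails, for cross-scale decorrelation, or for the conditional-independence construction you invoke (which, as sketched, is left unproven and would anyway require splitting cells into well-separated sub-families and controlling the harmonic part as in Lemma~\ref{lem:delete harmonic}).

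The genuine gap is your treatment of $p=\infty$. Transferring the fast-point lower bounds of Theorem~\ref{th:exceptional points} to wavelet coefficients (or to $\norme{\Delta_j\mu}_{L^\infty}$) does not work as stated, for two reasons. First, the wavelets and Littlewood--Paley kernels have vanishing mean, so $|\langle\mu,\psi_\lambda\rangle|$ is \emph{not} comparable pointwise to $2^j|\mu(Q_\lambda)|$: the heuristic identification holds only at the level of typical size (second moments), and a point where $|\mu(Q(x^*,r))|$ is exceptionally large forces no particular signed coefficient to be large, since the oscillation of $\psi$ can cancel exactly the mass you are trying to detect. Second, the grid transfer via Proposition~\ref{prop:Holder regularity} is too lossy at this exponent: moving or resizing the square by an amount of order $r$ costs $C(r\cdot r)^{1-\beta^2/4-\eps}=Cr^{2-\beta^2/2-2\eps}$, which dominates the factor $|\log r|^{\beta^2/4}$ (or $(\log|\log r|)^{\beta^2/4}$) that constitutes the whole gain; so ``refining the grid'' cannot repair this, and you correctly sense the borderline but the proposed fixes do not resolve it. The paper avoids wavelets entirely here: by Besov duality, $\norme{\mu u}_{B^{-\beta^2/2}_{\infty,q}(\R^2)}\geq |\mu(uf_\delta)|/\norme{f_\delta}_{B^{\beta^2/2}_{1,q'}(\R^2)}$ for nonnegative bumps $f_\delta$ scaled to size $\delta$, together with the estimate $\norme{f_\delta}_{B^{\beta^2/2}_{1,q'}}\leq C\delta^{2-\beta^2/2}$ and a law-of-the-iterated-logarithm-type argument (as in Theorem~\ref{th:LIL}, run for $\mu$ tested against the scaled bumps) showing $\limsup_{\delta\to0}\delta^{-2+\beta^2/2}|\mu(f_\delta)|=+\infty$ a.s. The positivity of the test functions is exactly what makes the small-scale largeness of $\mu$ visible, and it is exactly what oscillating wavelets destroy; note also that it suffices to treat $q=\infty$ since $B^{-\beta^2/2}_{\infty,q}\subset B^{-\beta^2/2}_{\infty,\infty}$, but that case itself needs an argument of this duality/LIL type rather than the fast-point-to-wavelet transfer.
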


\subsection{The limiting field of $(|\mu(Q(x,r))|)_{x,r}$}

\noindent We have seen that the absolute value of the imaginary chaos is monofractal. Here we ask the question of whether any interesting macroscopic information is left as $r \to 0$. For technical convenience, we work with the field $ |\mu(Q(x,r)|^2$ instead, and assume without loss of generality that $D$ contains $[0,1]^2 + B(0,2)$.

First, just because of scaling, this field, converges to $0$ with no extra normalization. Further, when one normalizes by $\delta^{\beta^2 - 4}$, it converges to a constant field (this will actually follow from the proof below) and thus contains no interesting information. A natural question is whether one could regularize the process in a different manner and obtain an interesting limiting field. One option would be to look for instance at its oscillations around its mean. 

We will show that, by subtracting the mean and renormalizing well, one indeed gets a non-trivial limit; however, the limiting field turns out to be just white noise. As the convergence takes place only in law and not in probability, it seems plausible to say that in the limit most of the information on the underlying GFF is lost. In fact, following the proof below, one could make it even precise, by showing that the same result also holds conditioned on any finite number of Fourier frequencies.

\begin{theorem}\label{thm:whitenoise2}
  The processes
  \begin{equation}
      \label{E:Wr}
  W_r := (r^{\beta^2-5}(|\mu(Q(x,r))|^2 - \E(|\mu(Q(x,r))|^2)))_{x \in [0,1]^2}
  \end{equation}
  converge as $r \to 0$ in distribution, but not in probability, in the space $H^{-2}_0([0,1]^2)$, to a constant multiple of the White noise in $[0,1]^2$.
\end{theorem}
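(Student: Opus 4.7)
The plan is to prove Theorem~\ref{thm:whitenoise2} by the classical three-step scheme for convergence to white noise: (i) compute the limit covariance of $(W_r,f)$ against test functions; (ii) upgrade to joint convergence of finite-dimensional marginals via a central limit theorem; (iii) prove tightness in $H^{-2}_0$. The non-convergence in probability will follow from a decorrelation estimate between two widely separated scales.

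For (i), I would use the renormalised identity $\int f(x)|\mu(Q(x,r))|^2\,dx = \int K_{r,f}(u,v)\,e^{i\beta(\Gamma(u)-\Gamma(v))}\,du\,dv$, where $K_{r,f}(u,v) := \int f(x)\mathbf{1}_{x\in Q(u,r)\cap Q(v,r)}\,dx$ is supported in $\{|u-v|_\infty\leq 2r\}$, together with Wick's formula for complex Gaussian exponentials, to obtain
\[
\E[(W_r,f)(W_r,g)] = r^{2\beta^2-10}\int K_{r,f}(u,v)K_{r,g}(u',v')\,e^{\beta^2[G(u,v)+G(u',v')]}\bigl(e^{\beta^2\Delta_G}-1\bigr)\,du\,dv\,du'\,dv',
\]
where $\Delta_G := G(u,v')+G(v,u')-G(u,u')-G(v,v')$. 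The change of variables $u=x-rs/2,\ v=x+rs/2,\ u'=x+r\omega-rs'/2,\ v'=x+r\omega+rs'/2$ eliminates all explicit $r$-prefactors and transforms the integrand into a scale-invariant expression of the form $\int f(x)g(x+r\omega)\Phi(\omega,s,s')\,dx\,d\omega\,ds\,ds' + o(1)$ for an $r$-independent kernel $\Phi$. The delicate point is integrability of $\Phi$ in $\omega$ at infinity: a Taylor expansion of $\Delta_G$ produces a leading term of size $|\omega|^{-2}$ whose angular average on $S^1$ vanishes identically (as a harmonic function of the direction of $\omega$), leaving an integrable $|\omega|^{-4}$ tail. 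Dominated convergence then yields $\E[(W_r,f)(W_r,g)]\to c(\beta)^2 (f,g)_{L^2}$ for an explicit constant $c(\beta)>0$, the white-noise covariance structure.

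For (ii), fix $f$ and an intermediate scale $\delta = \delta(r)$ with $r\ll\delta\ll 1$, partition $[0,1]^2$ into cells $(B_i)$ of side $\delta$, and decompose $(W_r,f)\simeq\sum_i X_i$, where $X_i$ is the contribution of centres $x$ lying in a slightly smaller subcell of $B_i$. The covariance estimate of (i) applied to pairs of distant cells shows that the $X_i$ are asymptotically independent while $\sum_i\mathrm{Var}(X_i)\to c(\beta)^2\|f\|_{L^2}^2$; a Lyapunov-type condition on higher moments of $X_i$ can then be checked via the quantitative density bounds of Theorem~\ref{T:density} (or direct moment estimates for positive powers of $|\mu(Q(\cdot,r))|^2$). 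A Lindeberg CLT produces the Gaussian limit, with joint convergence for several test functions via Cram\'er--Wold. For (iii), the bound $\sup_r\E|(W_r,f)|^2\leq C\|f\|_{L^2}^2$ from (i), combined with Parseval in a Dirichlet eigenbasis of $[0,1]^2$, yields $\sup_r\E\|W_r\|^2_{H^{-s}_0}<\infty$ for any $s>1$, and compactness of the embedding $H^{-s}_0\hookrightarrow H^{-2}_0$ gives tightness.

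For the non-convergence in probability, take scales $r_n\to 0$ and $r'_n\to 0$ with $r'_n/r_n\to 0$ and extend the computation of (i) to $\E[(W_{r_n},f)(W_{r'_n},f)]$; after the same change of variables the two cutoff scales decouple in the limit and the cross-covariance tends to $0$. A joint version of the CLT in (ii) then shows $((W_{r_n},f),(W_{r'_n},f))$ converges to two independent copies of the Gaussian limit, which rules out convergence in probability to a common random variable. The most delicate step is the CLT in (ii): establishing asymptotic independence of per-cell contributions in the presence of the long-range correlations of the underlying GFF requires controlling the $8$-point function of imaginary chaos on macroscopic and microscopic scales simultaneously, and this is where the quantitative density estimates developed earlier in the paper are crucial.
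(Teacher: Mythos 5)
Your outline takes a genuinely different route from the paper (the paper never proves a CLT: it integrates $W_r$ into $B_r(x,y)=\sqrt{A}\int_{[0,x]\times[0,y]}W_r$, proves tightness of $B_r$ in $C^0([0,1]^2)$ via the centered moment bounds of Lemma~\ref{lemma:momentbound}, identifies every subsequential limit as the Brownian sheet through the independence-of-increments property obtained from the domain Markov property (Lemma~\ref{lemma:independence}) plus the variance computation (Lemma~\ref{lemma:variance}) and Zakai's characterisation, and then deduces Theorem~\ref{thm:whitenoise2} by duality). Your route could in principle work, but as written its central step has a genuine gap. A Lindeberg/Lyapunov CLT requires independent (or explicitly weakly dependent) summands; ``asymptotic independence'' of the block variables $X_i$, meaning only that pairwise covariances of distant blocks are small, together with a Lyapunov moment condition, does not imply a CLT. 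To repair this you would have to make the independence exact, e.g.\ replace the field inside each (shrunk) cell by a zero-boundary GFF in a slightly enlarged cell via the Markov property and control the replacement error in $L^2$ -- precisely the mechanism of Lemma~\ref{lem:delete harmonic} and of the proof of Lemma~\ref{lemma:independence} -- or run a cumulant/moment method. Moreover, Theorem~\ref{T:density} (density bounds near zero) is not the relevant tool for the Lyapunov condition: what is needed are bounds on centered moments such as $\E X_i^4\lesssim \delta^4$, i.e.\ exactly the cancellation-tracking estimates of Lemma~\ref{lemma:momentbound}, which are the technical heart of the paper's proof and which your outline defers to a final sentence.

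There is also a flaw in the cancellation mechanism invoked in step (i). It is true that the $|\omega|^{-2}$ term in the expansion of $\Delta_G$ has vanishing angular average (mean-value property of $\log$ with zero total charge), but angular averaging only helps after integrating over full circles in $\omega$ and cannot be combined with absolute values; it does \emph{not} leave a pointwise $O(|\omega|^{-4})$ tail, nor does it give the pointwise kernel bound $|\E[W_r(z)W_r(w)]|\lesssim r^2/|z-w|^4$ that you implicitly use for the uniform bound $\E|(W_r,f)|^2\lesssim\|f\|_{L^2}^2$ (hence tightness in $H^{-2}_0$ via Young's inequality) and for the cross-scale decorrelation in the final step. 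The cancellation that actually produces pointwise decay is the symmetrization under swapping the two points within one square (the integrand is odd in $s$ at leading order, equivalently $\mathbf{r}\mapsto 1/\mathbf{r}$, leaving $(\mathbf{r}-1)^2/\mathbf{r}=O(|\omega|^{-4})$), which is what the paper uses around \eqref{E:def_A_symmetrization} and throughout Lemmas~\ref{lemma:covariancebound}--\ref{lemma:momentbound} and \ref{lemma:notcauchy}. With that substitution your covariance identification, tightness bound and two-scale decorrelation go through; what remains missing is the rigorous CLT input described above, versus the paper's argument which sidesteps it entirely (at the price of proving the stronger $C^0$ convergence of the integrated process to the Brownian sheet).
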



Although the path towards a proof is rather clear, given the independence properties noticed in earlier parts of the paper, it is technically non-trivial to push through. For example some symmetrization tricks are needed to identify cancellations and obtain precise enough estimates; see the paragraph following \eqref{E:def_A_symmetrization} for more details.

Finally, let us also mention a related result but with a different flavour from \cite{JSW}: when $\beta \to \sqrt{2}$, one can renormalise the imaginary chaos so that it converges to white noise.

\subsection{Organisation of the paper}

The paper is organised as follows:

\begin{itemize}
    \item Section \ref{S:preliminaries} is a preliminary section recalling a few facts about imaginary multiplicative chaos. We also prove two lemmas that will be useful in the rest of the paper (Lemmas~\ref{lem:delete harmonic} and \ref{lem:tail}).
    \item Section \ref{S:monofractality} is dedicated to the study of the monofractality of $\mu$ and contains the proofs of Proposition \ref{prop:Holder regularity} and Theorem \ref{th:monofractality}.
    \item Section \ref{S:LIL} is where we prove the law of the iterated logarithm (Theorem \ref{th:LIL}).
    \item Section \ref{S:exceptional} is concerned with the study of the fast points and proves Theorem \ref{th:exceptional points}.
    \item Section \ref{sec:Besov} is where we prove the optimal Besov regularity of $\mu$, i.e. Theorem \ref{thm:regularity}.
    \item Section \ref{S:WN} is finally dedicated to the the proof of Theorem \ref{thm:whitenoise2}: the convergence of $W_r$ to white noise.
    \item Appendix \ref{app:kolmogorov}: we state and prove a variant of Kolmogorov's continuity theorem that we use in the proof of Proposition \ref{prop:Holder regularity}.
    \item Appendix \ref{app:tail} contains a few comments on the tail estimate \eqref{eq:rem tail}.
\end{itemize}

\medskip

\noindent\textbf{Acknowledgements}
The authors thank Nathana\"el Berestycki for interesting discussions during the very early stages of the project and for the referees for their very careful reading of the paper. J.A is supported by Eccellenza grant 194648 of the Swiss National Science Foundation and is a member of NCCR Swissmap. G.B is supported by ANR-21-CE40-0003. A.J. was a member of NCCR Swissmap. J.J is supported by The Finnish Centre of Excellence (CoE) in Randomness and Structures and was a member of NCCR Swissmap.

\section{Preliminaries}\label{S:preliminaries}

\subsection{Notational convention:}
In this article, we will use two different normalisations for the exponential of $i \beta \Gamma$:
\begin{equation}
\label{E:normalisation_iGMC1}
e^{i \beta \Gamma(x)} \quad = \quad \lim_{\eps \to 0} \eps^{-\beta^2/2} e^{i \beta \Gamma_\eps(x)}
\qquad \text{and} \qquad
:e^{i \beta \Gamma(x)}: \quad = \quad \lim_{\eps \to 0} e^{\beta^2 \mathrm{Var}(\Gamma_\eps(x))/2} e^{i \beta \Gamma_\eps(x)},
\end{equation}
where $\Gamma_\eps$ stands for the $\eps$-circle average of $\Gamma$. The second normalisation is the so-called Wick normalisation and is distinguished from the first one by the two colons on both sides of the exponential.
These two normalisations lead to equivalent generalised functions in the sense that they are related by
\begin{equation}
\label{E:normalisation_iGMC2}
e^{i \beta \Gamma(x)} \quad = \quad \CR(x,D)^{-\beta^2/2} :e^{i \beta \Gamma(x)}:.
\end{equation}
These normalisations have their own advantages. For instance, $e^{i \beta \Gamma}$ has a neat spatial Markov property, whereas the moments of $:e^{i \beta \Gamma}:$ have a simpler expression.

\subsection{General moment bounds for imaginary chaos}

We start this preliminary section by collecting some known estimates on the moments of the imaginary chaos. Consider a log-correlated field $X$ defined on some 2D open set $D \subset \R^2$. To be more specific and to be able to apply the existing literature, we assume that the covariance of $X$ can be written as
\[
-\log|x-y| + g(x,y)
\]
where $g \in C^2(D \times D) \times L^1(D \times D)$ is bounded from above.
For any $\mathbf{x} = (x_1, \dots, x_N) \in D^N$ and $\mathbf{y} = (y_1, \dots, y_M) \in D^M$, denote by
\begin{equation}
\label{E:defEcal}
\Ec(X;\mathbf{x},\mathbf{y}) := -\sum_{1 \leq i < j \leq N} \E[X(x_i)X(x_j)] -\sum_{1 \leq i < j \leq M} \E[X(y_i)X(y_j)] + \sum_{\substack{1 \leq i \leq N\\1 \leq j \leq M}} \E[X(x_i)X(y_j)].
\end{equation}
We will write $d \mathbf{x} = dx_1 \dots dx_N$.

\begin{lemma}\label{L:moment_JSW}
For any $f \in L^\infty(D)$,
\[
\Expect{\abs{\int :e^{i \beta X(x)}: f(x) dx}^{2N}}
= \int_{D^N \times D^N} \,d\mathbf{x} \,d\mathbf{y} \, e^{\beta^2 \Ec(X;\mathbf{x};\mathbf{y})} \prod_{i=1}^N f(x_i) \overline{f(y_i)}.
\]
\end{lemma}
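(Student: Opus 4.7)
The plan is to derive the identity first for the $\eps$-regularisation and then pass to the limit. Let $X_\eps$ denote the circle (or any mollification) average of $X$ at scale $\eps$ and set
\[
\mu_\eps(x) := e^{\beta^2 \mathrm{Var}(X_\eps(x))/2} e^{i \beta X_\eps(x)},
\]
so that $\int \mu_\eps f \to \int :\! e^{i\beta X}\!: f$ in $L^2$ as $\eps \to 0$ (see \cite{JSW}). First, I would expand the absolute value as
\[
\abs{\int \mu_\eps(x) f(x)\,dx}^{2N}
=\int_{U^{2N}} \prod_{i=1}^N \mu_\eps(x_i)\overline{\mu_\eps(y_i)}\, f(x_i)\overline{f(y_i)}\, d\mathbf{x}\,d\mathbf{y},
\]
then apply Fubini (everything is bounded pointwise by $e^{\beta^2 N \mathrm{Var}(X_\eps)} \norme{f}_\infty^{2N}$ on the compactly supported region, which is deterministic) to swap the expectation and the integral.

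The Gaussian characteristic function does the rest at the regularised level. Setting $S_\eps := \beta \sum_i X_\eps(x_i) - \beta \sum_i X_\eps(y_i)$, which is centred Gaussian, one has
\[
\Expect{\prod_{i=1}^N \mu_\eps(x_i)\overline{\mu_\eps(y_i)}}
= e^{\beta^2 \sum_i \mathrm{Var}(X_\eps(x_i))/2 + \beta^2 \sum_i \mathrm{Var}(X_\eps(y_i))/2}\, \Expect{e^{iS_\eps}}
= e^{-\beta^2 \Phi_\eps(\mathbf{x},\mathbf{y})},
\]
where a direct expansion of $\mathrm{Var}(S_\eps)/2$ cancels all the diagonal variance terms contributed by the Wick factor and leaves precisely the off-diagonal sum
\[
\Phi_\eps(\mathbf{x},\mathbf{y}) = \sum_{i<j} \E[X_\eps(x_i)X_\eps(x_j)] + \sum_{i<j}\E[X_\eps(y_i)X_\eps(y_j)] - \sum_{i,j}\E[X_\eps(x_i)X_\eps(y_j)] = -\Ec(X_\eps;\mathbf{x},\mathbf{y}).
\]
Thus for each $\eps>0$ the formula holds with $\Ec$ replaced by $\Ec(X_\eps;\mathbf{x},\mathbf{y})$.

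It remains to let $\eps \to 0$ on both sides. The left-hand side converges by the $L^2$ convergence of $\int \mu_\eps f$ together with the standard fact that its moments of all orders are uniformly bounded in $\eps$ (this is itself proven by the same moment computation combined with the integrability of $e^{\beta^2 \Ec(X;\mathbf{x},\mathbf{y})}$, which follows from the hypothesis that the covariance is bounded above by $-\log|x-y|+C$ and standard Selberg-type integral bounds valid whenever $\beta$ stays in the subcritical regime where imaginary chaos is defined). On the right-hand side, $\Ec(X_\eps;\mathbf{x},\mathbf{y}) \to \Ec(X;\mathbf{x},\mathbf{y})$ almost everywhere on $U^{2N}$, and the integrand is dominated by $\norme{f}_\infty^{2N} e^{\beta^2 \Ec(X;\mathbf{x},\mathbf{y})}$ up to a harmless multiplicative constant coming from the bound on $g$; dominated convergence then gives the claim.

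The only non-routine point is the justification of the dominated convergence in the last step, i.e.\ the integrability of $e^{\beta^2 \Ec(X;\mathbf{x},\mathbf{y})}$ against Lebesgue measure on $U^{2N}$. This is where the log-correlated hypothesis on $X$ and the regularity of $g$ are used, and where the implicit subcriticality assumption on $\beta$ enters; the bound can be obtained either by a Onsager-type lemma or by a direct induction on $N$ using the Hardy--Littlewood--Sobolev inequality applied to the potential $|x-y|^{-\beta^2}$, exactly as in the derivation of $2N$-th moment bounds for imaginary chaos in \cite{JSW}.
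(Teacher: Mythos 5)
Your proposal is correct and is essentially the argument the paper points to: it gives no proof of its own but cites \cite[Section 3.2]{JSW}, where the identity is obtained exactly as you do, by expanding the $2N$-th moment of the regularised Wick exponential, evaluating the Gaussian characteristic function so that the Wick factors cancel the diagonal variances, and passing to the limit using the $L^2$ convergence of the chaos together with an Onsager-type bound. The one small imprecision is in the final domination step: the pointwise bound $\Ec(X_\eps;\mathbf{x},\mathbf{y})\leq \Ec(X;\mathbf{x},\mathbf{y})+CN$ is not automatic, and the dominating function should instead be the $\eps$-uniform Onsager bound $\prod_j \min_{i\neq j}|z_i-z_j|^{-\beta^2/2}e^{CN}$ (as in Lemmas \ref{L:bound_Ec} and \ref{L:integral}), which you do identify as the key integrability input.
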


\begin{proof}
See \cite[Section 3.2]{JSW}.
\end{proof}

To bound the moments, we will moreover use the following Onsager inequality:

\begin{lemma}[\cite{JSW}, Proposition 3.6 (i)]\label{L:bound_Ec}
Let $K$ be a compact subset of $D$ and $(\mathbf{x}, \mathbf{y}) \in D^N \times D^N$. Denote by $z_i = x_i$ and $z_{N+i} = y_i$ for $i=1, \dots, N$. Then
\[
\Ec(X;\xf;\yf) \leq - \frac{1}{2} \sum_{j=1}^{2N} \log \min_{i \neq j} |z_i - z_j| + CN
\]
for some constant $C>0$ depending only on $g$ and $K$.
\end{lemma}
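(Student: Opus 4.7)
The statement is the standard Onsager electrostatic inequality for log-correlated Gaussian fields, and the natural proof is to exploit the positive semidefiniteness of the covariance applied to a well-chosen combination of $\pm 1$ charges smeared at adaptive scales. The plan is as follows. Introduce $\sigma_i = +1$ for $i \leq N$ and $\sigma_i = -1$ for $i > N$, so that
\[
\Ec(X;\xf;\yf) = -\sum_{1 \leq i < j \leq 2N} \sigma_i \sigma_j \E[X(z_i) X(z_j)].
\]
For each $i$, define the adaptive radius $r_i := \tfrac{1}{2} \min_{j \neq i} |z_i - z_j|$; by construction the open disks $B(z_i, r_i)$ are pairwise disjoint, and each $r_i$ is bounded above by $\tfrac{1}{2}\operatorname{diam}(K)$. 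Let $\rho_i$ be the uniform probability measure on the circle $\partial B(z_i, r_i)$, and let $X^{(i)} := \int X \, d\rho_i$ be the corresponding circle average. Then $Y := \sum_i \sigma_i X^{(i)}$ is a well-defined Gaussian variable, and its variance is nonnegative, giving the fundamental inequality
\[
0 \leq \sum_{i=1}^{2N} \E[(X^{(i)})^2] + 2 \sum_{1 \leq i<j \leq 2N} \sigma_i \sigma_j \E[X^{(i)} X^{(j)}].
\]

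The next step is to evaluate each term using the covariance decomposition $\E[X(w)X(w')] = -\log|w-w'| + g(w,w')$. The mean-value property for harmonic functions applied to $w' \mapsto -\log|w-w'|$ (for $w'$ on a circle not containing $w$) gives the exact identities
\[
\int\!\!\int -\log|w-w'|\, d\rho_i(w) d\rho_i(w') = -\log r_i, \qquad \int\!\!\int -\log|w-w'|\, d\rho_i(w) d\rho_j(w') = -\log|z_i - z_j| \quad (i \neq j),
\]
the second one using $r_i + r_j \leq |z_i - z_j|$. For the smooth part, write $\bar g_{ij} := \int\!\!\int g\, d\rho_i d\rho_j$ and use a second-order Taylor expansion of $g \in C^2$ around $(z_i, z_j)$: because the circles are centered at $z_i$, the first-order contributions integrate to zero, leaving $|\bar g_{ij} - g(z_i, z_j)| \leq C(r_i^2 + r_j^2)$ with $C = C(K,g)$. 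Substituting these expressions, the PSD inequality becomes
\[
0 \leq -\sum_i \log r_i + \sum_i g(z_i, z_i) - 2\sum_{i<j} \sigma_i \sigma_j \log|z_i - z_j| + 2\sum_{i<j} \sigma_i \sigma_j\, g(z_i, z_j) + O\!\left( N \sum_i r_i^2 \right).
\]

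At this stage comes the crucial algebraic observation: the term $2\sum_{i<j}\sigma_i\sigma_j g(z_i, z_j)$ is precisely what one needs to rewrite $-2\sum_{i<j}\sigma_i\sigma_j \log|z_i - z_j| + 2 \sum_{i<j}\sigma_i\sigma_j g(z_i, z_j) = -2 \Ec$. This means the potentially $O(N^2)$ contribution coming from $g$ cancels exactly against the analogous contribution in $\Ec$, and does not need to be controlled in absolute value. Rearranging yields
\[
2 \Ec \leq -\sum_i \log r_i + \sum_i g(z_i, z_i) + C N \sum_i r_i^2.
\]
The diagonal $g$ piece is bounded by $2N \|g\|_{L^\infty(K\times K)}$. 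The second-order error is handled by the disjointness of the disks $B(z_i, r_i)$: since they all lie in a bounded neighborhood of $K$, area monotonicity gives $\sum_i \pi r_i^2 \leq |K_\delta| = O(1)$, so the total error is $O(N)$. Finally $-\log r_i = -\log \min_{j \neq i}|z_i - z_j| + \log 2$, and dividing by two yields the claimed inequality. The main delicate point — and the reason the naive smearing does not work — is precisely the interplay in the previous paragraph: one must match the regularity of $g$ (i.e.\ $C^2$, giving second-order Taylor error) against the packing estimate $\sum r_i^2 = O(1)$, and simultaneously rely on the exact cancellation of the off-diagonal $g$ sum with that appearing in $\Ec$. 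Without either of these two ingredients one ends up with an $O(N^2)$ correction rather than $O(N)$.
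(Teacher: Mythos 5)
Your strategy is the classical Onsager electrostatic argument: smear the $\pm 1$ charges onto circles of adaptive radius $r_i=\tfrac12\min_{j\neq i}|z_i-z_j|$, use positivity of the covariance quadratic form, the exact mean-value identities for the $-\log$ kernel, a second-order Taylor expansion of $g$ (so that the first-order terms vanish by the symmetry of the circles), and the packing bound $\sum_i r_i^2=O(1)$ coming from disjointness of the disks. The algebra checks out: the off-diagonal $g$-sum cancels exactly against the one hidden in $\Ec$, leaving a total error $O\bigl(N\sum_i r_i^2\bigr)=O(N)$, and you correctly identify that this cancellation together with the $C^2$ hypothesis on $g$ is what keeps the correction at $O(N)$ rather than $O(N^2)$. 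For the record, the paper does not prove this lemma at all — it cites \cite{JSW}, Proposition 3.6(i), where $g$ is less regular and the argument is different — so your direct proof is a legitimate self-contained route under the stated $C^2$ assumption (with the points read as lying in $K$, as clearly intended, since otherwise a constant depending only on $g$ and $K$ is impossible).

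There is, however, one genuine gap as written: nothing guarantees $B(z_i,r_i)\subset U$. Your radii can be as large as $\tfrac12\operatorname{diam}(K)$, so the circles may leave $U$; then the circle averages $X^{(i)}$ are not defined (the field, and the kernel $g$, live only on $U$), and your Taylor estimate invokes second derivatives of $g$ on products of disks that need not be contained in $U\times U$ — and even when they are, you need $\|g\|_{C^2}$ on a \emph{fixed} compact subset of $U$ for the constant to depend only on $g$ and $K$. The standard repair is to truncate the radii, e.g. $r_i=\tfrac12\min\bigl(\min_{j\neq i}|z_i-z_j|,\,\operatorname{dist}(K,\partial U)\bigr)$: disjointness, both mean-value identities (which only need $r_i+r_j\le|z_i-z_j|$), the Taylor bound (now with the $C^2$-norm of $g$ on $K'\times K'$, $K'$ the closed $\tfrac12\operatorname{dist}(K,\partial U)$-neighbourhood of $K$) and the packing estimate all go through unchanged, and since $\min_{j\neq i}|z_i-z_j|\le\operatorname{diam}(K)$ the truncation changes each $-\log r_i$ by at most the additive constant $\log 2+\log^{+}\bigl(\operatorname{diam}(K)/\operatorname{dist}(K,\partial U)\bigr)$, i.e. an extra $C(K)N$, which the statement tolerates. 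A minor additional point: justify the positivity step either by noting that the covariance kernel is positive semidefinite when integrated against the finite-energy signed measure $\sum_i\sigma_i\rho_i$, or that $X$ tested against each $\rho_i$ is a genuine Gaussian variable because the kernel is integrable against $\rho_i\otimes\rho_i$. With these repairs your proof is complete.
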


\begin{lemma}[\cite{JSW}, Lemma 3.10]\label{L:integral}
Let $B(0,1)$ be the unit ball in $\R^2$. For all $\beta \in (0,\sqrt{2})$, there exists $C=C(\beta,d)>0$ such that for all $p \geq 1$,
\[
\int_{B(0,1)^p} d \zf \exp \Big( - \frac{\beta^2}{2} \sum_{j=1}^p \log \min_{i \neq j} |z_i - z_j| \Big) \leq C^p p^{\beta^2 p/4}.
\]
\end{lemma}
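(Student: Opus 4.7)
The bound reflects the following heuristic: for $p$ points placed in a bounded planar region, the typical nearest-neighbor distance $r_j := \min_{i\neq j}|z_i-z_j|$ is of order $1/\sqrt{p}$, so each factor $r_j^{-\beta^2/2}$ is typically of size $p^{\beta^2/4}$ and the product is of size $p^{\beta^2 p/4}$. The difficulty is that configurations with some $r_j$ very small produce very large integrand values while having small Lebesgue measure, so the bound must balance both effects.

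My plan relies on two inputs. First, a packing inequality: since $|z_i-z_j|\geq \max(r_i,r_j)\geq (r_i+r_j)/2$ for any $i\neq j$, the half-balls $B(z_j,r_j/2)$ are pairwise disjoint, so $\sum_j r_j^2\leq C_0$ for some absolute constant $C_0$. Second, structural properties of the nearest-neighbor map $\sigma(j) := \arg\min_{i\neq j}|z_i-z_j|$, with $r_j=|z_j-z_{\sigma(j)}|$: in $\R^2$ each point is the nearest neighbor of at most $6$ others (standard $\pi/3$-angle argument), and each connected component of the functional digraph of $\sigma$ contains a unique length-$2$ cycle (mutual nearest neighbors) with the remaining vertices organised as trees rooted at the cycle.

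I would then partition $B(0,1)^p = \bigsqcup_\sigma A_\sigma$ according to $\sigma$, and on each cell change variables to one root position per component together with displacements $w_j = z_j - z_{\sigma(j)} \in \R^2$. The Jacobian is $1$ and the integrand factorises across components as $\prod_{\mathrm{cycles}}|w|^{-\beta^2}\prod_{\mathrm{trees}}|w|^{-\beta^2/2}$. Each cycle factor integrates to a finite constant because $\beta^2<2$. The tree factors, subject to the packing constraint $\sum_j |w_j|^2 \leq C_0$, reduce after the substitution $s_j = |w_j|^2$ to a Dirichlet-type integral of the form $\int_{s_j\geq 0,\,\sum s_j\leq C_0}\prod s_j^{-\beta^2/4}\,ds$, which is finite precisely because $\beta^2<4$. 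Applying Stirling to the resulting Gamma-function ratio produces a per-$\sigma$ bound of order $C(\beta)^p\, p^{-p(1-\beta^2/4)}$, and summing over admissible $\sigma$'s -- whose number is controlled by Cayley-type counts of labeled rooted forests and the in-degree constraint -- gives the announced bound $C^p p^{\beta^2 p/4}$.

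The main obstacle is to balance carefully the combinatorial count of $\sigma$'s against the small per-cell Dirichlet bound. Without exploiting the in-degree-$\leq 6$ restriction, a naive enumeration over all functional digraphs over-counts by a factor of order $p^p$ that cannot be absorbed into the $C^p$ prefactor. The two-dimensional geometric rigidity of the nearest-neighbor graph is therefore not a cosmetic simplification but a key analytic input that exactly matches the Dirichlet-integral gain.
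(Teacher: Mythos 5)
The paper does not reprove this statement---it is quoted verbatim from [JSW, Lemma 3.10]---so your argument has to stand on its own, and as written it has a genuine gap in the treatment of the mutual-nearest-neighbour pairs. After fixing the nearest-neighbour map $\sigma$ and passing to root-plus-displacement coordinates, you propose to integrate each $2$-cycle factor $|w|^{-\beta^2}$ to a finite constant (using only $\beta^2<2$) and to reserve the packing constraint $\sum_j |w_j|^2\le C_0$ for the tree displacements. This is exactly where the argument breaks: consider the maps $\sigma$ that are perfect matchings (every component a $2$-cycle, no tree vertices). For such $\sigma$ your per-cell bound degenerates to $C^p$ (there are no tree factors, hence no Dirichlet gain), while the number of such $\sigma$ is $(p-1)!!\asymp (p/e)^{p/2}$, and all in-degrees equal $1$, so the degree-$\le 6$ restriction does not thin them out. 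Summing your bounds over these cells alone already gives $C^p p^{p/2}$, which is strictly larger than the target $C^p p^{\beta^2 p/4}$ because $\beta^2<2$. Relatedly, the claimed per-$\sigma$ bound $C^p p^{-p(1-\beta^2/4)}$ is only valid when the tree vertices form a positive fraction of all points, so the final summation does not close in the cycle-heavy regime.

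The fix stays within your framework: the packing inequality constrains \emph{all} the $r_j$, and for a $2$-cycle $\{a,b\}$ one has $r_a=r_b=|w_b|$, so the cycle displacements must be kept inside the constraint $\sum_j r_j^2\le C_0$ rather than integrated freely. Doing so, a cell whose digraph has $c$ cycles and $m=p-2c$ tree vertices is bounded by a Dirichlet integral $\int_{\sum_k s_k+\sum_j t_j\le C_0}\prod_{k\le c}s_k^{-\beta^2/2}\prod_{j\le m}t_j^{-\beta^2/4}\,ds\,dt\lesssim C^p/\Gamma\bigl(c(1-\beta^2/2)+m(1-\beta^2/4)+1\bigr)$, and this must be balanced against the number of admissible $\sigma$ with that component structure, which is at most $\binom{p}{2c}(2c-1)!!\cdot 2c\,(m+2c)^{m-1}\le C^p c^c p^m$ by Cayley's formula; a short optimisation over $c$ then yields $C^p p^{\beta^2 p/4}$ uniformly in the component structure. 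Note that in this corrected bookkeeping the in-degree-$\le 6$ fact plays no role (the Cayley-type count suffices, and the degree bound cannot help with matchings anyway), so the emphasis in your last paragraph is misplaced: what matches the combinatorial count is the constrained Dirichlet integral applied to all displacement variables, cycles included.
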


\begin{notation}
For $x, y \in D$, we will denote by
\begin{equation}
    \label{E:def_GD'}
G_D'(x,y) = G_D(x,y) - \frac{1}{2}\log \CR(x,D) - \frac12 \log \CR(y,D).
\end{equation}
For any $\mathbf{x} = (x_1, \dots, x_N) \in D^N$ and $\mathbf{y} = (y_1, \dots, y_M) \in D^M$, denote by
\begin{equation}
\label{E:defEcal'}
\Ec'(\Gamma;\mathbf{x},\mathbf{y}) := -\sum_{1 \leq i < j \leq N} G_D'(x_i,x_j) -\sum_{1 \leq i < j \leq M} G_D'(y_i,y_j) + \sum_{\substack{1 \leq i \leq N\\1 \leq j \leq M}} G_D'(x_i,y_j).
\end{equation}
By Lemma \ref{L:moment_JSW} and the difference of normalisation \eqref{E:normalisation_iGMC2} between $\mu$ and the Wick normalisation of $e^{i \beta \Gamma}$, we have for all test function $f : D \to \R$,
\begin{equation}
    \label{E:moment_iGFF2}
\E \abs{\int_ D \mu(x) f(x) \, dx}^2 = \int_{D \times D} e^{\beta^2 G_D'(x,y)} f(x) f(y) \,dx \,dy
\end{equation}
and, more generally, for all $N \geq 1$,
\begin{equation}
    \label{E:moment_iGFF}
\E \abs{\int_ D \mu(x) f(x) \, dx}^{2N} = \int_{D^N \times D^N}
e^{\beta^2 \Ec'(\Gamma;\xf;\yf)} \prod_{j=1}^N f(x_j) f(y_j) \,d \xf \,d \yf.
\end{equation}
\end{notation}

\subsection{Moment bounds for rectangles}

We will also need several times to control how the moments of imaginary chaos depend on the side-lengths of a rectangle. This is given by the following lemma:

\begin{lemma}\label{lemma:rectangle1}
Let $K$ be a compact subset of $D$. There exists $C = C(K,\beta)$ such that the following holds. For any $a,b \in (0,1]$, any $a \times b$ rectangle $Q \subset K$ and any $N \geq 1$, we have
  \[\E | \mu(Q) |^{2N} \le C^N (a b)^{(2 - \frac{\beta^2}{2})N}  N^{\frac{\beta^2}{2}N}.\]
\end{lemma}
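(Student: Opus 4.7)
The strategy combines the even-moment formula, Onsager's inequality, and a rearrangement step reducing the rectangle to a disk of the same area.

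First, applying the moment formula~\eqref{E:moment_iGFF} with $f = \mathbf{1}_Q$ gives
\[
\E|\mu(Q)|^{2N} = \int_{Q^N \times Q^N} e^{\beta^2 \Ec'(\Gamma;\xf,\yf)}\, d\xf\, d\yf.
\]
Using Lemma~\ref{L:bound_Ec} (Onsager's inequality) applied to $\Gamma$, together with the observation that $\Ec'$ differs from the corresponding Onsager-type quantity $\Ec$ only by a sum of $\log \CR(\cdot,D)$ terms that are uniformly bounded on the compact $K$, the problem reduces to showing
\[
I_Q := \int_{Q^{2N}} \exp\!\Bigl(-\tfrac{\beta^2}{2}\sum_{j=1}^{2N}\log\min_{i\ne j}|z_i-z_j|\Bigr) dz \;\le\; C^N (ab)^{(2-\beta^2/2)N} N^{\beta^2 N/2}.
\]

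The key step is then a rearrangement: if $B \subset \R^2$ is a disk of area $|B| = ab$, we claim $I_Q \le I_B$. The integrand depends on the configuration only through pairwise distances and is monotone-decreasing in each of them, so concentrating the integration domain into a disk of the same area should only enlarge the integral. For $N = 1$ this is an immediate consequence of the classical Riesz rearrangement inequality applied to $|x-y|^{-\beta^2}$, giving directly $I_Q \le C(ab)^{2-\beta^2/2}$. For general $N$, I would proceed by an iterative one-variable rearrangement. Once $I_Q \le I_B$ is in place, Lemma~\ref{L:integral} finishes the proof: writing $B = B(x_0, r)$ with $\pi r^2 = ab$ and rescaling $z \mapsto z/r$, the integrand scales as $r^{-\beta^2 N}$ and the volume element as $r^{4N}$, so
\[
I_B \le C^N r^{(4-\beta^2)N} (2N)^{\beta^2 N/2},
\]
and $r^{(4-\beta^2)N}$ is precisely $(ab/\pi)^{(2-\beta^2/2)N}$, which yields the claimed bound after absorbing $\pi$ into the constant.

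The main obstacle is the rearrangement step for $N \ge 2$. Because of the $\min$ in the integrand, the kernel is not of a translation-invariant product form, so Brascamp-Lieb-Luttinger does not apply directly. The crude expansion $\min_{i\ne j}(\cdots)^{-\beta^2/2} \le \sum_{i\ne j}(\cdots)^{-\beta^2/2}$ followed by BLL on each summand loses a combinatorial factor of order $(2N)^{2N}$, which vastly exceeds the target $N^{\beta^2 N/2}$. One therefore needs a finer symmetrization adapted to the min-kernel, either via an inductive argument integrating one variable at a time (using that, with the other $2N-1$ points fixed, the integrand is decreasing in each distance $|z_j - z_i|$), or by directly adapting the proof of Lemma~\ref{L:integral} to rectangular domains of the same area as the relevant disk.
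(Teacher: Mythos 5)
Your reduction via \eqref{E:moment_iGFF} and the Onsager inequality (Lemma~\ref{L:bound_Ec}) is fine, since the $\log\CR(\cdot,D)$ terms are uniformly bounded on $K$, and the $N=1$ case via Riesz rearrangement is correct. But the whole argument then hinges on the rearrangement inequality $I_Q \le I_B$ for the min-kernel, and this step is not proved --- you yourself flag it as the main obstacle. Neither of the repairs you sketch is workable as stated: an iterative one-variable rearrangement fails because, with the other $2N-1$ points fixed, the integrand $\prod_{k}\bigl(\min_{i\neq k}|z_i-z_k|\bigr)^{-\beta^2/2}$ is not a symmetrically decreasing function of the remaining variable about any single centre (it involves a minimum of distances to several fixed points, and the variable being rearranged also enters the factors indexed by $k\neq j$), so the classical rearrangement theorems do not apply one variable at a time; and Riesz--Brascamp--Lieb--Luttinger requires a translation-invariant product-form kernel and simultaneous rearrangement of all the sets, which is exactly what is missing here, as you note. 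Your fallback, ``adapt the proof of Lemma~\ref{L:integral} to rectangles of the same area,'' is essentially a request to prove the lemma directly and is genuinely nontrivial: the content of the statement is the joint dependence $(ab)^{(2-\beta^2/2)N}N^{\beta^2N/2}$ for thin rectangles, whereas naive rescaling by the longer side $b$ only gives $b^{(4-\beta^2)N}N^{\beta^2N/2}$, which is far too large when $a\ll b$. So for $N\geq 2$ the proposal has a real gap.

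For comparison, the paper's proof avoids any rearrangement. It decomposes $\Gamma = Z + Y$ on $K$ with $Z$ a stationary log-correlated field and $Y$ H\"older-regular and independent (\cite[Theorem~4.5]{aru2022density}), discards $Y$ by Jensen's inequality, and then exploits stationarity of $Z$: tiling $[0,1]^2$ by $m=\lfloor 1/a\rfloor\lfloor 1/b\rfloor$ translates of the $a\times b$ rectangle and applying the reverse recursion of Lemma~\ref{lemma:recursion1} gives
\[
\E \Big| \int_Q :e^{i\beta Z}: \Big|^{2N} \le \frac{(N!)^2}{((mN)!)^{2/m}}\Big(\E\Big|\int_{[0,1]^2} :e^{i\beta Z}:\Big|^{2Nm}\Big)^{1/m},
\]
after which the known unit-square moment bound and Stirling's formula produce precisely the factor $m^{-2N}(Nm)^{\beta^2N/2}$, i.e.\ the claimed $(ab)^{(2-\beta^2/2)N}N^{\beta^2N/2}$. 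Unless you can actually establish the min-kernel rearrangement inequality (or some substitute giving the correct thin-rectangle behaviour), your argument does not prove the lemma.
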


Before proving Lemma \ref{lemma:rectangle1}, we first recall the following recursive inequality which works for any log-correlated field:

\begin{lemma}\label{lemma:recursion1}
Let $\nu$ be an imaginary chaos defined on some domain $D$ (with the Wick normalisation) and $A \subset D$ be a measurable set.
Let $m \geq 1$ and $A_1,\dots,A_m$ be disjoint measurable subsets of $A$. Then for all $N_1, \dots, N_m \geq 1$, we have
  \[\frac{1}{(N!)^2}\E |\nu(A)|^{2N} \ge \prod_{j=1}^m \frac{1}{(N_j!)^2} \E |\nu(A_j)|^{2N_j}, \quad \text{where} \quad N = N_1 + \dots + N_m.
  \]
\end{lemma}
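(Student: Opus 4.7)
The plan is to derive the inequality directly from the integral representation of moments in Lemma \ref{L:moment_JSW}, combining a symmetrisation step with a Jensen-type estimate on a cross-correlation term. Applied to $f=\mathbf{1}_A$ and $f=\mathbf{1}_{A_j}$, the moment formula expresses both sides of the desired inequality as integrals of the same nonnegative kernel $e^{\beta^2 \Ec(X;\xf,\yf)}$, which is separately symmetric in the coordinates of $\xf$ and of $\yf$.

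First, I would lower bound $\E |\nu(A)|^{2N}$ by restricting the domain of integration. Fix a canonical partition $I_1\sqcup\dots\sqcup I_m=\{1,\dots,N\}$ with $|I_j|=N_j$, let $\sigma_0$ be the assignment sending $I_j$ to $j$, and for each pair $(\sigma_x,\sigma_y)$ of assignments $\{1,\dots,N\}\to\{1,\dots,m\}$ with the same multiplicities $(N_1,\dots,N_m)$, consider the region $\{x_i\in A_{\sigma_x(i)},\,y_i\in A_{\sigma_y(i)}\}\subset A^N\times A^N$. Distinct pairs yield disjoint regions, and by the separate permutation symmetry of $e^{\beta^2 \Ec}$ (in $\xf$ and in $\yf$), each of the $(N!/\prod_j N_j!)^2$ such regions carries the same integral as the diagonal region $R_0\coloneqq\{(\xf,\yf):x_i,y_i\in A_{\sigma_0(i)}\text{ for all }i\}$. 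Positivity of the integrand then yields
\[
\E |\nu(A)|^{2N} \;\ge\; \Bigl(\frac{N!}{\prod_j N_j!}\Bigr)^{\!2} \int_{R_0} e^{\beta^2 \Ec(X;\xf,\yf)}\, d\xf\, d\yf.
\]

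It remains to show $\int_{R_0} e^{\beta^2 \Ec}\, d\xf\, d\yf \ge \prod_j \E|\nu(A_j)|^{2N_j}$. On the product region $R_0=\prod_j (A_j^{N_j}\times A_j^{N_j})$, I would decompose $\Ec = \sum_j \Ec_j + C$, where $\Ec_j$ denotes $\Ec$ built from the variables of group $j$ alone, and the cross-group contribution rearranges (using symmetry of the covariance $G$ of $X$) as
\[
C \;=\; -\sum_{j<j'}\sum_{i\in I_j,\,i'\in I_{j'}} \E\bigl[(X(x_i)-X(y_i))(X(x_{i'})-X(y_{i'}))\bigr].
\]
Introduce on $R_0$ the product probability measure $\rho=\bigotimes_j \rho_j$, each factor $\rho_j$ being the measure on $A_j^{N_j}\times A_j^{N_j}$ proportional to $e^{\beta^2 \Ec_j}$. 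The identity $\Ec_j(\xf_j,\yf_j)=\Ec_j(\yf_j,\xf_j)$ makes $\rho_j$ invariant under the swap of its two $A_j^{N_j}$ blocks, so the marginal of $(x_i,y_i)$ under $\rho_j$ is symmetric in its two arguments; in particular $\E_{\rho_j}[G(x_i,a)-G(y_i,a)]=0$ for every fixed $a$. Combined with the independence of $\rho_j$ and $\rho_{j'}$ for $j\neq j'$, this forces each summand of $C$ to have vanishing $\rho$-expectation, i.e.\ $\E_\rho[C]=0$. Jensen's inequality then yields $\E_\rho[e^{\beta^2 C}]\ge e^{\beta^2 \E_\rho[C]}=1$, which rewrites as
\[
\int_{R_0} e^{\beta^2 \Ec}\,d\xf\, d\yf \;\ge\; \int_{R_0} e^{\beta^2 \sum_j \Ec_j}\,d\xf\, d\yf \;=\; \prod_j \E|\nu(A_j)|^{2N_j}.
\]
Combining with the first step and dividing by $(N!)^2$ gives the claim.

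The main obstacle is conceptual rather than technical: although the cross term $C$ is not pointwise nonnegative, it is built out of the antisymmetric increments $X(x_i)-X(y_i)$ and therefore vanishes in expectation against the manifestly symmetric product measure $\rho$. Once this structural observation has been made, the combinatorial symmetrisation count of the first step and the application of Jensen's inequality in the second step reduce to careful bookkeeping; one should also remark that if some $\E|\nu(A_j)|^{2N_j}$ vanishes then the inequality is trivial, which legitimises the normalisation of $\rho_j$ in the non-degenerate case.
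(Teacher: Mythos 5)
Your argument is correct: the restriction to labelled configurations with the multinomial count, the block-swap symmetry which makes the cross-energy $C$ have zero mean under the product measure $\rho$, and Jensen's inequality give exactly the stated supermultiplicativity, and this is essentially the argument behind the proof the paper defers to (it only cites \cite[Proposition 3.14]{JSW}). The only point worth adding is a one-line check that the logarithmic singularities in $C$ are integrable against the densities $e^{\beta^2\Ec_j}$ (e.g.\ via the Onsager-type bounds of Lemmas \ref{L:bound_Ec} and \ref{L:integral}), so that $\E_\rho[C]$ is well defined and equal to zero before Jensen is invoked; alternatively, averaging the integrand pointwise over the $2^m$ block swaps and using AM--GM avoids this integrability remark altogether.
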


\begin{proof}
  See the proof of \cite[Proposition~3.14]{JSW}.
\end{proof}


\begin{proof}[Proof of Lemma \ref{lemma:rectangle1}]
It will be useful for us to compare the field $\Gamma_D$ in $K$ with a stationary log-correlated field.
Such a comparison is given by \cite[Theorem~4.5]{aru2022density}, which allows us to write in $K$ the decomposition
\begin{equation}\label{eq:decomposition1}
  \Gamma_D = Z + Y,
\end{equation}
where $Z$ and $Y$ are two independent fields with the following properties.
The field $Y$ is Hölder-regular.
On the other hand,
$Z$ is a field defined on the entire space $\R^2$ whose covariance is of the form
\begin{equation}
    \label{E:covarianceZ}
\E Z(x)Z(y) = \int_0^{\infty} k(e^u(x - y))(1 - e^{-\delta u}) \, du
\end{equation}
for some $\delta > 0$ and some rotationally invariant and compactly supported seed covariance $k \colon \reals^2 \to \reals$ with $k(0) = 1$ (see \cite[Assumption 4.2]{aru2022density} for the list of properties satisfied by $k$). 
In this proof, the property that will be the most useful for us is the fact that the law of $Z$ is invariant under translations.

Let $Q$ be a rectangle included in $K$. Let us first show that
\begin{equation}
\label{E:rectangle_claim}
\E |\mu(Q)^{2N}| \le C^N \E \big| \int_Q :e^{i \beta Z(x)}: \, dx \big|^{2N}.
\end{equation}
By independence of $Z$ and $Y$ and by Lemma \ref{L:moment_JSW},
  \begin{align*}
  \E |\mu(Q)^{2N}| & = \int_{Q^N \times Q^N} d\xf \,d\yf e^{\beta^2 \Ec(Z;\xf;\yf)} \prod_{j=1}^N \CR(x_j,D)^{-\frac{\beta^2}{2}} \CR(y_j,D)^{-\frac{\beta^2}{2}} \\
  & \qquad \times \Expect{ e^{i \beta Y(x_1)} \dots e^{i \beta Y(x_N)} e^{-i \beta Y(y_1)} \dots e^{-i \beta Y(y_N)} }. 
  \end{align*}
  By Jensen's inequality, the expectation in the integrand is at most 1. The remaining terms being nonnegative and $\sup_{x \in K} \CR(x,D)^{-\beta^2/2}$ being finite, we deduce that $\E |\mu(Q)^{2N}|$ is at most
  \[
  C^N \int_{Q^N \times Q^N} d\xf \,d\yf e^{\beta^2 \Ec(Z;\xf;\yf)}
  = C^N \E \big| \int_Q :e^{i \beta Z(x)}: \, dx \big|^{2N},
  \]
  as stated in \eqref{E:rectangle_claim}.

It is now enough to show that for any $a \times b$ rectangle $Q \subset \R^2$, 
\begin{equation}
\label{E:rectangle_claim2}
\E \big| \int_Q :e^{i \beta Z(x)}: \, dx \big|^{2N}
\leq C^N (a b)^{(2 - \frac{\beta^2}{2})N}  N^{\frac{\beta^2}{2}N}.
\end{equation}
The square $[0,1]^2$ contains $m = \lfloor \frac{1}{a} \rfloor \lfloor \frac{1}{b} \rfloor$ disjoint rectangles of size $a \times b$.
  Then by Lemma~\ref{lemma:recursion1} and stationarity we have
  \[\frac{1}{((m N)!)^2} \E \Big| \int_{[0,1]^2} :e^{i \beta Z(x)}: \, dx \Big|^{2m N} \ge \Big( \frac{1}{(N!)^2} \E \Big| \int_Q :e^{i \beta Z(x)}: \, dx\Big|^{2N} \Big)^m,\]
  which gives us
  \begin{equation}
    \label{E:pf_rectangle}
  \E \Big| \int_Q :e^{i \beta Z(x)}: \, dx \Big|^{2N} \le \frac{(N!)^2}{((mN)!)^{2/m}} \Big(\E \Big|\int_{[0,1]^2} :e^{i \beta Z(x)}: \,d x\Big|^{2Nm}\Big)^{1/m}.
  \end{equation}
  By applying \cite[Theorem~1.3]{JSW} (or, equivalently, Lemmas \ref{L:moment_JSW}, \ref{L:bound_Ec} and \ref{L:integral})  we have
  \[\Big(\E \Big|\int_{[0,1]^2} :e^{i \beta Z(x)}: \,d x\Big|^{2Nm}\Big)^{1/m} \le C^N (Nm)^{\frac{\beta^2}{2} N}\]
  for some constant $C>0$.
  One easily checks by using Stirling's formula that the remaining multiplicative factor on the right hand side of \eqref{E:pf_rectangle} is at most $C^N m^{-2N}$, concluding the proof of \eqref{E:rectangle_claim2}.
This finishes the proof of Lemma \ref{lemma:rectangle1}.
\end{proof}

\subsection{Contribution of the harmonic extension to imaginary chaos}

The proofs of some of our main results are based on the spatial Markov property which allows us to find a decomposition analogous to the one presented in Remark \ref{rem:exact scale invariance}. The difference here is that the field $N_r$ corresponding to the harmonic part of the GFF is no longer constant. However, as it does not vary too rapidly we can recover an argument similar to Remark \ref{rem:exact scale invariance}. This is the content of the next lemma. We will need this lemma for both the GFF $\Gamma$ in $D$ with Dirichlet boundary conditions and for the whole-plane GFF $\Gamma^*$ (see \cite[Proposition 2.8]{ImaginaryGeometryIV} for the spatial Markov property for $\Gamma^*$).

\begin{lemma}\label{lem:delete harmonic}
Let $x \in D$ and $R_0 >0$ such that $Q(x,R_0) \subset D$. Let $0 < 2r < R < R_0$. We decompose the GFF $\Gamma$ in $D$ and the whole-plane GFF $\Gamma^*$ as the independent sums
\[
\Gamma = \Gamma^{(R)} + h^{(R)}
\quad \mathrm{~and~} \quad
\Gamma^* = \Gamma^{(R)} + {h^*}^{(R)}
\]
where $\Gamma^{(R)}$ is a GFF in $Q(x,R)$ and $h^{(R)}$ and ${h^*}^{(R)}$ are harmonic in $Q(x,R)$. By denoting $\mu^{(R)}$ the imaginary chaos associated to $\Gamma^{(R)}$, there exist $c,c_{R_0}>0$ which may depend on $\beta$ and on $\beta, R_0$ respectively such that for all $\lambda \geq (r/R)^{1/2}$,
\begin{align}
\label{eq:lem delete harmonic}
& \Prob{ \abs{ \abs{ \mu(Q(x,r)) } - \abs{ \mu^{(R)}(Q(x,r)) } } > \lambda r^{2-\frac{\beta^2}{2}}  } \leq \frac{1}{c_{R_0}} \exp \Big( - c_{R_0} \Big( \frac{R}{r} \lambda^2 \Big)^{ \frac{2}{\beta^2+1} } \Big),
\end{align}
\begin{align}
\label{eq:lem delete harmonic2}
\Prob{ \abs{ \abs{ \mu^*(Q(x,r)) } - \abs{ \mu^{(R)}(Q(x,r)) } } > \lambda r^{2-\frac{\beta^2}{2}}  } \leq \frac{1}{c} \exp \Big( - c \Big( \frac{R}{r} \lambda^2 \Big)^{ \frac{2}{\beta^2+1} } \Big).
\end{align}
\end{lemma}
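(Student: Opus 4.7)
The plan is to exploit the spatial Markov decomposition together with the smoothness of the harmonic extension on the interior of $Q(x,R)$. Since $h^{(R)}$ is harmonic in $Q(x,R) \supset Q(x,r)$, its $\eps$-circle averages at interior points coincide with its values for sufficiently small $\eps$, so the limit definition \eqref{E:defmu} gives the pointwise factorisation $\mu(y) = e^{i\beta h^{(R)}(y)} \mu^{(R)}(y)$ on $Q(x,r)$, and analogously $\mu^*(y) = e^{i\beta h^{*(R)}(y)} \mu^{(R)}(y)$ in the whole-plane case. Writing $\tilde h(y) := h^{(R)}(y) - h^{(R)}(x)$, one has $\mu(Q(x,r)) = e^{i\beta h^{(R)}(x)} \int_{Q(x,r)} e^{i\beta \tilde h(y)}\, \mu^{(R)}(dy)$. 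Since $|e^{i\beta h^{(R)}(x)}|=1$, the reverse triangle inequality $\big||a|-|b|\big|\leq|a-b|$ reduces the claim to a tail bound on
\[
|Y| := \Big| \int_{Q(x,r)} (e^{i\beta \tilde h(y)} - 1)\, \mu^{(R)}(dy) \Big|.
\]

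Next, I would condition on $h^{(R)}$ and use independence from $\mu^{(R)}$. The pointwise estimate $|e^{i\beta a}-1|\leq\beta|a|$, the moment identity of Lemma \ref{L:moment_JSW} applied to $\mu^{(R)}$, and Lemma \ref{lemma:rectangle1} yield
\[
\E\!\left[|Y|^{2N} \,\middle|\, h^{(R)}\right] \leq \beta^{2N}\, \|\tilde h\|_\infty^{2N}\, \E|\mu^{(R)}(Q(x,r))|^{2N} \leq C^N\, \|\tilde h\|_\infty^{2N}\, r^{(4-\beta^2)N}\, N^{\beta^2 N/2}.
\]
To integrate over $h^{(R)}$, I would use that although $h^{(R)}$ itself has variance growing like $\log(R_0/R)$ near $\partial Q(x,R)$, the log-singularities of $G_D$ and $G_{Q(x,R)}$ cancel, so the smooth covariance $G_D - G_{Q(x,R)}$ has bounded mixed second derivatives on $Q(x,R/2)^2$; by a scaling argument this gives pointwise variance $\E|\nabla h^{(R)}(z)|^2 = O(R^{-2})$ uniformly on $Q(x,R/2)$ (universal in the whole-plane case via scale invariance of $\Gamma^*$; depending on $R_0$ in the Dirichlet case). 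Combining $|\tilde h(y)| \leq |y-x|\sup_{Q(x,R/2)}|\nabla h^{(R)}|$ with Borell--TIS applied to the dimensionless smooth field $R\nabla h^{(R)}$, one obtains $\E\|\tilde h\|_\infty^{2N} \leq C^N (r/R)^{2N} N^N$.

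Putting everything together gives $\E|Y|^{2N} \leq C^N r^{(4-\beta^2)N} (r/R)^{2N} N^{(2+\beta^2)N/2}$, and Markov's inequality with the optimal $N \asymp (\lambda R/r)^{4/(\beta^2+2)}$ then yields the sharpened tail
\[
\Prob{ |Y| > \lambda r^{2-\beta^2/2} } \leq \exp\!\Big(-c (\lambda R/r)^{4/(\beta^2+2)}\Big).
\]
A direct comparison of exponents shows this already implies the stated bound \eqref{eq:lem delete harmonic} in the regime $\lambda \leq (R/r)^{\beta^2/2}$. For the complementary regime $\lambda \geq (R/r)^{\beta^2/2}$, I would instead apply Lemma \ref{lemma:rectangle1} directly to $|\mu(Q(x,r))|$ and $|\mu^{(R)}(Q(x,r))|$, obtaining via Markov with the usual optimisation the classical tail $\Prob{|Y|>\lambda r^{2-\beta^2/2}} \leq C \exp(-c\lambda^{4/\beta^2})$; a short calculation confirms that this dominates the stated bound precisely in that range, and that the two regimes meet at $\lambda=(R/r)^{\beta^2/2}$ where both exponents equal $(R/r)^2$. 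The whole-plane estimate \eqref{eq:lem delete harmonic2} is handled identically, with the constants being universal since scale invariance of $\Gamma^*$ removes any dependence on $R_0$.

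The main technical obstacle is the Gaussian moment bound $\E\|\tilde h\|_\infty^{2N} \leq C^N (r/R)^{2N} N^N$: establishing that the gradient of the harmonic extension has pointwise variance of order $R^{-2}$ uniformly on $Q(x,R/2)$ despite the logarithmic blow-up of $\mathrm{Var}(h^{(R)}(z))$ near $\partial Q(x,R)$. This requires either a direct scaling/conformal invariance argument for the covariance $G_D - G_{Q(x,R)}$, or a Poisson-kernel representation of $h^{(R)}$ exploiting the distance $R/2$ from the boundary of $Q(x,R)$; once this variance bound is secured, the extension to $\E\|\tilde h\|_\infty^{2N}$ is standard Gaussian concentration.
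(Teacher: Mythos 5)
Your proposal is correct and follows the paper's overall skeleton (the factorisation $\mu=e^{i\beta h^{(R)}}\mu^{(R)}$ on $Q(x,r)$, the reverse triangle inequality reducing to $\int_{Q(x,r)}\mu^{(R)}(y)(e^{i\beta\tilde h(y)}-1)\,dy$, then $2N$-th moments plus Markov and optimisation in $N$), but the key step is executed differently. The paper never controls $\|\tilde h\|_\infty$: in \eqref{eq:proof lem delete harmonic4} it keeps the expectation over $h^{(R)}$ inside the moment formula and bounds it by a generalised Cauchy--Schwarz inequality by $\max_y \E\,|e^{i\beta\tilde h(y)}-1|^{2N}$, so only the pointwise variance bound \eqref{eq:proof_harmonic_covariance}, namely $\E\,\tilde h(y)^2\le C_{R_0}\,r/R$, is needed; this gives the single estimate $\big(C\tfrac{r}{R}\lambda^{-2}\big)^N N^{(\beta^2+1)N/2}$ and hence \eqref{eq:lem delete harmonic} in one stroke for the whole range of $\lambda$. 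You instead pull out $\beta^{2N}\|\tilde h\|_\infty^{2N}$ (legitimate, since the kernel in Lemma \ref{L:moment_JSW} is positive, and the bound $\E|\mu^{(R)}(Q(x,r))|^{2N}\le C^N r^{(4-\beta^2)N}N^{\beta^2N/2}$ with universal $C$ follows from Lemma \ref{lemma:rectangle1} after rescaling to the unit square), and you pay for it with the extra input $\E\|\tilde h\|_\infty^{2N}\le C^N(r/R)^{2N}N^N$, which requires a chaining/Dudley bound on $\sup|R\nabla h^{(R)}|$ in addition to Borell--TIS; this is true and standard (the paper itself invokes a local chaining inequality in the proof of Lemma \ref{L:dec23}), but it is the one step of your sketch that is not written out, and it is precisely the control the paper's trick avoids needing. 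In exchange you obtain the sharper increment variance $O((r/R)^2)$ (the paper's conformal-map expansion only yields $O(r/R)$) and hence the stronger tail $\exp(-c(\lambda R/r)^{4/(\beta^2+2)})$; your bookkeeping is correct that this implies the stated exponent exactly for $\lambda\le(R/r)^{\beta^2/2}$, that the crude tail $\exp(-c\lambda^{4/\beta^2})$ (from absolute-moment bounds on $|\mu(Q(x,r))|$ and $|\mu^{(R)}(Q(x,r))|$) dominates the stated bound for $\lambda\ge(R/r)^{\beta^2/2}$, and that the two exponents match at $(R/r)^2$ at the crossover, so the two-regime patching does deliver \eqref{eq:lem delete harmonic}, and the same argument with universal constants gives \eqref{eq:lem delete harmonic2}. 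In short: your route buys a better intermediate tail at the cost of a sup-norm Gaussian estimate and a case split; the paper's route is slightly cruder but self-contained, needing only the pointwise variance of $\tilde h$.
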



\begin{proof}[Proof of Lemma \ref{lem:delete harmonic}]
We start by proving \eqref{eq:lem delete harmonic} whose proof is divided into two main steps. Firstly, we will give quantitative estimates saying that $h^{(R)}$ does not vary too much on $Q(x,r)$. We will then use Markov's inequality and compute moments of integrals of the imaginary chaos. This has the advantage of transforming oscillating integrals into integrals of nonnegative functions which are easier to handle.

We start by proving the following claim: there exists $C_{R_0}>0$ which may depend on $R_0$ s.t.
\begin{equation}
\label{eq:proof_harmonic_covariance}
\sup_{y \in Q(x,r)} \Expect{ \left( h^{(R)}(y)-h^{(R)}(x) \right)^2}
\leq C_{R_0} \frac{r}{R}.
\end{equation}
An exact computation shows that for $y,z \in Q(x,r)$,
\begin{equation}
\label{eq:proof_harmonic_covariance1}
\Expect{\left( h^{(R)}(y) - h^{(R)}(x) \right)^2} = 2 G_{Q(x,R)}(x,y) - 2 G_D(x,y) + \log \frac{\CR(x,D) \CR(y,D)}{\CR(x,Q(x,R)) \CR(y,Q(x,R))}.
\end{equation}
Let $\varphi : D \to \D$ be a conformal map. By conformal invariance of the Green function,
\begin{align*}
& 2G_D(x,y) - \log (\CR(x,D) \CR(y,D)) \\
& = 2 \log \frac{\abs{1 - \overline{\varphi(x)}\varphi(y)}}{\abs{\varphi(x) - \varphi(y)}} + \log \left( \abs{\varphi'(x)} \abs{\varphi'(y)} \right) - \log \left( 1 - \abs{\varphi(x)}^2 \right) \left( 1 - \abs{\varphi(y)}^2 \right).
\end{align*}
In the following, we will make computations which are justified because $\varphi \in C^\infty(D)$ and $\varphi^{-1} \in C^\infty(\D)$. The error terms will thus depend on $R_0$.
We have
\[
\log \abs{ \varphi(x) - \varphi(y)} = \log \abs{\varphi'(x)} + \log \abs{x-y} + O(r)
\]
and also,
\[
\log \abs{1 - \overline{\varphi(x)}\varphi(y)} = \log \left( 1 - \abs{\varphi(x)}^2 \right) + O \left( \frac{\abs{y-x}}{1-\abs{\varphi(x)}^2} \right)
=
\log \left( 1 - \abs{\varphi(x)}^2 \right) + O(r).
\]
Hence
\[
2G_D(x,y) - \log (\CR(x,D) \CR(y,D))
=
\log \frac{1}{\abs{x-y}} + O(r).
\]
As $r<R/2$, $x$ and $y$ are in the bulk of $Q(x,R)$ and we can show similarly that
\[
2G_{Q(x,R)}(x,y) - \log (\CR(x,Q(x,R)) \CR(y,Q(x,R)))
=
\log \frac{1}{\abs{x-y}} + O\left( \frac{r}{R} \right).
\]
Coming back to \eqref{eq:proof_harmonic_covariance1}, it shows \eqref{eq:proof_harmonic_covariance}.

\medskip
We now come back to the core of the proof of \eqref{eq:lem delete harmonic}. In the rest of the proof, the constants may depend on $\beta$ and on $R_0$.
Let $\lambda >0$.
By the triangle inequality, the probability on the left hand side of \eqref{eq:lem delete harmonic} is at most
\begin{equation}
\label{eq:proof lem delete harmonic1}
\Prob{ \abs{ \int_{Q(x,r)} \mu^{(R)}(y) \left( e^{i\beta (h^{(R)}(y) - h^{(R)}(x)) } - 1 \right) dy } > \lambda r^{2-\frac{\beta^2}{2}}  }.
\end{equation}
Let $N \geq 1$.
Recall the notation $\Ec'(\Gamma^{(R)};\mathbf{y};\mathbf{z})$ defined in \eqref{E:defEcal'}.
As $\mu^{(R)}$ and $h^{(R)}$ are independent, we have by Lemma \ref{L:moment_JSW},
\begin{align}
& \Expect{ \abs{ \int_{Q(x,r)} \mu^{(R)}(y) \left( e^{i\beta (h^{(R)}(y) - h^{(R)}(x))} - 1 \right) dy }^{2N} } \label{eq:proof lem delete harmonic4}\\
& = \int_{Q(x,r)^N \times Q(x,r)^N} d \mathbf{y} \,d\mathbf{z} \,e^{\beta^2 \Ec'(\Gamma^{(R)};\mathbf{y};\mathbf{z})} 
\Expect{ \prod_{j=1}^N \left( e^{i\beta (h^{(R)}(y_j) - h^{(R)}(x))} - 1 \right) \left( e^{-i\beta (h^{(R)}(z_j) - h^{(R)}(x))} - 1 \right)  }. \nonumber
\end{align}
By a generalised Cauchy--Schwarz inequality, we can bound the expectation on the right hand side of the above display by
\[
\max_{y \in Q(x,r)} \Expect{ \abs{ e^{i\beta (h^{(R)}(y) - h^{(R)}(x))} - 1 }^{2N} }.
\]
Denoting $\Gamma^{Q(0,1)}$ the GFF in the unit square, we deduce by scaling that \eqref{eq:proof lem delete harmonic4} is at most
\begin{align}
& C^N R^{(2-\beta^2/2)2N}  \max_{y \in Q(x,r)} \Expect{ \abs{ e^{i\beta (h^{(R)}(y) - h^{(R)}(x))} - 1 }^{2N} }
\int_{Q(0,\frac{r}{R})^N \times Q(0,\frac{r}{R})^N} d \mathbf{y} \,d\mathbf{z} \,e^{\beta^2 \Ec'(\Gamma^{Q(0,1)};\mathbf{y};\mathbf{z})}. \nonumber
\end{align}
Using \eqref{eq:proof_harmonic_covariance}, we have
\begin{equation}
\label{eq:proof_lem_delete_harmonic_5}
\max_{y \in Q(x,r)} \Expect{ \abs{ e^{i\beta (h^{(R)}(y) - h^{(R)}(x))} - 1 }^{2N} }
\leq C^N \max_{y \in Q(x,r)} \Expect{ \abs{h^{(R)}(y) - h^{(R)}(x)}^{2N} }
\leq (N-1)!! \left( C \frac{r}{R} \right)^N,
\end{equation}
where $(N-1)!!$ is the double factorial of $N-1$, i.e. the product of all positive integers smaller than $N-1$ with the same parity as $N-1$.
Moreover, by Lemmas \ref{L:bound_Ec} and \ref{L:integral} and by scaling, we can bound
\begin{align*}
& \int_{Q(0,\frac{r}{R})^N \times Q(0,\frac{r}{R})^N} d \mathbf{y} \,d\mathbf{z} \,e^{\beta^2 \Ec'(\Gamma^{Q(0,1)};\mathbf{y};\mathbf{z})}
\leq
C^N \int_{Q(0,\frac{r}{R})^{2N}} d \mathbf{y} \prod_{j=1}^{2N} \left( \min_{k \neq j} \abs{y_k - y_j} \right)^{-\beta^2/2} \\
& \qquad =
C^N \left( \frac{r}{R} \right)^{(2-\beta^2/2)2N}
\int_{Q(0,1)^{2N}} d \mathbf{y}  \prod_{j=1}^{2N} \left( \min_{k \neq j} \abs{y_k - y_j} \right)^{-\beta^2/2} \\
& \qquad \leq C^N N^{N \beta^2/2} \left( \frac{r}{R} \right)^{(2-\beta^2/2)2N}.
\end{align*}
Putting things together, it implies that there exists $C>0$ such that the expectation in \eqref{eq:proof lem delete harmonic4} is not larger than
\begin{equation} \label{eq:proof lem delete harmonic5}
(N-1)!! N^{N\beta^2/2} \left( C \frac{r}{R} r^{(2-\beta^2/2)2} \right)^N.
\end{equation}
By Markov's inequality and using that $(N-1)!! \leq C^N N^{N/2}$, we deduce that the probability on the left hand side of \eqref{eq:lem delete harmonic} is at most
\[
\Big( C \frac{r}{R} \frac{1}{\lambda^2} \Big)^N N^{(\beta^2+1)N/2}.
\]
We optimise in $N$ and choose $N = \floor{ e^{-1} (\lambda^2 \frac{R}{r})^{2/(\beta^2+1)} }$ which leads to the estimate on the right hand side of \eqref{eq:lem delete harmonic}.
It concludes the proof of \eqref{eq:lem delete harmonic}.

The proof of \eqref{eq:lem delete harmonic2} about the whole-plane GFF $\Gamma^*$ is simpler since there is no boundary to deal with and follows from the same lines.
\end{proof}

\begin{lemma}\label{lem:tail}
Let $0<2r<R$ and denote by $\mu^{(R)}$ the imaginary chaos associated to the GFF in $Q(0,R)$. There exists $c>0$ which may depend on $\beta$ s.t.
\begin{align}
\label{eq:lem_tail}
& \abs{ \Prob{ \abs{ \mu^{(R)}(Q(0,r)) } > r^{2-\beta^2/2} t } - \exp \left( - (1+o(1)) c^*(\beta) t^{4/\beta^2} \right) }
 \leq \frac{1}{c} \exp \left( - c \left( \frac{R}{r}\right)^{\frac{2}{\beta^2+1}} \right),
\end{align}
where $c^*(\beta)>0$ is the constant from Theorem \ref{T:LSZ} and where $o(1) \to 0$ when $t \to \infty$, uniformly in $r,R$ as above.
\end{lemma}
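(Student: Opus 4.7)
The strategy is to transfer the known tail \eqref{eq:rem tail} for the whole-plane chaos $\mu^*$ to $\mu^{(R)}$ by coupling the two objects via Lemma~\ref{lem:delete harmonic}~\eqref{eq:lem delete harmonic2}, and to carefully quantify the resulting approximation error.

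First, \eqref{eq:lem delete harmonic2} guarantees that, for every $\lambda \geq (r/R)^{1/2}$, the quantities $|\mu^*(Q(0,r))|$ and $|\mu^{(R)}(Q(0,r))|$ differ by at most $\lambda r^{2-\beta^2/2}$ outside an event of probability at most $p_{\mathrm{bad}}(\lambda) := \tfrac{1}{c}\exp\bigl(-c(R\lambda^2/r)^{2/(\beta^2+1)}\bigr)$. Second, I would exploit scale and translation invariance of the whole-plane GFF modulo additive constants: a direct change of variables in the defining limit for $\mu^*$ yields $\mu^*(Q(0,r)) \stackrel{d}{=} r^{2-\beta^2/2} e^{i\beta N} \mu^*(Q(0,1))$ for some (a.s.\ finite) additive Gaussian constant $N$, so that after taking absolute values
\[
|\mu^*(Q(0,r))| \stackrel{d}{=} r^{2-\beta^2/2}\, |\mu^*(Q(0,1))|.
\]
Combined with \eqref{eq:rem tail}, this gives $\Prob{|\mu^*(Q(0,r))| > r^{2-\beta^2/2} s} = \exp(-(1+o(1))c^*(\beta) s^{4/\beta^2})$ as $s \to \infty$, uniformly in $r$.

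Third, the elementary triangle inequality $\bigl||a|-|b|\bigr| \leq |a-b|$ sandwiches $\Prob{|\mu^{(R)}(Q(0,r))|>r^{2-\beta^2/2}t}$ between $\Prob{|\mu^*(Q(0,r))|>r^{2-\beta^2/2}(t+\lambda)} - p_{\mathrm{bad}}(\lambda)$ and $\Prob{|\mu^*(Q(0,r))|>r^{2-\beta^2/2}(t-\lambda)} + p_{\mathrm{bad}}(\lambda)$. For $t \geq 1$ I would then take $\lambda = \sqrt{t}$: on the one hand $p_{\mathrm{bad}}(\sqrt{t}) \leq \tfrac{1}{c}\exp(-c(R/r)^{2/(\beta^2+1)})$ (matching the target RHS), and on the other $(t \pm \sqrt{t})^{4/\beta^2} = t^{4/\beta^2}(1 + O(t^{-1/2}))$, so both sandwich endpoints equal $\exp(-(1+\tilde o(1))c^*(\beta) t^{4/\beta^2})$ for a new $\tilde o(1)\to 0$ as $t \to \infty$, uniformly in $r,R$. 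For the residual range of bounded $t$ the bound holds either trivially (after possibly shrinking $c$ so that $\tfrac{1}{c}\exp(-c(R/r)^{2/(\beta^2+1)})$ exceeds $1$) or, when $R/r$ is large, by appealing to the boundedness of the density of $|\mu^*(Q(0,1))|$ (a consequence of the Malliavin-calculus argument of \cite{aru2022density}) to control $F(t-\lambda) - F(t+\lambda)$ directly with a smaller $\lambda$.

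The main technical obstacle is the calibration of $\lambda$: the coupling cost $p_{\mathrm{bad}}(\lambda)$ forces $\lambda \gtrsim 1$ in order to achieve the target rate $\exp(-c(R/r)^{2/(\beta^2+1)})$, while tail matching requires $(t \pm \lambda)^{4/\beta^2} - t^{4/\beta^2} = o(t^{4/\beta^2})$, hence $\lambda = o(t)$. The window $1 \lesssim \lambda \ll t$ opens precisely in the regime $t \to \infty$, which is exactly where the statement carries its meaningful content, and $\lambda = \sqrt{t}$ threads this needle.
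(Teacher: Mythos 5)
Your proposal is correct and takes essentially the paper's route: couple $\mu^{(R)}$ with the whole-plane chaos via Lemma \ref{lem:delete harmonic} \eqref{eq:lem delete harmonic2}, shift the threshold by $\lambda$, and conclude from the exact scale invariance of $|\mu^*|$ together with \eqref{eq:rem tail}; the paper merely rescales to $r=1$ first and takes the fixed shift $\lambda=1$, which already matches the error term, so your calibration $\lambda=\sqrt{t}$ works but is not needed. One small caveat: the parenthetical fallback of shrinking $c$ so that $\frac{1}{c}\exp\bigl(-c(R/r)^{2/(\beta^2+1)}\bigr)$ exceeds $1$ cannot hold uniformly in $R/r$, but that remark is superfluous since your sandwich argument (with the $o(1)$ chosen appropriately for bounded $t$) already covers the full range.
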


\begin{proof}[Proof of Lemma \ref{lem:tail}]
Let $\eps>0$.
The probability in \eqref{eq:lem_tail} being equal to
\[
\Prob{ \abs{ \mu^{(R/r)}(Q(0,1))} > t },
\]
we can assume without loss of generality that $r=1$ and $R > 2$.
The tail estimate \eqref{eq:rem tail} concerns the whole-plane GFF $\Gamma^*$ that we write as the independent sum $\Gamma^{(R)} + {h^*}^{(R)}$
where $\Gamma^{(R)}$ is the GFF in $Q(0,R)$ with Dirichlet boundary condition and ${h^*}^{(R)}$ is harmonic in $Q(0,R)$.  By Lemma \ref{lem:delete harmonic}, there exists $c=c(\beta)>0$ s.t.
\begin{align*}
& \Prob{ \abs{ \abs{ \mu^{(R)}(Q(0,1)) } - \abs{\mu^*(Q(0,1)) } } > 1 } \leq \frac{1}{c} \exp \left( - c R^{\frac{2}{\beta^2+1}} \right).
\end{align*}
Hence we have the following upper bound
\[
\Prob{ \abs{ \mu^{(R)}(Q(0,1)) } > t } 
\leq \Prob{ \abs{ \mu^*(Q(0,1)) } > t - 1 } + \frac{1}{c} \exp \left( - c R^{\frac{2}{\beta^2+1}} \right)
\]
and an analogous lower bound. Together with \eqref{eq:rem tail}, this shows \eqref{eq:lem_tail}.
\end{proof}

\section{Hölder regularity and monofractality: proof of Proposition \ref{prop:Holder regularity} and Theorem~\ref{th:monofractality}}\label{S:monofractality}

We start this section by proving Proposition \ref{prop:Holder regularity}.

\begin{proof}[Proof of Proposition \ref{prop:Holder regularity}]
Fix a compact subset $K$ of $D$ and $N \geq 1$. If $A = Q(x,r)$ and $A'=Q(x',r')$ are two squares included in $K$, we have
\begin{align*}
\Expect{ \abs{ \mu(A) - \mu(A') }^{2N} }
& =
\Expect{ \abs{ \mu(A \backslash A \cap A') - \mu(A' \backslash A \cap A') }^{2N} } \\
& \leq 2^{2N-1} \Expect{ \abs{ \mu(A \backslash A \cap A')}^{2N} + \abs{ \mu(A' \backslash A \cap A')}^{2N}}.
\end{align*}
$A \backslash A \cap A'$ and $A' \backslash A \cap A'$ can be covered by finitely many $a \times b$ rectangles with either $a \leq 2(r \vee r')$ and $b \leq C \norme{(x,r)-(x',r')}$, or the bounds swapped for $a$ and $b$.
By Lemma \ref{lemma:rectangle1}, we deduce that
\begin{align*}
\Expect{ \abs{ \mu(A) - \mu(A') }^{2N} }
& \leq C_{K,N} \left\{ (r \vee r') \norme{(x,r) - (x',r')} \right\}^{(2-\beta^2/2) N}.
\end{align*}
With the help of Lemma \ref{lem:Kolmogorov} which is a slight modification of Kolmogorov's continuity theorem, it shows that $(x,r) \mapsto \mu(Q(x,r))$ restricted to the set $\{ (x,r) : Q(x,r) \subset K \}$ possesses a continuous modification which satisfies \eqref{eq:prop_Holder}. It concludes the proof.
\end{proof}

We now turn to the proof of Theorem \ref{th:monofractality}.

\begin{proof}[Proof of Theorem \ref{th:monofractality}]
First, by applying \eqref{eq:prop_Holder} to $(x,r)$ and $(x',r')=(x,0)$, we see that for all compact subset $K$ of $D$, $\alpha < 1-\beta^2/4$, there exists a random constant $C_{K,\alpha}$ finite a.s. such that for all $Q(x,r) \subset K$,
\[ 
|\mu(Q(x,r))| \le C_{K,\alpha} r^{2\alpha}.
\]
The lower bound of \eqref{eq:th_monofractality} then follows. It thus only remains to prove the upper bound of \eqref{eq:th_monofractality}.
Let $\Gamma^*$ be the whole-plane GFF and let $\mu^*$ be the associated imaginary chaos. As with the the GFF $\Gamma$ in the domain $D$, we can make sense of $\mu^*(Q(x,r))$ simultaneously for all $x \in \C$ and $r>0$. Moreover, for all compact sets $K \subset \C$ and $\alpha < 1-\beta^2/4$, there exists a random constant $C_{K,\alpha}$ almost surely finite such that for all $Q(x,r), Q(x',r') \subset K$,
\begin{equation}
\label{eq:proof_th_mono1}
\abs{ \mu^*(Q(x,r)) - \mu^*(Q(x',r')) } \leq C_{K,\alpha} (r \vee r')^\alpha \norme{ (x,r) - (x',r') }^\alpha.
\end{equation}
We are going to prove that almost surely, simultaneously for all $x \in D$,
\begin{equation}
\label{eq:proof_th_mono}
\liminf_{r \to 0} \frac{\log \abs{\mu^*(Q(x,r))}}{ \log r} \leq 2 - \beta^2/2.
\end{equation}
It will be convenient to consider the whole-plane GFF instead of $\Gamma$ because of its translation and scaling invariance.
We now explain how we conclude from \eqref{eq:proof_th_mono} the proof of the upper bound of \eqref{eq:th_monofractality}. By spatial Markov property (\cite[Proposition 2.8]{ImaginaryGeometryIV}), we can couple the two GFFs $\Gamma^*$ and $\Gamma$ such that $\Gamma^* = \Gamma + h$ and such that $\Gamma$ and $h$ are independent and $h$ is harmonic in $D$. We claim that the map
\[
\begin{array}{ccc}
\{ (x,r) \in D \times [0,1], Q(x,r) \subset D \} & \to & \C \\
(x,r) & \mapsto & e^{-i \beta h(x) } \mu^*(Q(x,r)) - \mu(Q(x,r))
\end{array}
\]
possesses a continuous modification. Moreover, for all compact subsets $K$ of $D$ and $\alpha < 3-\beta^2/2$, there exists a random constant $C_{K,\alpha}$ almost surely finite such that for all $Q(x,r) \subset K$,
\[
\abs{ e^{-i \beta h(x) } \mu^*(Q(x,r)) - \mu(Q(x,r)) } \leq C_{K,\alpha} r^{\alpha}.
\]
The proof of this claim is very similar to the proof of Proposition \ref{prop:Holder regularity} with the additional ingredient (which is similar to \eqref{eq:proof_harmonic_covariance}) that for all compact subsets $K$ of $D$, there exists $C_K >0$ such that for all $Q(x,r) \subset K$,
\[
\sup_{y \in Q(x,r)} \Expect{(h(x) - h(y))^2 } \leq C_K r.
\]
We omit the details of the proof of this claim. Since $\alpha$ can be chosen larger than $2-\beta^2/2$, we now see that this claim and \eqref{eq:proof_th_mono} imply the desired upper bound of \eqref{eq:th_monofractality}.

The rest of the proof will consist in proving \eqref{eq:proof_th_mono}. Let us denote by
\[
\xi = 2-\beta^2/2.
\]
Let $\delta >0$ and $K$ be an integer larger than $1/\delta-1$. We are going to bound the probability of the event that there exists $x \in D$ such that
\begin{equation*}
\liminf_{r \to 0} \frac{\log \abs{\mu^*(Q(x,r))}}{\log r} \geq \xi+ 2\delta.
\end{equation*}
For such an $x$, we have for small enough $r>0$,
\begin{equation}
\label{eq:proof_thm_mono}
\abs{\mu^*(Q(x,r))} \leq r^{\xi+\delta}.
\end{equation}
Let $p \geq 1$, $z \in 2^{-2p}\Z^2$ be such that $\abs{z-x} \leq 2^{-2p}$. For all $j = 0, \dots, 2K$, define $r_j := 2^{-p -j}$. By \eqref{eq:proof_th_mono1} with $\alpha > (\xi+\delta)/3$, there exists a random constant $C(\omega)$ a.s. finite such that for all $j = 0, \dots, 2K$,
\begin{align*}
\abs{\mu^*(Q(z,r_j)) - \mu^*(Q(x,r_j))} \leq C(\omega) (r_j 2^{-2p})^\alpha \leq C(\omega) r_j^{3\alpha} \leq r_j^{\xi+\delta}
\end{align*}
if $p$ is large enough. With \eqref{eq:proof_thm_mono}, it implies that if $p$ is large enough, for all $j = 0, \dots, 2K$,
\[
\abs{\mu^*(Q(z,r_j))} \leq 2 r_j^{\xi+\delta}.
\]
To put in a nutshell, we have shown that, up to an event of vanishing probability,
\begin{align}
& \left\{ \exists x \in D, \liminf_{r \to 0} \frac{\log \abs{\mu^*(Q(x,r))}}{\log r} \geq \xi+ 2\delta \right\} \nonumber \\
& \subset \bigcup_{n} \bigcap_{p \geq n} \bigcup_{\substack{z \in 2^{-2p}\Z^2\\z \in D}} \left\{ \forall j = 0, \dots, 2K, \abs{\mu^*(Q(z,r_j))} \leq 2 r_j^{\xi+\delta} \right\} \nonumber \\
& \subset \bigcap_{n} \bigcup_{p \geq n} \bigcup_{\substack{z \in 2^{-2p}\Z^2\\z \in D}} \left\{ \forall j = 0, \dots, 2K, \abs{\mu^*(Q(z,r_j))} \leq 2 r_j^{\xi+\delta} \right\} \label{eq:goal}
\end{align}
the last inclusion being elementary and not specific to the events considered here.
The goal is now to show that the probability of the event \eqref{eq:goal} vanishes. Let $p$ be a large integer and $z \in 2^{-2p} \Z^2 \cap D$. By translation and scaling invariance,
\begin{align*}
\Prob{\forall j = 0, \dots, 2K, \abs{\mu^*(Q(z,r_j))} \leq 2 r_j^{\xi+\delta} }
& = \Prob{ \forall j = 0, \dots, 2K, \abs{\mu^*(Q(0,2^{-j}))} \leq 2 \cdot 2^{-p \delta} 2^{-(\xi + \delta)j} }.
\end{align*}
In Lemma \ref{L:dec23} below the current proof, we show that the above right hand side term is at most $C_K 2^{-2p \delta (K+1)}$ for some $C_K>0$ independent of $p$.
Since $K+1 > 1/\delta$, a union bound concludes that the probability of the event \eqref{eq:goal} vanishes. It concludes the proof.
\end{proof}

\begin{lemma}\label{L:dec23}
Let $\mu^*$ be the imaginary chaos associated to the whole-plane GFF. For all integers $K \geq 1$, there exists $C_K>0$ such that for all $\eps >0$,
\[
\Prob{\forall j=0, \dots, 2K, |\mu^*(Q(0,2^{-j}))| \leq \eps} \leq C_K \eps^{2(K+1)}.
\]
\end{lemma}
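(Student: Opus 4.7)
The plan is to retain only $K+1$ well-separated scales from the $2K+1$ available, and then reduce the lemma to a joint small-ball estimate that follows from Theorem \ref{T:density}. Indeed,
\[
\{\forall j = 0, \ldots, 2K,\; |\mu^*(Q(0, 2^{-j}))| \leq \eps\} \subset \bigcap_{k=0}^K \{|\mu^*(Q(0, 4^{-k}))| \leq \eps\},
\]
so it suffices to bound the probability of the right-hand side by $C_K \eps^{2(K+1)}$. Using Lemma \ref{lem:delete harmonic2} with a fixed scale $R = 4$ and a union bound over $k$, I would first pass, on a high-probability event, to the analogous quantities $|\mu^{(R)}(Q(0, 4^{-k}))|$ defined from the Dirichlet GFF in $Q(0, R)$; the resulting error term is easily absorbed.

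The heart of the proof is then to invoke Theorem \ref{T:density}, which is designed to produce quantitative joint density bounds for imaginary chaos tested against several test functions simultaneously. Applied to the $K+1$ indicators $f_k := \mathbf{1}_{Q(0, 4^{-k})}$ for $k = 0, \ldots, K$, it gives that the complex random vector
\[
V := \bigl(\mu^{(R)}(Q(0, 4^{-k}))\bigr)_{k=0, \ldots, K} \in \C^{K+1}
\]
admits a joint density on $\C^{K+1}$ bounded by some constant $\tilde C_K$ depending only on $K$ and $\beta$. Integrating this density over the product of $\eps$-balls yields
\[
\Prob{\forall k,\; |\mu^{(R)}(Q(0, 4^{-k}))| \leq \eps} \leq \tilde C_K (\pi \eps^2)^{K+1} = C_K \eps^{2(K+1)},
\]
which, combined with the first step, concludes the argument.

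The main obstacle lies in justifying the joint density bound. Since the marginal density of $\mu^{(R)}(Q(0, r))$ near $0 \in \C$ is of order $r^{-2(2-\beta^2/2)}$ (the typical magnitude of $|\mu^{(R)}(Q(0, r))|$ being $r^{2-\beta^2/2}$), the joint density of $V$ at the origin must absorb a factor of $\prod_{k=0}^K 4^{2k(2-\beta^2/2)}$, which is a large but $K$-dependent constant, consistent with our target. Establishing this amounts to a quantitative lower bound on the determinant of the Malliavin covariance matrix of $V$, which is delicate because the test functions $f_k$ are nested rather than being disjointly supported. This is precisely the kind of simultaneous multi-scale control that Theorem \ref{T:density} is engineered to supply, refining the Malliavin-based density argument of \cite{aru2022density}.
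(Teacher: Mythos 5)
There is a genuine gap at the heart of your argument: Theorem \ref{T:density} is a \emph{one-dimensional} density bound. It says that for a single (possibly complex-valued) test function $f$ the complex random variable $\mu(f)$ has a density on $\C$ bounded by $C\,c_1(f)^{-8}\norme{f}_\infty^{-2}$; it says nothing about the joint density of a vector such as $V=(\mu^{(R)}(Q(0,4^{-k})))_{k=0,\dots,K}\in\C^{K+1}$. The phrase about ``treating several simultaneous test functions at once'' refers only to the constant being quantitative in $f$, which lets one apply the one-dimensional bound to several \emph{random} test functions; it is not a multi-dimensional statement. You acknowledge that the needed joint bound amounts to a non-degeneracy estimate for the full Malliavin covariance matrix of $V$ with nested test functions, but then assert the theorem supplies it — it does not, and producing it would be a substantial new argument. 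The paper avoids this entirely: from $\abs{\mu^*(Q_j)}\leq\eps$ for all $j$ it passes to the \emph{disjoint annuli} $Q_j\setminus Q_{j+1}$ (keeping only even $j$ so they are well separated), applies the Markov property in disjoint neighbourhoods $B_j$ of the annuli to obtain conditional independence given the harmonic part $h$, and then uses the one-dimensional Theorem \ref{T:density} with the random test functions $f_j=e^{i\beta h}\indic{\cdot\in A_j}$, finishing with H\"older's inequality and the finiteness of negative moments of $c_1(f_j)$. That reduction to conditionally independent one-dimensional small-ball estimates is the missing idea in your proposal.

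Your preliminary reduction is also flawed. Estimate \eqref{eq:lem delete harmonic2} only controls the event that $\abs{\,\abs{\mu^*(Q(0,r))}-\abs{\mu^{(R)}(Q(0,r))}\,}$ exceeds $\lambda r^{2-\beta^2/2}$ with $\lambda\geq (r/R)^{1/2}$, i.e.\ a discrepancy of a \emph{fixed} size once $r$ and $R$ are fixed, and the exceptional probability $\frac1c\exp(-c(R\lambda^2/r)^{2/(\beta^2+1)})$ is a fixed constant independent of $\eps$. Since Lemma \ref{L:dec23} must hold for all $\eps>0$, with $\eps$ eventually much smaller than $(r/R)^{1/2}r^{2-\beta^2/2}$, the event $\{\abs{\mu^*(Q(0,4^{-k}))}\leq\eps\}$ cannot be transferred to a ball of radius comparable to $\eps$ for $\mu^{(R)}$, and the additive error probability does not vanish as $\eps\to0$; so the claimed bound $C_K\eps^{2(K+1)}$ is lost already at this step. (This is why the paper keeps $\mu^*$ and instead conditions via the Markov property rather than comparing to a Dirichlet chaos at fixed precision.)
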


Before proving Lemma \ref{L:dec23}, we need to collect a density result following from \cite{aru2022density}.
This result will feature a quantity $c_1(f)$ associated to some complex-valued function $f : D \to \C$ that we define now.
For $\delta >0$, let $\mathcal{Q}_\delta$ be the collection of cubes of the form
\begin{equation}
\label{E:Qdelta}
[4k_1 \delta, (4k_1+1) \delta] \times [4k_2 \delta, (4k_2+1) \delta]
\end{equation}
with $k_1, k_2 \in \Z$.
Consider also the following cones of angle $\pi/4$: 
\begin{equation}
\label{E:cone}
\text{Cone}_k = \{ z \in \C: \arg z \in [k\pi/8,\pi/4+8k\pi/8]\},
\end{equation}
for $k=0, \dots, 15$.
$c_1(f)$ is then defined by
\begin{equation}\label{E:c1f}
c_1(f) := \liminf_{\delta \to 0} \delta^2 \# \{ Q \in \mathcal{Q}_\delta: \exists k \in \{0, \dots, 15\}, \forall x \in Q,  f(x) \in \text{Cone}_k \text{ and } |f(x)| \geq \norme{f}_\infty/2 \}.
\end{equation}
Notice that $c_1(f)>0$ for any nonzero continuous function $f$.

\begin{theorem}\label{T:density}
Let $D$ be a bounded simply connected domain and $\Gamma$ be the GFF in $D$ with Dirichlet boundary conditions. Let $K$ be a compact subset of $D$ and $f: D \to \C$ be a bounded measurable function supported in $K$. Let $\mu(f)$ be the random variable defined by
\[
\mu(f) = \lim_{\eps \to 0} \int_D \eps^{-\beta^2/2} e^{i \beta \Gamma_\eps(x)} f(x) d x,
\]
where $\Gamma_\eps$ is the circle average approximation of $\Gamma$.
Then the law of $\mu(f)$ has a smooth density $\rho$ with respect to Lebesgue measure on $\C$. Moreover, there exists $C>0$ that may depend on $D$, $K$ and $\beta$ but not on $f$ such that $\norme{\rho}_\infty \leq C c_1(f)^{-8} \norme{f}_\infty^{-2}$.
\end{theorem}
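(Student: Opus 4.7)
The plan is to apply Malliavin calculus, revisiting the proof of the density result from \cite{aru2022density}, while carefully tracking how the constants depend on $f$. The role of the combinatorial quantity $c_1(f)$ is precisely to encode a quantitative non-degeneracy of $\mu(f)$ as a $\C$-valued random variable: on each cube $Q\in\mathcal{Q}_\delta$ where $f$ stays in a single cone and $|f|\ge \|f\|_\infty/2$, the local chaos contributes some non-vanishing ``randomness'' in both the $\Re$ and $\Im$ directions.

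Concretely, I would work with the circle-average regularisations $X_\eps = \Re \mu_\eps(f)$, $Y_\eps = \Im \mu_\eps(f)$, which are smooth cylinder functionals on the Wiener space of $\Gamma$ with explicit Malliavin derivatives taking values in the Cameron--Martin space $H^1_0(D)$. For example,
\[
D_y X_\eps = -\beta\eps^{-\beta^2/2}\int_D f(x)\sin(\beta \Gamma_\eps(x))\, G_D^\eps(x,y)\,dx,
\]
and similarly for $Y_\eps$. The standard Malliavin density formula for an $\R^2$-valued functional then gives an inequality of the form
\[
\|\rho\|_\infty \le C\,\bigl\|(\det\gamma)^{-1}\bigr\|_{L^{p_1}}^{a_1}\,\|X\|_{\mathbb{D}^{2,p_2}}^{a_2}\,\|Y\|_{\mathbb{D}^{2,p_3}}^{a_3},
\]
where $\gamma$ is the Malliavin covariance matrix of $(X,Y)$. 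The Malliavin--Sobolev norms $\|X_\eps\|_{\mathbb{D}^{2,p}}$ and $\|Y_\eps\|_{\mathbb{D}^{2,p}}$ can be controlled by $C\|f\|_\infty$ uniformly in $\eps$, using the moment bounds of Lemmas~\ref{L:moment_JSW} and \ref{lemma:rectangle1} applied to $|f|$ and the analogous bounds for the derivatives (which are themselves imaginary chaoses tested against modified functions).

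The main obstacle is the $L^{-p}$ control of $\det\gamma$ in terms of $c_1(f)$ and $\|f\|_\infty$: with the exponents in the density formula above, one aims at
\[
\det\gamma \;\gtrsim\; c_1(f)^{s_1}\,\|f\|_\infty^{s_2}
\]
with $s_1, s_2$ arranged so that the final product is exactly $c_1(f)^{-8}\|f\|_\infty^{-2}$. The strategy is to fix a small $\delta > 0$, and consider the $\sim c_1(f)/\delta^2$ cubes $Q \in \mathcal{Q}_\delta$ from \eqref{E:Qdelta} on which $f$ lies in a single cone \eqref{E:cone} with $|f|\ge \|f\|_\infty/2$. By construction these cubes are pairwise separated by at least $3\delta$, so by the spatial Markov decomposition of Lemma~\ref{lem:delete harmonic} the restrictions of $\Gamma$ to different cubes are, up to a harmonic correction with controllable fluctuations, independent. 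On each such $Q$, the cone-and-amplitude constraints on $f$ prevent the vector $(\Re,\Im)$ of the local integral $\int_Q f e^{i\beta\Gamma}$ from concentrating on a line in $\R^2$, so the local $2\times 2$ contribution to the Malliavin matrix has determinant $\gtrsim \delta^4\|f\|_\infty^2$ with high probability.

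Summing the diagonal contributions of the $\sim c_1(f)/\delta^2$ nearly-independent cubes, controlling the off-diagonal terms by an independence/symmetrisation argument in the spirit of the later sections of the paper, and passing to the liminf $\delta\to 0$ from \eqref{E:c1f}, yields the required lower bound for $\det\gamma$. The hardest part is not conceptual but technical: doing all this with estimates that are uniform in $\eps$, so that the $L^{-p}$ bounds for $\det\gamma_\eps$ pass to the limit $\eps\to 0$ and yield a smooth density for the actual limit $\mu(f)$, rather than just for its regularisation.
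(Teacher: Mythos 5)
Your overall direction (Malliavin calculus, the cubes of $\mathcal{Q}_\delta$ and the cones from \eqref{E:cone}, Markov-property independence between well-separated cubes) is in the same family as the actual argument, but your plan re-derives the non-degeneracy from scratch via the full $2\times 2$ Malliavin covariance matrix, and that is precisely where there is a genuine gap. The step ``on each good cube the local contribution to $\gamma$ has determinant $\gtrsim \delta^4\|f\|_\infty^2$ with high probability'' is asserted, not proved, and it does not follow from the cone condition: that condition is a deterministic constraint on the test function, whereas near-degeneracy of $\gamma$ (near-collinearity of $D\,\Re\,\mu_\eps(f)$ and $D\,\Im\,\mu_\eps(f)$) is driven by the randomness of $\Gamma$, which rotates the local phases arbitrarily. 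Moreover, a ``high probability'' lower bound is not sufficient for the density formula you invoke: you need $\E[(\det\gamma_\eps)^{-p}]<\infty$ for every $p$, uniformly in $\eps$, i.e.\ small-ball estimates for $\det\gamma_\eps$ at all polynomial orders, and nothing in the sketch produces these — this quantitative non-degeneracy is the technical heart of \cite{aru2022density}, not a routine add-on. Finally, the exponents are reverse-engineered: you say $s_1,s_2$ can be ``arranged'' so that the output is exactly $c_1(f)^{-8}\|f\|_\infty^{-2}$, but the factor $\|f\|_\infty^{-2}$ is pure scaling (reduce to $\|f\|_\infty=1$), so the content of the statement is the exponent $8$ on $c_1(f)$, and your scheme gives no control over which exponent actually comes out of the chosen $L^{p_i}$ norms.

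For comparison, the paper does not redo the analysis at all: it reruns the proof of \cite[Theorem 3.6]{aru2022density}, whose non-degeneracy input is a lower bound on $|\langle DM, h_\delta\rangle_H|$ for a single, carefully chosen Cameron--Martin direction $h_\delta$. With that choice the pairing is a sum over $Q\in\mathcal{Q}_\delta$ of terms that are individually nonnegative, so one may simply discard all but the good squares; the only role of the cone condition is to guarantee that $\Re\big(f(x)\overline{f(y)}\big)\geq \|f\|_\infty^2\cos(\pi/4)/4$ on $Q\times Q$ for such squares (so the nonnegativity and size persist for complex-valued, merely measurable $f$), and the count of good squares is at least $c_1(f)\delta^{-2}/2$ by \eqref{E:c1f}, giving a lower bound scaling like $c_1(f)$; the exponent $8$ then comes directly from Lemma 3.4 and Proposition 3.5 of \cite{aru2022density}. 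If you want to pursue your determinant-based route you must supply the negative-moment estimate for $\det\gamma_\eps$ yourself (including the uniformity in $\eps$) and verify that the resulting exponent on $c_1(f)$ does not exceed $8$; as written, the hardest step of the theorem is assumed rather than proved.
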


\begin{proof}[Proof of Theorem \ref{T:density}]
This is a version of \cite[Theorem 3.6]{aru2022density} that is quantitative in terms of the test function $f$. It follows directly from \cite{aru2022density} by carefully recording the dependence of $f$. We now give a few details. First of all, the imaginary chaos considered in \cite{aru2022density} is defined via a different approximation procedure where the normalisation is done using $e^{\frac{\beta^2}{2} \text{var}(\Gamma_\eps(x))}$ instead of $\eps^{-\beta^2/2}$. This effectively changes the test function $f$ to $\tilde{f} : x \mapsto f(x) \text{CR}(x,D)^{-\beta^2/2}$ where $\text{CR}(x,D)$ stands for the conformal radius of $D$ seen from $x$. Since $c_1(\tilde{f})^{-8} \norme{\tilde{f}}_\infty^{-2}$ differs from $c_1(f)^{-8} \norme{f}_\infty^{-2}$ by at most a multiplicative constant depending only on $D$ and $K$, this will not affect the result.

In \cite[Theorem 3.6]{aru2022density}, the test function $f$ was assumed to be real-valued and continuous.
As we are about to explain, complex-valued test functions can be considered and the continuity assumption can be replaced by the assumption that $c_1(f) >0$. We will give some details about this point while explaining where the upper bound on $\norme{\rho}_\infty$ comes from. By a simple scaling argument, we can assume that $\norme{f}_\infty = 1$. All the bounds obtained in the proof of \cite[Theorem 3.6]{aru2022density} are uniform in $f$ with $\norme{f}_\infty =1$ and work in the exact same way for complex-valued test functions, except the bounds in the proof of \cite[Lemma 6.8]{aru2022density}.
In the proof of this lemma and with the notation therein, one has to bound from below
\begin{align*}
|\scalar{DM, h_\delta}_H|
= \beta \sum_{Q \in \mathcal{Q}_\delta} \int_{Q \times Q} f(x) \overline{f(y)} :e^{i \beta (\hat{Y}_\delta(x) + Z(x)}: :e^{-i \beta (\hat{Y}_\delta(y) + Z(y)}: R_\delta(x,y) dx dy.
\end{align*}
$\mathcal{Q}_\delta$ is the collection of squares defined in \eqref{E:Qdelta}.
Since each term in the above sum is a nonnegative real number, one can bound from below $|\scalar{DM, h_\delta}_H|$ by summing over specific squares $Q \in \mathcal{Q}_\delta$, instead of summing over all squares in $\mathcal{Q}_\delta$. In \cite{aru2022density} and when $f$ is real-valued, we chose to sum only on the squares $Q$ such that either $f(x) \geq \norme{f}_\infty/2$ for all $x \in Q$, or $f(x) \leq -\norme{f}_\infty/2$ for all $x \in Q$. For such a square $Q$, we have $f(x) \overline{f(y)} \geq \norme{f}_\infty^2/4$ for $(x,y) \in Q \times Q$.
In the proof of \cite[Lemma 6.8]{aru2022density}, the total number of squares $Q \in \mathcal{Q}_\delta$ with the above property is bounded from below by some constant $c_1$ times $\delta^{-2}$. $|\scalar{DM, h_\delta}_H|$ is then successfully lower bounded and shown to scale like $c_1$.

If $f$ is complex-valued and recalling the definition \eqref{E:cone} of $\text{Cone}_k$, we can instead sum over squares $Q$ such that there exists $k \in \{0, \dots, 15\}$ so that for all $x \in Q$, $f(x) \in \text{Cone}_k$ and $|f(x)| \geq \norme{f}_\infty/2$. For such a square $Q$, we now have for all $(x, y) \in Q \times Q$, $|f(x) \overline{f(y)}| \geq \norme{f}_\infty^2/4$ and $\arg(f(x) \overline{f(y)}) \in [-\pi/4,\pi/4]$. In particular, the real part of $f(x) \overline{f(y)}$ is at least $\norme{f}_\infty^2 \cos(\pi/4)/4$. The rest of the argument goes through. In particular, recalling the definition \eqref{E:c1f} of $c_1(f)$, if $\delta$ is small enough, we can lower bound the number of appropriate squares by $c_1(f) \delta^{-2}/2$ and give a lower bound on $|\scalar{DM, h_\delta}_H|$ by some quantity scaling like $c_1(f)$.
Together with Lemma 3.4 and Proposition 3.5 of \cite{aru2022density}, this shows that $\norme{\rho}_\infty \leq C c_1(f)^{-8}$.
\end{proof}

\begin{proof}[Proof of Lemma \ref{L:dec23}]
Let $Q_j = Q(0,2^{-j})$, $j=0, \dots, 2K$, and $Q_{2K+1} = \varnothing$.
We start with the elementary observation that
\begin{align*}
& \Prob{ \forall j = 0, \dots, 2K, \abs{\mu^*(Q_j)} \leq \eps }
\leq \Prob{ \forall j = 0, \dots, 2K, \abs{\mu^*(Q_j \backslash Q_{j+1})} \leq 2\eps} \\
& \leq \Prob{ \forall j=0, 2, 4, \dots, 2K, \abs{\mu^*(Q_j \backslash Q_{j+1})} \leq 2\eps}.
\end{align*}
In  the last step, we neglected the constraints on odd values of $j$ so that the sets $Q_j \setminus Q_{j+1}$, $i=0, 2, 4, \dots, 2K$, are now disjoint and well separated: the minimal distance between two such sets is at least $2^{-2K}$.
For $j=0, \dots, K$, let us define $A_j = Q_{2j} \setminus Q_{2j+1}$ and $B_j = Q(0,2^{-2j+1/2}) \setminus Q(0,2^{-2(j+1)+1/2})$. By construction, the $B_j$'s are disjoint and for all $j =0, \dots, K$, $A_j$ is contained in $B_j$ and is at a positive distance to $\partial B_j$. We apply Markov's property inside each $B_j$: there exists a field $h$ such that for all $j = 0, \dots, K$, $\Gamma_j := (\Gamma^* - h) \mathbf{1}_{B_j}$ has the law of a GFF in $B_j$. Moreover, $\Gamma_0, \dots, \Gamma_K$ and $h$ are mutually independent and $h$ is harmonic inside each $B_j$.
Conditionally on $h$, we can now view $\mu^*(A_j)$ as the imaginary chaos $\mu_j$ of $\Gamma_j$ integrated against the test function
\[
f_j(x) := e^{i \beta h(x)} \indic{x \in A_j}.
\]
Thus,
\begin{align*}
\Prob{ \forall j=0, \dots, K, \abs{\mu^*(A_j)} \leq 2\eps}
= \Expect{ \prod_{j=0}^K \Prob{|(\mu_j,f_j)| \leq 2 \eps \vert h} },
\end{align*}
where the expectation integrates out the randomness of $h$. By Theorem \ref{T:density}, for all $j=0, \dots, K$ and conditionally given $h$, the law of the random variable $(\mu_j,f_j)$ is absolutely continuous with respect to Lebesgue measure on $\C$. Moreover, the associated densities are uniformly bounded by some deterministic constant $C$ times $c_1(f_j)^{-8}$. We deduce that for all $j=0, \dots, K$, $\Prob{|(\mu_j,f_j)| \leq 2 \eps \vert h} \leq C c_1(f_j)^{-8} \eps^2$, implying that
\begin{align*}
\Prob{ \forall j=0, \dots, K, \abs{\mu^*(A_j)} \leq 2\eps}
& \leq C \Expect{ \prod_{j=0}^K c_1(f_j)^{-8}} \eps^{2(K+1)} \\
& \leq C \prod_{j=0}^K \Expect{ c_1(f_j)^{-8(K+1)}}^{\frac{1}{K+1}} \eps^{2(K+1)},
\end{align*}
where we used H\"older's inequality (more precisely, a generalisation to finitely many variables) in the last inequality.

To conclude the proof, it remains to show that $\Expect{c_1(f_j)^{-8(K+1)}}$ is finite for all $j$.
Let $j \in \{0,\dots, K\}$ and $x_j \in A_j$ be at distance at least $2^{-10K}$ to the boundary of $A_j$. Let $\eta \in (0,2^{-10K})$. Note that if $|h(x_j) - h(y)| \leq 1/(100\beta)$ for all $y \in B(x_j,\eta)$, then there exists $k \in \{0, \dots, 15\}$ such that $f_j(y) \in \text{Cone}_k$ for all $y \in B(x_j,\eta)$ and thus $c_1(f_j) \geq c \eta^2$ for some constant $c>0$. Thus,
\begin{align*}
\Prob{c_1(f_j) \leq c \eta^2} \leq
\P \Big( \sup_{y \in B(x_j,\eta)} |h(x_j) - h(y)| > 1/(100\beta) \Big).
\end{align*}
Since $h-h(x_j)$ is a smooth Gaussian field inside each $A_j$, then, by local chaining inequalities (e.g. \cite[Proposition 5.35]{van2014probability}),
the above right hand side term is at most $C e^{-c / \eta^2}$. This shows that the tails of $1/c_1(f_j)$ decays exponentially fast. In particular, all negative moments of $c_1(f_j)$ are finite, concluding the proof.
\end{proof}

\section{Law of the iterated logarithm: proof of Theorem \ref{th:LIL}}\label{S:LIL}

In this section, we will write
\[
\varphi(r) \coloneqq r^{2-\beta^2/2} (\log \abs{ \log r})^{\beta^2/4}, \qquad r \in (0,1).
\]
The goal of this section is to establish the law of the iterated logarithm \eqref{eq:th_LIL} in Theorem \ref{th:LIL}. The proof of this result will be decomposed into two main steps. 
First, we will show that \eqref{eq:th_LIL} holds along any geometric sequence of radii:

\begin{lemma}\label{lem:LIL_sequence}
Let $x_0 \in D$ and $\gamma \in (0,1)$ be fixed. Then
\begin{equation}
\label{eq:lem_LIL}
\limsup_{n \to \infty} \frac{\abs{\mu(Q(x_0,\gamma^n))}}{\varphi(\gamma^n)} =  c^*(\beta)^{-\beta^2/4} \quad \quad \mathrm{a.s.}
\end{equation}
\end{lemma}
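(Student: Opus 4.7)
The proof splits into upper and lower bounds along $(\gamma^n)$. For the \textbf{upper bound}, the strategy is standard Borel-Cantelli combined with the tail estimate. Fix $R_0$ with $Q(x_0,R_0)\subset D$. Applying Lemma~\ref{lem:delete harmonic} with $r=\gamma^n$, $R=R_0$ and $\lambda=1$, the probability of $\bigl||\mu(Q(x_0,\gamma^n))|-|\mu^{(R_0)}(Q(x_0,\gamma^n))|\bigr|>\gamma^{n(2-\beta^2/2)}$ decays superpolynomially in $n$, so this difference is $O(\gamma^{n(2-\beta^2/2)})=o(\varphi(\gamma^n))$ eventually almost surely. Apply then Lemma~\ref{lem:tail} with $t_n:=(1+\eps)(c^*(\beta)^{-1}\log n)^{\beta^2/4}$: the tail of $|\mu^{(R_0)}(Q(x_0,\gamma^n))|$ exceeding $\gamma^{n(2-\beta^2/2)}t_n$ is at most $n^{-(1+\eps)^{4/\beta^2}(1+o(1))}$, which is summable. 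Borel-Cantelli and sending $\eps\downarrow 0$ along a countable sequence gives $\limsup_n\abs{\mu(Q(x_0,\gamma^n))}/\varphi(\gamma^n)\le c^*(\beta)^{-\beta^2/4}$ almost surely.

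For the \textbf{lower bound}, fix a large integer $m$ and restrict attention to the sparser subsequence $\rho_k:=\gamma^{km}$; it suffices to show $\limsup_k\abs{\mu(Q(x_0,\rho_k))}/\varphi(\rho_k)\ge c^*(\beta)^{-\beta^2/4}$ almost surely. The key structural input is the iterated spatial Markov decomposition: we write $\Gamma=H_k+F_k$ on $Q(x_0,\rho_k)$, where $F_k$ is a zero-boundary GFF in $Q(x_0,\rho_k)$ independent of the harmonic piece $H_k$, and the ``annular'' increments $h_k:=H_k-H_{k-1}$ form a mutually independent sequence with $F_k$ independent of $\mathcal{G}_k:=\sigma(h_1,\dots,h_k)$. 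For $a<c^*(\beta)^{-\beta^2/4}$ put $E_k:=\{|\mu(Q(x_0,\rho_k))|>a\varphi(\rho_k)\}$. Lemmas~\ref{lem:delete harmonic} and \ref{lem:tail} (applied with $R=\rho_{k-1}$, $r=\rho_k$, ratio $\gamma^{-m}$ fixed and large) give $\Prob{E_k}\gtrsim k^{-c^*(\beta)a^{4/\beta^2}}$, and since $c^*(\beta)a^{4/\beta^2}<1$ we obtain $\sum_k\Prob{E_k}=\infty$.

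The main step is a quantitative decorrelation estimate $\Prob{E_j\cap E_k}\le C\,\Prob{E_j}\Prob{E_k}$ for $j\ne k$. For $j<k$ we condition on $\mathcal{G}_j$, which determines $H_j$ but leaves $F_j$ with its unconditional law. Writing
\[
\mu(Q(x_0,\rho_k))=\int_{Q(x_0,\rho_k)}e^{i\beta H_j(y)}\mu^{F_j}(y)\,dy
\]
and using $|e^{i\beta H_j}|=1$, the conditional moment formula of Lemma~\ref{L:moment_JSW} shows that the conditional $2N$-th moments of $|\mu(Q(x_0,\rho_k))|$ given $\mathcal{G}_j$ are dominated by the unconditional moments of $|\mu^{F_j}(Q(x_0,\rho_k))|$. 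The latter satisfy the sharp tail $\exp(-c^*(\beta)t^{4/\beta^2})$ by Lemma~\ref{lem:tail} applied with $R=\rho_j$, $r=\rho_k$ (ratio $\gamma^{-m(k-j)}$), yielding $\Prob{E_k\mid\mathcal{G}_j}\le C\Prob{E_k}$ uniformly in the conditioning. Integrating against $\indic{E_j}$ gives the decorrelation bound. A Paley-Zygmund second-moment argument then forces $\Prob{E_k\text{ i.o.}}>0$, which Kolmogorov's $0$-$1$ law (applied to the tail $\sigma$-algebra of the independent family $(h_k)$) upgrades to probability one. Sending $a\uparrow c^*(\beta)^{-\beta^2/4}$ along a countable sequence concludes.

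The \textbf{main obstacle} is making the decorrelation estimate uniform in the conditioning. Although $|e^{i\beta H_j}|=1$ immediately yields conditional moment bounds independent of $H_j$, one must verify that the resulting Markov-inequality bound for the conditional tail is comparable to the sharp rate $c^*(\beta)$ coming from Lemma~\ref{lem:tail}, rather than the weaker rate typically produced by a crude Markov bound; this is achieved by matching the saddle-point asymptotics of the two moment sequences, or alternatively by a conditional version of the tail estimate that exploits the independence of $F_j$ from $\mathcal{G}_j$.
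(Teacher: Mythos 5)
Your upper bound is correct and is essentially the paper's argument (fix $R_0$, delete the harmonic part via Lemma~\ref{lem:delete harmonic}, apply Lemma~\ref{lem:tail} with $t$ slightly supercritical, Borel--Cantelli).

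The lower bound, however, has a genuine gap, and it is located exactly where you lean on the two preliminary lemmas along a subsequence $\rho_k=\gamma^{km}$ with \emph{fixed} ratio $\gamma^{-m}$. Both Lemma~\ref{lem:delete harmonic} and Lemma~\ref{lem:tail} carry additive error terms of the form $\frac1c\exp(-c(R/r)^{2/(\beta^2+1)})$, which depend only on the ratio $R/r$. With $R/r=\gamma^{-m}$ fixed, these errors are a positive constant $\eps_m$ independent of $k$, while your target probability $\Prob{E_k}$ is of order $k^{-c^*(\beta)a^{4/\beta^2}+o(1)}\to0$. Hence the lemmas only give $\Prob{E_k}\ge k^{-c^*(\beta)a^{4/\beta^2}+o(1)}-2\eps_m$, which is vacuous for large $k$: the claimed bound $\Prob{E_k}\gtrsim k^{-c^*(\beta)a^{4/\beta^2}}$ does not follow, and likewise no Borel--Cantelli argument can absorb the harmonic part along this subsequence, since the failure probabilities do not even tend to $0$. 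This is precisely why the paper sparsifies super-geometrically, taking $r'_n=\gamma^{k_n}$ with $k_n=\lfloor Kn\log\log n\rfloor$, so that $\log(r'_{n-1}/r'_n)>(\beta^2+1)\log\log n$ and all error terms become $\exp(-c(\log n)^2)$, summable and negligible compared to $n^{-t}$. In addition, the paper then avoids your decorrelation/Paley--Zygmund/zero--one-law route altogether: after deleting the harmonic part it replaces the squares by the disjoint annuli $Q(x_0,r'_n)\setminus Q(x_0,r'_{n+1})$, whose zero-boundary chaoses are exactly independent along even (or odd) $n$, and applies the second Borel--Cantelli lemma directly. Your alternative route has two further unfinished steps that you partly acknowledge: the conditional tail bound with the \emph{sharp} constant $c^*(\beta)$, uniform in the conditioning and at fixed ratio, is not provided by the paper's lemmas (Lemma~\ref{lemma:rectangle1} only gives $C^N N^{\beta^2N/2}$, which after optimising Markov loses the constant in the exponent), and the event $\{E_k \text{ i.o.}\}$ is not manifestly a tail event of the family $(h_k)$, since each $E_k$ depends on all of $h_1,\dots,h_k$; removing this dependence again requires harmonic-part control that your fixed-ratio setup does not deliver. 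The fix is essentially to let the ratio grow with $k$, at which point you recover the paper's proof.
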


We will then control the oscillations between two consecutive points in the geometric sequence:

\begin{lemma}\label{lemma:sup_summable} Let $x_0 \in D$.
  For any $\varepsilon > 0$ there exists $\gamma \in (0,1)$ such that
  \[\sum_{n=1}^\infty \P\Big[\sup_{r \in [\gamma^{n+1},\gamma^n]} \abs{ \mu(Q(x_0,\gamma^n)) - \mu(Q(x_0,r))}  > \varepsilon \varphi(\gamma^{n+1})\Big] < \infty.\]
\end{lemma}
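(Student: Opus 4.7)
I would reduce this to a moment-based dyadic chaining argument. Define $G(r):=\mu(Q(x_0,\gamma^n))-\mu(Q(x_0,r))$, which is a continuous function of $r$ (by Proposition~\ref{prop:Holder regularity}) vanishing at $r=\gamma^n$. For $\gamma^{n+1}\le r<r'\le \gamma^n$, the increment $G(r)-G(r')$ equals $\mu$ applied to the rectangular frame $Q(x_0,r')\setminus Q(x_0,r)$, which decomposes into four rectangles each of dimensions at most $2\gamma^n\times|r-r'|$. Applying Lemma~\ref{lemma:rectangle1} to each rectangle and summing via the triangle inequality gives, for every $N\ge 1$,
\[
\E|G(r)-G(r')|^{2N}\;\le\;C^N\,(\gamma^n|r-r'|)^{(2-\beta^2/2)N}\,N^{\beta^2 N/2}.
\]
This is the only probabilistic input to the rest of the argument.

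Next, I would chain along the dyadic grids $I_k:=\{r_{k,i}:=\gamma^{n+1}+i\gamma^n(1-\gamma) 2^{-k}:0\le i\le 2^k\}$. Setting $U_k:=\max_i|G(r_{k,i+1})-G(r_{k,i})|$, a standard telescoping argument using continuity of $G$ (approximating $r\in[\gamma^{n+1},\gamma^n]$ by nested projections $\pi_k(r)\in I_k$) yields $\sup_{r\in[\gamma^{n+1},\gamma^n]}|G(r)|\le 2\sum_{k\ge 0} U_k$. Fixing a small $\eta>0$ and choosing thresholds $t_k:=c_\eta\,\varepsilon\,\varphi(\gamma^{n+1})\,2^{-\eta k}$ with $\sum_k t_k=\varepsilon\varphi(\gamma^{n+1})/2$, Markov's inequality and a union bound at each level give
\[
\P[U_k>t_k]\;\le\;2^k\,\frac{\E|G(r_{k,0})-G(r_{k,1})|^{2N}}{t_k^{2N}}\;\le\;C\!\left(\frac{C_0(1-\gamma)^{2-\beta^2/2}}{\varepsilon^2\,\gamma^{4-\beta^2}}\right)^{\!N}\!\kappa^k\,\Big(\frac{N}{\log n}\Big)^{\!\beta^2 N/2},
\]
where the $\gamma^{2n(2-\beta^2/2)N}$ coming from the moment and the $\gamma^{(n+1)(4-\beta^2)N}$ coming from $\varphi(\gamma^{n+1})^{2N}\asymp\gamma^{(n+1)(4-\beta^2)N}(\log n)^{\beta^2 N/2}$ (for $n$ large) combine into a harmless factor $\gamma^{-(4-\beta^2)N}$ absorbed into the base, and $\kappa:=2^{1+2\eta N-(2-\beta^2/2)N}<1$ provided $\eta$ is small and $N\ge 2$.

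To conclude, I would take $N=\lfloor c\log n\rfloor$ with $c\in(0,1)$: then $(N/\log n)^{\beta^2 N/2}\le 1$ while the base term becomes a polynomial $n^{c\log(\mathrm{base})}$. Choosing $\gamma\in(0,1)$ close enough to $1$ (depending on $\varepsilon$) drives $\mathrm{base}=C_0(1-\gamma)^{2-\beta^2/2}/(\varepsilon^2\gamma^{4-\beta^2})$ below $e^{-3/c}$; summing the geometric series in $k$ then yields $\P\big[\sup_{r\in[\gamma^{n+1},\gamma^n]}|G(r)|>\varepsilon\varphi(\gamma^{n+1})\big]\lesssim n^{-3}$, which is summable over $n$. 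The main delicate point is the simultaneous tuning of $\eta$, $N$, and $\gamma$; structurally, the argument succeeds because Lemma~\ref{lemma:rectangle1} bounds each rectangular contribution by $(\mathrm{area})^{(2-\beta^2/2)N}$ rather than $(\mathrm{diameter})^{(2-\beta^2/2)N}$, producing the small factor $(1-\gamma)^{(2-\beta^2/2)N}$ through which $\gamma\uparrow 1$ compensates for any fixed $\varepsilon>0$.
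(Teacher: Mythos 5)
Your argument is correct and proves the lemma, but it takes a different route from the paper: explicit dyadic chaining in place of the paper's functional-analytic step. The probabilistic input is the same in both proofs, namely Lemma~\ref{lemma:rectangle1} applied to the thin frame $Q(x_0,r')\setminus Q(x_0,r)$, giving $\E|F_n(r)-F_n(r')|^{2N}\le C^N\big(\gamma^n|r-r'|\big)^{(2-\beta^2/2)N}N^{\beta^2N/2}$, and both proofs end by letting the moment order grow like $\log n$ so that the area-induced factor $(1-\gamma)^{(1-\beta^2/4)2N}$ beats $\eps^{-2N}$ once $\gamma$ is close to $1$ --- exactly the structural point you identify at the end. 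Where you run a union bound over dyadic grids with geometrically decaying thresholds $t_k\propto 2^{-\eta k}$, the paper instead swaps the sup norm for a fractional Sobolev seminorm via the embedding \eqref{E:Sobolev_embed}, with constant uniform in $p$ and in the interval (a Garsia--Rodemich--Rumsey-type step), computes $\E\|F_n\|^p_{W^{s,p}(I_n)}$ by Fubini (Lemma~\ref{L:bound_Sobolev_Fn}), and then optimises over $p$; its optimal $p\asymp\big(\eps^{-1}(1-\gamma)^{1-\beta^2/4}(\log n)^{-\beta^2/4}\big)^{-4/\beta^2}$ grows like $\log n$, the same scale as your $N=\lfloor c\log n\rfloor$. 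The Sobolev route packages your chaining bookkeeping (the parameters $\eta$, $t_k$, $\kappa$) into a single inequality and produces a bound $\exp(-\mathrm{const}(\eps,\gamma)\log n)$ with the constant pushed above $1$ as $\gamma\uparrow1$; your version is more elementary and self-contained, at the price of the simultaneous tuning of $\eta$, $N$ and $\gamma$. Two routine points to record when writing it up: the normalisation is $\varphi(\gamma^{n+1})=\gamma^{(n+1)(2-\beta^2/2)}\big(\log((n+1)|\log\gamma|)\big)^{\beta^2/4}$, so the estimate $(N/\log n)^{\beta^2N/2}\le1$ with $c<1$ is only valid for $n\ge n_0(\gamma)$ (harmless, since finitely many terms do not affect summability, and likewise $N\ge2$ requires $n$ large); and the factor $c_\eta^{-2N}$ coming from your thresholds must be absorbed into the base constant $C_0^N$ before $\gamma$ is chosen, which is legitimate because $\eta$ (hence $c_\eta$) is fixed first, depending only on $\beta$.
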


Let us first see how the two lemmas imply Theorem~\ref{th:LIL}.

\begin{proof}[Proof of Theorem \ref{th:LIL}] Let $x_0 \in D$.
For any $\gamma \in (0,1)$ fixed, we can lower bound
\[
\limsup_{r \to 0} \frac{\abs{\mu(Q(x_0,r))}}{\varphi(r)}
\geq \limsup_{n \to \infty} \frac{\abs{\mu(Q(x_0,\gamma^n))}}{\varphi(\gamma^n)}.
\]
By Lemma \ref{lem:LIL_sequence}, the right hand side term is almost surely equal to $c^*(\beta)^{-\beta^2/4}$ which provides the desired lower bound in Theorem \ref{th:LIL}.
It remains to prove the upper bound. By Lemma \ref{lem:LIL_sequence}, to prove this upper bound, it is enough to show that for any $\eps >0$, 
there exists $\gamma \in (0,1)$ such that we have
  \begin{equation}\label{eq:difference}
    \limsup_{n \to \infty} \sup_{r \in [\gamma^{n+1},\gamma^n]} \left| \frac{\abs{\mu(Q(x_0,r))}}{\varphi(r)} - \frac{\abs{\mu(Q(x_0,\gamma^n))}}{\varphi(\gamma^n)}\right| \le \varepsilon
    \quad \quad \text{a.s.}
  \end{equation}
  To show \eqref{eq:difference}, we note that, by the triangle inequality and the fact that $\varphi(r)$ is increasing near $0$, we get for $r \in [\gamma^{n+1},\gamma^n]$ and $n$ large enough that
  \begin{align*}
    & \left| \frac{\abs{\mu(Q(x_0,r))}}{\varphi(r)} - \frac{\abs{\mu(Q(x_0,\gamma^n))}}{\varphi(\gamma^n)}\right| \\
    & = \left| \Big( \frac{1}{\varphi(r)} - \frac{1}{\varphi(\gamma^n)} \Big) \abs{\mu(Q(x_0,\gamma^n))} + \frac{\abs{\mu(Q(x_0,r))} - \abs{\mu(Q(x_0,\gamma^n))}}{\varphi(r)} \right| \\
    & \le \Big(\frac{\varphi(\gamma^n)}{\varphi(\gamma^{n+1})} - 1\Big)\frac{\abs{\mu(Q(x_0,\gamma^n))}}{\varphi(\gamma^n)} + \frac{\abs{\mu(Q(x_0,\gamma^n))-\mu(Q(x_0,r))}}{\varphi(\gamma^{n+1})}.
  \end{align*}
  We conclude the proof by noticing that in the first term
\[\lim_{n \to \infty} \frac{\varphi(\gamma^n)}{\varphi(\gamma^{n+1})} - 1 = \gamma^{\frac{\beta^2}{2}-2} \lim_{n \to \infty} \frac{(\log \log \frac{1}{\gamma^{n}})^{\frac{\beta^2}{4}}}{(\log \log \frac{1}{\gamma^{n+1}})^{\frac{\beta^2}{4}}} - 1 = \gamma^{\frac{\beta^2}{2} - 2} - 1,\]
which can be made as small as desired by choosing $\gamma$ close enough to $1$.
  The second term is handled by Lemma~\ref{lemma:sup_summable} and Borel--Cantelli.
\end{proof}

The rest of this section is devoted to the proof of Lemmas \ref{lem:LIL_sequence} and \ref{lemma:sup_summable}.

\subsection{Proof of Lemma \ref{lem:LIL_sequence}}\label{SS:LIL_sequence}

\begin{proof}[Proof of Lemma \ref{lem:LIL_sequence}]
Fix $x_0 \in D$, $\gamma \in (0,1)$ and let $r_n \coloneqq \gamma^n$. We start by proving the lower bound which is the most difficult one. Consider the subsequence $(r_n')$ of $(r_n)$ given by $r_n' = r_{k_n}$ with $k_n = \floor{K n \log \log n }$ for some $K>0$. In particular, if $K$ is large enough, it satisfies
\begin{equation}
\label{eq:proof_prop_LIL0}
\liminf_{n \to \infty} \frac{\log (r'_{n-1}/r'_n)}{\log \log n} > \beta^2+1
\qquad \mathrm{and} \qquad
\lim_{n \to \infty} \frac{\log \abs{\log r'_n}}{\log n} = 1.
\end{equation}
As
\[
\limsup_{n \to \infty} \frac{\abs{ \mu(Q(x_0,r_n)) }}{\varphi(r_n)}
\geq \limsup_{n \to \infty} \frac{\abs{ \mu(Q(x_0,r_n')) }}{\varphi(r_n')},
\]
it is enough to prove the lower bound along the subsequence $(r'_n)$.
In the following, we will decompose the GFF $\Gamma$ as the independent sum of a GFF $\Gamma^{Q(x_0,r'_n)}$ in $Q(x_0,r'_n)$ plus a harmonic function $h^{Q(x_0,r'_n)}$ in $Q(x_0,r'_n)$. We will denote $\mu^{(n)}$ the imaginary chaos associated to $\Gamma^{Q(x_0,r'_n)}$.
Lemma \ref{lem:delete harmonic} together with \eqref{eq:proof_prop_LIL0} implies that
\begin{align*}
\sum_{n} \Prob{ \abs{  \abs{ \mu(Q(x_0,{r'_n})) } - \abs{ \mu^{(n-1)}(Q(x_0,{r'_n})) } } > {r'_n}^{2-\beta^2/2}}
& \leq C \sum_n \exp \left( - c \left( \frac{r_{n-1}'}{r_n'} \right)^{\frac{2}{\beta^2 + 1}} \right) \\
& \leq C \sum_n \exp \left( -c (\log n)^2 \right) < \infty.
\end{align*}
By Borel-Cantelli lemma, it implies that a.s.
\begin{equation}
\label{eq:proof_prop_LIL3}
\limsup_{n \to \infty} \frac{\abs{ \mu(Q(x_0,{r'_n})) }}{\varphi(r_n')}
=
\limsup_{n \to \infty} \frac{\abs{\mu^{(n-1)}(Q(x_0,{r'_n}))}}{\varphi(r_n')}.
\end{equation}
If $m \in \{n,n+1\}$ and $t \in (1/2,2)$, Lemma \ref{lem:tail} implies that
\begin{equation}
\label{eq:proof_prop_LIL4}
\Prob{ \frac{\abs{ \mu^{(n-1)}(Q(x_0,{r'_m})) }}{{r'_m}^{2-\beta^2/2} } > t^{\beta^2/4} c^*(\beta)^{-\beta^2/4} (\log n)^{\beta^2/4}  }
=
n^{-t + o(1)}.
\end{equation}
In particular, Borel-Cantelli lemma implies that a.s.
\[
\limsup_{n \to \infty} \frac{\abs{ \mu(Q(x_0,{r'_n})) }}{\varphi(r_n')}
=
\limsup_{n \to \infty} \frac{\abs{ \mu^{(n-1)}(Q(x_0,{r'_n}) \backslash Q(x_0,{r'_{n+1}})) }}{\varphi(r_n')}.
\]
Now, the random variables appearing on the right hand side are independent when we restrict ourselves to even or odd integers. Moreover, by \eqref{eq:proof_prop_LIL4} and the triangle inequality, they satisfy for $t \in (1/2,2)$,
\[
\Prob{ \frac{\abs{ \mu^{(n-1)}(Q(x_0,{r'_n}) \backslash Q(x_0,{r'_{n+1}})) }}{{r'_n}^{2-\beta^2/2} } > t^{\beta^2/4} c^*(\beta)^{-\beta^2/4} (\log n)^{\beta^2/4}  }
=
n^{-t + o(1)}.
\]
With this and a simple use of the first and second Borel-Cantelli lemmas, we deduce that a.s.
\[
\limsup_{\substack{n \to \infty\\n \mathrm{~even}}} \frac{\abs{ \mu^{(n-1)}(Q(x_0,{r'_n}) \backslash Q(x_0,{r'_{n+1}}))}}{\varphi(r_n')} = c^*(\beta)^{-\beta^2/4},
\]
concluding the proof of the lower bound of \eqref{eq:th_LIL}.

The upper bound is much easier because for this bound we use the version of Borel--Cantelli lemma which does not require any independence. Consider $R_0 >0$ s.t. $Q(x_0,R_0) \subset D$. Since the ratio $r_n/R_0$ decays much faster than $r'_n/r'_{n-1}$, we can repeat the previous arguments to show the analogue of \eqref{eq:proof_prop_LIL3}:
\[
\limsup_{n \to \infty} \frac{\abs{ \mu(Q(x_0,r_n)) }}{\varphi(r_n)}
=
\limsup_{n \to \infty} \frac{\abs{ \mu^{(R_0)}(Q(x_0,r_n)) }}{\varphi(r_n)}.
\]
With Lemma \ref{lem:tail}, we have for $t \in (1/2,2)$,
\[
\Prob{ \frac{\abs{ \mu^{(R_0)}(Q(x_0,r_n)) }}{r_n^{2-\beta^2/2} } > t^{\beta^2/4} c^*(\beta)^{-\beta^2/4} (\log n)^{\beta^2/4}  }
=
n^{-t + o(1)}.
\]
The previous probabilities are thus summable if $t>1$ and Borel-Cantelli lemma concludes the proof of the upper bound of \eqref{eq:th_LIL}.
\end{proof}

\subsection{Proof of Lemma \ref{lemma:sup_summable}}\label{SS:sup_summable}

This section is dedicated to the proof of Lemma \ref{lemma:sup_summable}. Let $\gamma \in (0,1)$ and $n \geq 1$. Define the function
\begin{equation}\label{E:Fn}
F_n : r \in I_n \mapsto \mu(Q(x_0,\gamma^n) \setminus Q(x_0,r)),
\qquad \text{where} \qquad I_n := [\gamma^{n+1}, \gamma^n].
\end{equation}
In Lemma \ref{lemma:sup_summable}, we want to control the uniform norm of $F_n$. This norm is difficult to study directly. For instance, we do not know \textit{a priori} how to compute its expectation. Our proof will be based on Sobolev embeddings which allow us to control this norm via more tractable norms. Let us first recall some definitions and some results on fractional Sobolev spaces. For more background on Sobolev spaces, we refer to \cite{di2012hitchhikers}.

\medskip

\textbf{Sobolev spaces.}
Let $I \subset \R$ be an interval.
For $s \in (0,1)$ and $p\geq 1$, the Sobolev space $W^{s,p}(I)$ is defined as
\begin{equation}
W^{s,p}(I) := \left\{ f \in L^p(I): \norme{f}_{W^{s,p}(I)}^p := \norme{f}_{L^p(I)}^p + \int_{I \times I} \frac{|f(x) - f(y)|^p}{|x-y|^{sp + 1}} dx dy < \infty  \right\}.
\end{equation}
Recall also that for $\alpha \in (0,1)$, the space of $\alpha$-H\"older continuous functions is defined as
\begin{equation}
C^\alpha(I) := \left\{ f \in C^0(I): \norme{f}_{C^\alpha(I)} := \sup_{x \in I} |f(x)| + \sup_{x\neq y \in I} \frac{|f(x)-f(y)|}{|x-y|^\alpha} < \infty  \right\}.
\end{equation}
The Sobolev embedding we will rely on is the following (see e.g. \cite[Theorem 8.2]{di2012hitchhikers}): 
when $sp > 1$, then $W^{s,p}(I) \subset C^\alpha(I)$ for $\alpha = s-1/p$. Moreover, there exists $C(s)>0$ that may depend on $s$ but not on $p$, nor on $I$, such that for all intervals $I$ and for all $f \in W^{s,p}(I)$,
\begin{equation}
\label{E:Sobolev_embed}
\sup_{x\neq y \in I} \frac{|f(x)-f(y)|}{|x-y|^\alpha} \leq C(s) \left( \int_{I \times I} \frac{|f(x) - f(y)|^p}{|x-y|^{sp + 1}} dx dy \right)^{1/p}
\quad \quad (\alpha=s-1/p).
\end{equation}
The fact that the constant $C(s)$ can be chosen independently of $p$ can be checked by following the proof of \cite[Theorem 8.2]{di2012hitchhikers}.
The fact that the constant can be chosen independently of the interval $I$ follows directly by a scaling argument.

\medskip

With the Sobolev inequality \eqref{E:Sobolev_embed} at hand, the proof of Lemma \ref{lemma:sup_summable} will be based on the following estimate:

\begin{lemma}\label{L:bound_Sobolev_Fn}
Let $F_n$ be the function defined in \eqref{E:Fn}. For all $s \in (0,1-\beta^2/4)$, there exists $C>0$ that may depend on $\beta$ and $s$ such that for all $n \geq 1$ and even integers $p \geq 2$,
\begin{equation}
\label{E:L_Sobolev_Fn}
\Expect{\int_{I_n \times I_n} \frac{|F_n(r)-F_n(r')|^p}{|r-r'|^{sp+1}} dr dr'} \leq C^p p^{\frac{\beta^2}{4}p} (1-\gamma)^{(1-\beta^2/4 -s)p}
\gamma^{(2-\beta^2/2)np} \gamma^{-spn +n}.
\end{equation}
\end{lemma}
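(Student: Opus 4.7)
The plan rests on the elementary observation that for $\gamma^{n+1}\le r<r'\le\gamma^n$,
\[
F_n(r)-F_n(r')=\mu\bigl(Q(x_0,r')\setminus Q(x_0,r)\bigr),
\]
so the increment is the imaginary chaos mass of a ``square annulus''. This annulus decomposes as a disjoint union of four rectangles $R_1,\dots,R_4$, each having one side of length at most $2\gamma^n$ and the other of length exactly $r'-r$, hence area at most $C\gamma^n(r'-r)$.

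The next step is to estimate $\E|F_n(r)-F_n(r')|^p$. First I apply the triangle inequality in $L^p(\P)$ (valid since $p\ge 2$) to reduce to estimating the four contributions individually:
\[
\|F_n(r)-F_n(r')\|_{L^p(\P)}\leq\sum_{i=1}^4\|\mu(R_i)\|_{L^p(\P)}.
\]
Then, since $p$ is an even integer, I invoke Lemma \ref{lemma:rectangle1} with $N=p/2$: it gives $\E|\mu(R_i)|^p\leq C^{p/2}(\gamma^n(r'-r))^{(2-\beta^2/2)p/2}(p/2)^{\beta^2 p/4}$, so $\|\mu(R_i)\|_{L^p(\P)}\leq C(\gamma^n(r'-r))^{1-\beta^2/4}p^{\beta^2/4}$. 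Raising back to the $p$-th power yields
\[
\E|F_n(r)-F_n(r')|^p\leq C^p p^{\beta^2 p/4}\bigl(\gamma^n(r'-r)\bigr)^{(1-\beta^2/4)p}.
\]

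Substituting into the double integral reduces the task to an elementary computation. Setting $\delta:=(1-\beta^2/4-s)p$, which is strictly positive exactly by the hypothesis $s<1-\beta^2/4$, a standard calculation gives
\[
\int_{I_n\times I_n}|r-r'|^{\delta-1}\,dr\,dr'=\frac{2|I_n|^{\delta+1}}{\delta(\delta+1)},
\]
and $|I_n|=\gamma^n(1-\gamma)$. Adding the exponents of $\gamma^n$ gives $n[(1-\beta^2/4)p+\delta+1]=n[(2-\beta^2/2-s)p+1]=(2-\beta^2/2)np+(-sp+1)n$, which matches precisely the power of $\gamma$ on the right-hand side of \eqref{E:L_Sobolev_Fn}. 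The polynomial factor $1/(\delta(\delta+1))$ is absorbed into $C^p$, and $(1-\gamma)^{\delta+1}\leq(1-\gamma)^\delta$.

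There is no serious conceptual obstacle; the only subtle point is that the assumption $s<1-\beta^2/4$ is exactly what is needed to tame the diagonal singularity $|r-r'|^{-sp-1}$ against the Hölder-type increment bound. One must also observe that Lemma \ref{lemma:rectangle1} applies with a constant uniform in $n$, which is the case since all squares $Q(x_0,\gamma^n)$ can be taken inside a fixed compact $K\subset D$ (enlarging $K$ if needed to accommodate the finitely many small values of $n$).
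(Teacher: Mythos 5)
Your proposal is correct and follows essentially the same route as the paper: bound $\E|F_n(r)-F_n(r')|^p$ by applying Lemma \ref{lemma:rectangle1} to the square annulus $Q(x_0,r\vee r')\setminus Q(x_0,r\wedge r')$ (the paper does this implicitly via the same covering by rectangles that you spell out), then conclude by Fubini and the elementary computation of $\int_{I_n\times I_n}|r-r'|^{(1-\beta^2/4-s)p-1}\,dr\,dr'$ with $|I_n|=\gamma^n(1-\gamma)$. Your exponent bookkeeping and the absorption of the $1/(\delta(\delta+1))$ factor into $C^p$ match the paper's argument.
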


\begin{proof}[Proof of Lemma \ref{L:bound_Sobolev_Fn}]
Let $s \in (0,1-\beta^2/4)$, $n \geq 1$ and $p \geq 2$ be an even integer.
By Lemma \ref{lemma:rectangle1}, we have for all $r,r'\in I_n$,
\begin{align*}
\Expect{|F_n(r)-F_n(r')|^p} = \Expect{ \mu(Q(x_0,r\vee r')\setminus Q(x_0,r\wedge r'))}
\leq C^p ((r\vee r')|r-r'|)^{(1-\beta^2/4)p} p^{\frac{\beta^2}{4}p}.
\end{align*}
By Fubini, we deduce that the left hand side of \eqref{E:L_Sobolev_Fn} is at most
\begin{align*}
C^p p^{\frac{\beta^2}{4}p} \int_{I_n \times I_n} (r \vee r')^{(1-\beta^2/4)p} |r-r'|^{(1-\beta^2/4-s)p-1} dr dr'.
\end{align*}
Recalling that $I_n = [\gamma^{n+1},\gamma^n]$, we bound $(r \vee r')^{(1-\beta^2/4)p}$ in the last integral above by $\gamma^{(1-\beta^2/4)np}$. The integral over $I_n \times I_n$ of $|r-r'|^{(1-\beta^2/4-s)p-1}$ is then equal to
\[
\frac{2}{(1-\beta^2/4-s)p((1-\beta^2/4-s)p+1)} (\gamma^n-\gamma^{n+1})^{(1-\beta^2/4-s)p+1}.
\]
This concludes the proof.
\end{proof}

\begin{proof}[Proof of Lemma \ref{lemma:sup_summable}]
Let $\eps>0$ be fixed. Let $\gamma>0$ be close to 1 and $n \geq 1$. Let $s \in (0,1-\beta^2/4)$ be a fixed parameter ($s=1/2$ would do) and $p \geq 2$ be a large even integer that we will choose precisely later. Let $\alpha = s-1/p$. 
We have,
\begin{align*}
\Prob{\sup_{r \in I_n} |F_n(r)| \geq \eps \varphi(\gamma^{n+1}) }
& =\Prob{\sup_{r \in I_n} |F_n(r) - F_n(\gamma^n)| \geq \eps \varphi(\gamma^{n+1}) } \\
& \leq \Prob{\sup_{r\neq r' \in I_n} \frac{|F_n(r)-F_n(r')|}{|r-r'|^\alpha} \geq \eps \varphi(\gamma^{n+1}) (\gamma^n - \gamma^{n+1})^{-\alpha} }.
\end{align*}
By Markov's inequality, this is at most
\[
(\eps^{-1} (1-\gamma)^\alpha)^p \varphi(\gamma^{n+1})^{-p} \gamma^{\alpha p n} \Expect{\left( \sup_{r\neq r' \in I_n} \frac{|F_n(r)-F_n(r')|}{|r-r'|^\alpha} \right)^p}.
\]
Combining the Sobolev inequality \eqref{E:Sobolev_embed} and Lemma \ref{L:bound_Sobolev_Fn}, we can bound the above expectation by
\[
C^p p^{\frac{\beta^2}{4}p} (1-\gamma)^{(1-\beta^2/4 -s)p}
\gamma^{(2-\beta^2/2)np} \gamma^{-spn +n}.
\]
Putting things together and recalling that $\varphi(\gamma^{n+1}) = \gamma^{(2-\beta^2/2)np} \log((n+1) |\log \gamma| )^{\beta^2/4}$, we obtain that
\[
\Prob{\sup_{r \in I_n} |F_n(r)| \geq \eps \varphi(\gamma^{n+1}) }
\leq (1-\gamma)^{-1} (C \eps^{-1} (1-\gamma)^{1-\beta^2/4} (\log n)^{-\beta^2/4})^p p^{\frac{\beta^2}{4}p}.
\]
We can now optimise in $p$. We choose $p = e^{-1} (C \eps^{-1} (1-\gamma)^{1-\beta^2/4} (\log n )^{-\beta^2/4})^{-\frac{4}{\beta^2}}$ (or rather an even integer close to this value), leading to the following bound:
\[
\Prob{\sup_{r \in I_n} |F_n(r)| \geq \eps \varphi(\gamma^{n+1}) }
\leq (1-\gamma)^{-1} \exp \left( - \frac{\beta^2}{4} e^{-1} (C \eps^{-1} (1-\gamma)^{1-\beta^2/4})^{-\frac{4}{\beta^2}} \log n \right).
\]
If $\gamma$ is close enough to $1$, then the coefficient in front of $\log n$ is larger than 1 making the right hand side term of the above display summable in $n$. This concludes the proof.
\end{proof}

\section{Exceptional points: proof of Theorem \ref{th:exceptional points}}\label{S:exceptional}

\begin{proof}[Proof of Theorem \ref{th:exceptional points}]
We can restrict ourselves to the bulk of $D$ by considering squares which are at distance at least $\eta>0$ from the boundary and then let $\eta \to 0$. We start by proving the upper bound of \eqref{eq:th_exceptional_sup} and \eqref{eq:th_exceptional_number}.
Let $a>0$, $d  > 2 - c^*(\beta)a^{4/\beta^2}$ and $\delta,\eps>0$ be small enough so that
\begin{equation}\label{eq:proof_prop_exception1}
\delta(1-\beta^2/4-\eps) - 2\eps > \eps
\mathrm{~and~}
-2 - 2\delta + c^*(\beta) a^{4/\beta^2} + (1+\delta)d > \delta.
\end{equation}
In this part, the constants may depend on $\delta, \eps$ and $\eta$.
With the help of the Hölder estimate \eqref{eq:prop_Holder}, we are going to discretise the space $D$ and the set of radii. We will consider the sequence $r_n = n^{-1/\delta}$.
Let $x \in D$ and $r>0$. Let $n \geq 1$ such that $r_{n+1} \leq r \leq r_n$ and notice that $r_n - r_{n+1} \leq C r_{n+1}^{1+\delta}$. Take $y \in r_{n+1}^{1+\delta} \Z^2 \cap D$ such that $\abs{x-y} \leq r_{n+1}^{1+\delta}$. If
$
\abs{\mu(Q(x,r))} \geq a r^{2-\beta^2/2} \abs{\log r}^{\beta^2/4}
$, then by the Hölder estimate \eqref{eq:prop_Holder} applied to $\alpha = 1 -\beta^2/4-\eps$, we have
\begin{align*}
\abs{\mu(Q(y,r_{n+1}))} & \geq a r^{2-\beta^2/2} \abs{\log r}^{\beta^2/4} - C (r \max(r-r_{n+1}, \abs{y-x}))^{1-\beta^2/4 - \eps} \\
& \geq a r_{n+1}^{2-\beta^2/2} \abs{\log r_{n+1}}^{\beta^2/4} - C r_{n+1}^{2-\beta^2/2-2\eps + \delta(1-\beta^2/4-\eps)}\\
& \geq a r_{n+1}^{2-\beta^2/2} \abs{\log r_{n+1}}^{\beta^2/4} - C r_{n+1}^{2-\beta^2/2+\eps},
\end{align*}
where we used \eqref{eq:proof_prop_exception1} in the last inequality.
In particular, if we denote
\[
\Tc_n(a) \coloneqq \left\{ y \in r_n^{1+\delta} \Z^2 \cap D: \abs{\mu(Q(y,r_n))} \geq a r_n^{2-\beta^2/2} \abs{\log r_n}^{\beta^2/4} - C r_n^{2-\beta^2/2+\eps} \right\},
\]
it implies that
\begin{equation}
\label{eq:proof_prop_exception3}
\Tc(a) \subset \bigcap_{p \geq 1} \bigcup_{n \geq p} \bigcup_{y \in \Tc_n(a)} B(y,r_n^{1+\delta}).
\end{equation}
To conclude that $\dim(\Tc(a)) \leq d$ a.s., it is thus enough to show that
\begin{equation}
\label{eq:proof_prop_exception2}
\sum_{n \geq 1} \# \Tc_n(a) (r_n^{1+\delta})^d
\end{equation}
is a.s. finite.
Lemmas \ref{lem:delete harmonic} and \ref{lem:tail} that we apply with $R_0 = \eta/10$ imply that if $d(x,\partial D) > \eta$,
\begin{equation*}
\Prob{ \abs{ \mu(Q(x,r_n)) } \geq a r_n^{2-\beta^2/2} \abs{ \log r_n}^{\beta^2/4} } = 
\exp \left( - (1+o(1)) c^*(\beta) a^{4/\beta^2} \abs{\log r_n} \right)
\end{equation*}
where $o(1) \to 0$ as $n\to \infty$. This leads to
\[
\Expect{ \# \Tc_n(a) } \leq r_n^{-2-2\delta+c^*(\beta)a^{4/\beta^2}+o(1)}.
\]
Hence, recalling that $r_n = n^{-1/\delta}$, the expected value of \eqref{eq:proof_prop_exception2} is at most
\[
\sum_{n \geq 1} n^{-(-2-2\delta+c^*(\beta)a^{4/\beta^2} + (1+\delta)d)/\delta+o(1)}
\]
which is finite thanks to \eqref{eq:proof_prop_exception1}. In particular it shows that \eqref{eq:proof_prop_exception2} is finite a.s. which concludes the proof of the upper bound of \eqref{eq:th_exceptional_number}. The upper bound of \eqref{eq:th_exceptional_sup} follows as well: if $a > \left( 2/c^*(\beta) \right)^{\beta^2/4} $, we have shown that
\[
\sum_{n \geq 1} \Prob{ \# \Tc_n(a) \geq 1} \leq \sum_{n \geq 1} \Expect{ \# \Tc_n(a) }
\]
is finite. Borel-Cantelli lemma therefore implies that a.s. if $n$ is large enough $\Tc_n(a)$ is empty. \eqref{eq:proof_prop_exception3} then concludes that $\Tc(a)$ is almost surely empty. Since this is true for all $a > \left( 2/c^*(\beta) \right)^{\beta^2/4} $, it shows the upper bound of \eqref{eq:th_exceptional_sup}.

\medskip

Before we move on to the proof of the lower bound, let us recall the notion of limsup fractal (see \cite[Section 10.1]{morters_peres_2010}). Suppose $D=(0,1)^2$, let $\mathfrak{Q}_n\coloneqq\{(k2^{-n},(k+1)2^{-n})\times(l2^{-n},(l+1)2^{-n}),\,0\leq k,l\leq 2^n-1\}$ be the collection of dyadic squares of sidelength $2^{-n}$ for each $n\in\N$, and $\mathfrak{Q}\coloneqq\cup_{n\in\N}\mathfrak{Q}_n$. Let $Z=(Z(Q))_{Q\in\mathfrak{Q}}$ be a collection of Bernoulli random variables indexed by $\mathfrak{Q}$ such that $p_n\coloneqq\prob(Z(Q)=1)$ is independent of $Q\in\mathfrak{Q}_n$. The limsup fractal associated to the process $Z$ is by definition
\[A\coloneqq\bigcap_{n\in\N}\bigcup_{k\geq n}\bigcup_{Q\in\mathfrak{Q}_k:\,Z(Q)=1}Q.\]
For each $Q\in\mathfrak{Q}_m$ and $n\geq m$, we also write
\[M_n(Q)\coloneqq\sum_{Q'\in\mathfrak{Q}_n,\,Q'\subset Q}Z(Q').\]
\cite{morters_peres_2010} provides a lower bound for the Hausdorff dimension of a limsup fractal:

\begin{proposition}\cite[Proposition 10.6]{morters_peres_2010}\label{prop:limsup_fractal}
 Suppose that there exists $\zeta(n)\geq1$ and $\gamma\in(0,2)$ such that the following holds for each $m\in\N$. 
 \begin{enumerate}
 \item For each $Q\in\mathfrak{Q}_m$, and $n\geq m$ we have 
 \[\mathrm{Var}(M_n(Q))\leq\zeta(n)\E[M_n(Q)].\]
 \item $\underset{n\to\infty}{\lim}\zeta(n)\frac{2^{n(\gamma-2)}}{p_n}=0$.
 \end{enumerate}
 Then almost surely $\dim_{\mc{H}}A\geq\gamma$.
\end{proposition}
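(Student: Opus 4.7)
The plan is to apply Frostman's energy method: construct a random positive measure $\nu$ carried on $A$ whose $s$-energy is finite for every $s<\gamma$, and conclude via Frostman's lemma that $\dim_{\mc{H}}A\geq s$. For each $n$, define the random measure on $(0,1)^2$ by
\[
\nu_n(dx) := p_n^{-1} Z(Q_n(x))\,dx,
\]
where $Q_n(x)\in\mathfrak{Q}_n$ is the dyadic cell of level $n$ containing $x$, so that $\supp(\nu_n)$ lies in the $n$-th dyadic approximation of $A$ and $\E[\nu_n((0,1)^2)]=1$. The variance hypothesis applied to $(0,1)^2$ (or to any ancestor $Q\in\mathfrak{Q}_m$) gives $\mathrm{Var}(\nu_n((0,1)^2)) \leq \zeta(n) p_n^{-1} 4^{-n}$, which tends to $0$ since $\gamma<2$ and $\zeta(n) 2^{n(\gamma-2)}/p_n\to 0$. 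By Paley--Zygmund the total mass stays bounded below with uniform positive probability, and repeating the argument at every dyadic ancestor guarantees that a subsequential weak limit charges every dyadic subsquare.

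The crux is the $s$-energy estimate for $s<\gamma$. I would split the integration domain by the level $k$ of the smallest common dyadic ancestor $Q_k$ of $x$ and $y$, so that $|x-y|\asymp 2^{-k}$. Using the identity
\[
\iint_{Q_k\times Q_k} \E[Z(Q_n(x))Z(Q_n(y))]\,dx\,dy = 4^{-2n}\,\E[M_n(Q_k)^2],
\]
together with $\E[M_n(Q_k)^2] = (\E M_n(Q_k))^2 + \mathrm{Var}(M_n(Q_k))\leq p_n^2 4^{2(n-k)} + \zeta(n) p_n 4^{n-k}$, then multiplying by $|x-y|^{-s}\lesssim 2^{ks}$ and summing over the $4^k$ cells in $\mathfrak{Q}_k$, one finds
\[
\E\iint \frac{d\nu_n(x)\,d\nu_n(y)}{|x-y|^s}
\lesssim \sum_{k=0}^n\bigl(2^{k(s-2)} + \zeta(n) p_n^{-1} 2^{n(s-2)}\bigr),
\]
which stays bounded uniformly in $n$: the first term is a convergent geometric series since $s<2$, and the second is of order $n\cdot\zeta(n) p_n^{-1} 2^{n(s-2)}=o(1)$ by the hypothesis applied with $s<\gamma$ (in which case $2^{n(s-\gamma)}$ decays exponentially). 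Extracting a subsequential weak-$*$ limit $\nu$ by tightness, Fatou's lemma gives finite expected $s$-energy for $\nu$, and Frostman's lemma then yields $\dim_{\mc{H}}(\supp\nu)\geq s$ on the event $\{\nu\neq 0\}$, which has positive probability.

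The final step is to upgrade this to an almost sure statement. I would apply the same construction inside every $Q\in\bigcup_m\mathfrak{Q}_m$, observe that each event $\{\dim_{\mc{H}}(A\cap Q)\geq\gamma\}$ has probability bounded below uniformly in $Q$, and then conclude by a Borel--Cantelli / tail argument, using that $A$ is measurable with respect to the $Z(Q')$ for $Q'$ of arbitrarily large level. The main technical obstacle is the covariance bookkeeping in the energy bound: the hypothesis only controls the aggregated variance of $M_n(Q)$ rather than pairwise covariances of the $Z(Q')$, so one must avoid decomposing $\E[Z(Q')Z(Q'')]$ individually and instead use $\E[M_n(Q)^2]$ as a black box, as in the estimate above.
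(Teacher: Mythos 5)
Your energy estimate—the crux of the argument—rests on a false geometric claim. If $k$ is the level of the smallest common dyadic ancestor $Q_k$ of $x$ and $y$, one only has $|x-y|\lesssim 2^{-k}$, not $|x-y|\asymp 2^{-k}$: two points on opposite sides of a low-level dyadic line (say straddling $\{1/2\}\times(0,1)$) can be arbitrarily close while their common ancestor is the unit square itself. For such pairs $|x-y|^{-s}$ is not $O(2^{ks})$, so the displayed bound on $\E\iint |x-y|^{-s}\,d\nu_n\,d\nu_n$ does not follow; what your computation controls is the energy for the dyadic ultrametric, which is a lower bound for, not an upper bound on, the Euclidean energy. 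The step can be repaired while keeping your ``use $\E[M_n(Q)^2]$ as a black box'' philosophy: decompose according to the Euclidean scale $|x-y|\asymp 2^{-k}$, note that such a pair lies in a pair of equal or neighbouring level-$k$ dyadic squares, and use $\nu_n(Q)\nu_n(Q')\le\tfrac12(\nu_n(Q)^2+\nu_n(Q')^2)$ together with $\E[\nu_n(Q)^2]=p_n^{-2}4^{-2n}\E[M_n(Q)^2]$; this yields exactly the sum you wrote down (alternatively, one can observe that Hausdorff dimension computed from dyadic coverings coincides with the Euclidean one and run Frostman for the ultrametric). As written, though, the key inequality fails.

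Two further steps are not justified. First, the supports of the $\nu_n$ are not nested, so a weak-$*$ subsequential limit $\nu$ is only known to be carried by $\bigcap_N\overline{\bigcup_{k\ge N}E_k}$, where $E_k$ is the union of the marked level-$k$ squares; this closure can be strictly larger than $A$, and you still owe an argument that $\nu(A)>0$. Second, and more seriously, the proposition assumes no independence whatsoever among the $Z(Q)$ (neither within a level nor across levels), so the concluding ``Borel--Cantelli / tail argument'' has nothing to run on: uniformly positive probability on every cell does not upgrade to an almost sure statement here. Note that the paper does not reprove this result (it cites M\"orters--Peres), and a correct route to the a.s.\ conclusion uses only Chebyshev plus reverse Fatou: hypothesis (1) gives $\P\big(M_n(Q)\ge\tfrac12\E M_n(Q)\big)\ge 1-4\zeta(n)4^m/(p_n4^{n})\to1$ for each fixed $Q\in\mathfrak{Q}_m$, hence almost surely every dyadic square contains at least $\tfrac12 p_n4^{n-m}\ge\tfrac12 2^{n\gamma}4^{-m}$ marked level-$n$ squares for infinitely many $n$ (using $p_n\ge\zeta(n)2^{n(\gamma-2)}$ eventually), and from this one builds, realization by realization, a nested Cantor-type subset of $A$ carrying a measure with Frostman exponent $\gamma-\eps$. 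So your first- and second-moment computations for the total mass are fine, but both the energy step and the almost-sure upgrade need genuinely different arguments from the ones you propose.
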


For $n\in\N$, let $r_n\coloneqq2^{-n}$ and $H_n\coloneqq r_n\Z^2\cap D'$ where $D' \Subset D$. Fix $\delta\in(0,\frac{1}{2})$ and set:
\[
\mc{C}_n(a)\coloneqq\left\lbrace x\in H_n:\,\abs{\mu^{(Q(x,r_n^{1-2\delta}))}(Q(x,r_n^{1-\delta}))} \geq a r_n^{(1-\delta)(2-\frac{\beta^2}{2})} \abs{\log r_n^{1-\delta}}^\frac{\beta^2}{4} \right\rbrace.
\]
The limsup fractal we will be interested in is defined by:
\[A(a)\coloneqq\bigcap_{n\in\N}\bigcup_{k\geq n}\bigcup_{x\in\mc{C}_k(a)}Q(x,r_k).\]
Let us first show that $A(a)\subset\mc{T}(a)$.
To this end, let us start by noticing that Lemma \ref{lem:delete harmonic} and Borel--Cantelli lemma imply that, almost surely, if $n \in \N$ is large enough, then for all $x \in H_n$,
\begin{equation}\label{E:LA1}
    \abs{ \big|\mu(Q(x,r_n^{1-\delta}))\big| - \big|\mu^{(Q(x,r_n^{1-2\delta}))}(Q(x,r_n^{1-\delta}))\big|} \le r_n^{(1-\delta)(2-\frac{\beta^2}{2})}.
\end{equation}
Let $x\in A(a)$. By definition, there exist an increasing sequence of integers $(k_n)_{n \geq 1}$ and a sequence of points $(x_{k_n})_{n \geq 1}$ such that $x_{k_n} \in \mc{C}_{k_n}(a)$ and $x \in Q(x_{k_n},r_{k_n})$ for all $n\in\N$. Now by Proposition~\ref{prop:Holder regularity} applied to any $\alpha$ satisfying $\frac{1-\delta}{1-\frac{\delta}{2}}(1-\frac{\beta^2}{4})<\alpha<1-\frac{\beta^2}{4}$, we find that 
\[|\mu(Q(x,r_{k_n}^{1-\delta}))-\mu(Q(x_{k_n},r_{k_n}^{1-\delta}))|=O(r_{k_n}^{\alpha(1-\delta)}\times r_{k_n}^\alpha)=o\left(r_{k_n}^{(1-\delta)(2-\beta^2/2)}\abs{\log r_{k_n}^{1-\delta}}^{4/\beta^2}\right).\]
Together with \eqref{E:LA1}, we get that
\[
\underset{k\to\infty}{\limsup}\frac{|\mu(Q(x,r_{k_n}^{1-\delta}))|}{r_{k_n}^{(1-\delta)(2-\beta^2/2)}\abs{\log r_{k_n}^{1-\delta}}^{4/\beta^2}}\geq a,
\]
showing that $x\in\mc{T}(a)$.

To obtain a lower bound on the Hausdorff dimension of $\mc{T}(a)$, it is sufficient to obtain one for $A(a)$. To this end, we will show that we can apply Proposition \ref{prop:limsup_fractal}.
Let $M_n(Q)\coloneqq\#(\mc{C}_n(a)\cap Q)$ as above. Notice first that, by Lemma \ref{lem:tail}, $p_n\coloneqq \prob(x\in\mc{C}_n(a))$ is independent of $x$ and equal to  
$r_n^{(1-\delta)c^*(\beta)a^{4/\beta^2}+o(1)}$ as $n\to\infty$. 
Second, we bound the second moment of $M_n(Q)$. Assume $Q=[0,1]^2$ for simplicity. 
Notice that the events $\{x\in \mc{C}_n(a)\}$ and $\{y\in\mc{C}_n(a)\}$ are independent as soon as $x,y\in H_n$ satisfy $|x-y|>2 \sqrt{2} r_n^{1-2\delta}$. 
Therefore,
\begin{align*}
\E\left[M_n(Q)^2\right]
&=\sum_{\substack{x,y\in H_n\\|x-y|>2\sqrt{2}r_n^{1-2\delta}}}\prob\left(x\in\mc{C}_n(a),y\in\mc{C}_n(a)\right)+\sum_{\substack{x,y\in H_n\\|x-y| \leq 2\sqrt{2}r_n^{1-2\delta}}}\prob\left(x\in\mc{C}_n(a),y\in\mc{C}_n(a)\right)\\
&\leq \sum_{\substack{x,y\in H_n\\|x-y|>2\sqrt{2}r_n^{1-2\delta}}}\prob\left(x\in\mc{C}_n(a)\right)\prob\left(y\in\mc{C}_n(a)\right)+\sum_{\substack{x,y\in H_n\\|x-y| \leq 2\sqrt{2}r_n^{1-2\delta}}}\prob\left(x\in\mc{C}_n(a)\right)\\
&\leq\E[M_n(Q)]^2+C r_n^{-4\delta} \E[M_n(Q)].
\end{align*}
Thus, with $\zeta(n)\coloneqq Cr_n^{-4\delta}$, we have:
\[\mathrm{Var}(M_n(Q))\leq\zeta(n)\E[M_n(Q)].\]
Recalling that $p_n = r_n^{(1-\delta)c^*(\beta)a^{4/\beta^2} + o(1)}$, we conclude by Proposition~\ref{prop:limsup_fractal} that a.s. $\dim_{\mc{H}}(A(a))\geq 2-(1-\delta)c^*(\beta)a^{4/\beta^2}-4\delta$. The lower bound of \eqref{eq:th_exceptional_number} follows since $\delta$ can be chosen arbitrarily small. The lower bound of \eqref{eq:th_exceptional_sup} is then a consequence of \eqref{eq:th_exceptional_number} which finishes the proof of Theorem \ref{th:exceptional points}.
\end{proof}

\section{Besov regularity of imaginary chaos: proof of Theorem \ref{thm:regularity}}\label{sec:Besov}

In this section, we will prove Theorem \ref{thm:regularity} on the optimal Besov regularity of the imaginary chaos. 
The proof is based on wavelet analysis and similar ideas were also used in \cite{JSV}. As a new ingredient in the boundary case where the smoothness index $s = -\beta^2/2$, we employ a law of large numbers type argument to show that the quantities $A_j$ (see below) appearing in the wavelet version of the Besov norm converge almost surely to their nonzero mean.
On the other hand, the proof in the case $p=+\infty$ will rely on what we have developed in the earlier sections of the current paper, especially Theorem \ref{th:LIL}.
Let us first recall from \cite[Chapter 6]{Meyer} basic properties of wavelet bases and the characterisation of Besov spaces using them.

\medskip

\textbf{Wavelet bases and Besov spaces.}
The wavelet bases we will use in 2D are defined in terms of a father wavelet $\phi \in C^2(\reals^2)$ and three mother wavelets $\psi^{(\tau)} \in C^2(\reals^2)$ for $\tau \in \{0,1\}^2 \setminus \{(0,0)\}$. For $j \geq 0$, let
\[\Lambda_j := \{ \lambda = 2^{-j} k + 2^{-j-1} \tau : k \in \Z^2, \tau \in \{0,1\}^2 \setminus \{(0,0)\}\}\]
and for $\lambda = 2^{-j} k + 2^{-j-1} \tau \in \Lambda_j$, let $\psi_\lambda(x) = 2^{j} \psi^{(\tau)}(2^j x - k)$.
The father and mother wavelets are such that $\{\phi(\cdot - k), k \in \Z^2\} \cup \bigcup_{j \geq 0} \{ \psi_\lambda, \lambda \in \Lambda_j \}$
forms an orthonormal basis of $L^2(\reals^2)$. 

The usefulness of wavelet bases stems from the fact that the wavelets can be chosen to have several desirable properties at the same time. In particular we will assume that $\supp \phi, \supp \psi^\tau \subset K$ for some square $K \subset \reals^2$.

The localisation and smoothness properties allows one to characterise various function spaces in terms of the wavelet coefficients. The characterisation for Besov spaces that we will use is the following: If $f \in \S'(\reals^2)$ is a distribution with the wavelet series
\begin{equation}\label{E:wavelet_decomposition}
f(x) = \sum_{k \in \integers} \gamma(k) \phi(x - k) + \sum_{j=0}^\infty \sum_{\lambda \in \Lambda_j} \alpha(\lambda) \psi_\lambda(x),
\end{equation}
then a norm equivalent to the Besov norm $\|\cdot\|_{B_{p,q}^s(\reals^2)}$, assuming $|s| < 2$,\footnote{Here $2$ comes from the fact that we chose our wavelets to be $C^2$. More generally one could choose them to be $C^r$ and expand the valid range of $s$ to $|s| < r$.} is given by
\[\|f\|_{B_{p,q}^s(\reals^2)} \sim \|(\gamma(k))_{k \in \integers}\|_{\ell^p} + \Big\|\Big\{2^{j(1 - 2/p + s)}\Big(\sum_{\lambda \in \Lambda_j} |\alpha(\lambda)|^p\Big)^{1/p}\Big\}_{j \ge 1}\Big\|_{\ell^q}.\]
If $D$ is an open subset of $\R^2$, the space $B_{p,q,\loc}^s(D)$ is defined as the space of distributions $f \in \S'(\R^2)$ such that for all $u \in C^\infty_c(D)$, $f u \in B_{p,q}^s(\R^2)$.

\medskip

The second tool we will use is a decomposition of the GFF.

\medskip

\textbf{Decomposition using exactly scaling field.} In \cite[Section~2]{JSV} it was noted that to study the Besov-regularity of complex chaos it is enough to consider the so-called exactly scaling case. Let us briefly recall the details.

Let $X$ be the log-correlated field on $\reals^2$ with the covariance
\begin{equation}
    \label{E:LA2}
    \E[X(x)X(y)] = \log^+ \frac{R}{|x-y|}
\end{equation}
for some $R > 0$, where $\log^+ t = \log(\max(t,1))$. This is indeed a covariance function in two dimensions, since one can write
\[\log^+ \frac{R}{|x-y|} = \int_0^\infty \Big(1 - \sqrt{\frac{e^u |x-y|}{R}}\Big)_+  \, du + 2\Big(1 - \sqrt{\frac{|x-y|}{R}}\Big)_+,\]
which expresses the covariance kernel $\log^+ \frac{1}{|x-y|}$ as a sum (integral) of positive-definite kernels with covariance $(1 - \sqrt{e^u |x-y|})_+$, see \cite{RobertVargas2010} for more details and references. The integral part corresponds to a so-called $\star$-scale invariant field. In particular one can find approximations $X_s(x)$, $s \in (0,R]$, of the field by cutting the integral at level $\log \frac{R}{s}$, leading to a process with the covariance structure
\begin{align}
    \E[X_s(x)X_t(y)] & = \int_0^{\log \frac{R}{s \vee t}} \Big(1 - \sqrt{\frac{e^u|x-y|}{R}}\Big)_+ \, du + 2\Big(1 - \sqrt{\frac{|x-y|}{R}}\Big)_+ \nonumber \\
    & = \log^+ \frac{R}{s \vee t \vee |x-y|} + 2 - 2\sqrt{\frac{|x-y|}{s \vee t \vee |x-y|}}. \label{eq:exactly_scaling_star_scale}
\end{align}

By (the proof of) \cite[Theorem~A]{JSW2} we can for any $K \coloneqq \overline{U} \subset D$ where $U \subset D$ is open find a coupling such that $\Gamma|_K = X + G$, where $G$ is an a.s. smooth Gaussian field. Note that for $\varepsilon < R/2$ we have that $\E[X^{(\varepsilon)}(x)^2] = \log \frac{1}{\varepsilon}$, where $X^{(\varepsilon)}$ is the $\eps$-circle average of $X$. Hence when constructing the chaos for the field $X$ there is no difference in Wick normalisation and renormalisation by $\varepsilon^{-\beta^2/2}$. It follows that if $\nu$ is the chaos constructed using $X$, we have that for any $u \in C_c^\infty(D)$, $u \mu = e^{i\beta G} u \nu$ almost surely. The exponent in front is smooth and hence a.s.
\begin{equation}\label{E:Besov_mu_nu}
\mu \in B^s_{p,q,\loc}(D) \qquad \text{if and only if} \qquad \nu \in B^s_{p,q,\loc}(D).
\end{equation}

The usefulness of the field $X$ (whose covariance is given by \eqref{E:LA2}) stems from the fact that it satisfies the following scaling relation in law: For any $x_0 \in \reals^2$ and $\varepsilon \le 1$, we have $(X(\varepsilon x))_{x \in B(x_0,R/2)} \sim (X(x) + N)_{x \in B(x_0,R/2)}$, where $N$ is a centered Gaussian with variance $\log(1/\varepsilon)$ independent of $X$. The restriction to a ball of radius $R/2$ ensures that the covariance is a pure logarithm and does not get truncated. This implies in particular that if $U \subset B(0,R/2)$, then
\[\int_{\varepsilon U} \nu(x) f(x) \, dx \sim \varepsilon^{2 - \frac{\beta^2}{2}} e^{i\beta N(0,\log \varepsilon^{-1})} \int_U \nu(x) f(\varepsilon x) \, dx.\]
In what follows we will choose $R$ to be so large that the supporting square $K$ of the mother wavelets lies inside $B(0,R/2)$, which lets us employ the scaling relation when computing $\E[|\alpha(\lambda)|^p]$.

\medskip

We are now ready to prove Theorem \ref{thm:regularity}.

\begin{proof}[Proof of Theorem \ref{thm:regularity}]

Recall that we are interested in whether $\mu$, or equivalently $\nu$ (see \eqref{E:Besov_mu_nu}), belongs to $B_{p,q,\loc}^s(D)$ where $s$ equals the critical value $s=-\beta^2/2$. Let $u \in C^\infty_c(D)$ and denote by $\alpha(\lambda)$ and $\gamma(k)$ the coefficients appearing in the wavelet decomposition \eqref{E:wavelet_decomposition} of $u \nu$ where $\nu$ is the exactly scaling chaos defined above. We will denote by
\begin{equation}
    \label{E:def_Aj}
    A_j := 2^{j(1 - 2/p -\beta^2/2)}\Big(\sum_{\lambda \in \Lambda_j} |\alpha(\lambda)|^p\Big)^{1/p}.
\end{equation}
The proof will deal with three cases separately.
\medskip

\textsc{Case 1}: $p<\infty,q=\infty$.
We want to show that $\mu\in B^{-\beta^2/2}_{p,\infty,\loc}(D)$ a.s. 
Since the function $u \in C^\infty_c(D)$ we fixed at the beginning of the proof is arbitrary, it is enough to show that $u \nu \in B^{-\beta^2/2}_{p,\infty}(\R^2)$ a.s.
Note that, by compactness of the support of $u\nu$, the sequence $(\gamma(k))_{k \in \Z}$ appearing in the wavelet decomposition \eqref{E:wavelet_decomposition} of $u\nu$ contains only finitely many nonzero terms. Its $\ell^p$-norm is therefore finite and the goal now is to show that the sequence $(A_j)_{j \geq 1}$, defined in \eqref{E:def_Aj}, is almost surely bounded.
Note also that, again by compactness of the support of $u\nu$, for all $j \geq 1$, the number of nonzero $\alpha(\lambda)$ for $\lambda \in \Lambda_j$ is at most $C 2^{2j}$ which implies by H\"older's inequality that, for all $p'>p$:
\begin{equation}\label{E:pf_holder}
2^{-2p/j} \Big(\sum_{\lambda \in \Lambda_j} |\alpha(\lambda)|^p\Big)^{1/p} \leq C 2^{-2p'/j} \Big(\sum_{\lambda \in \Lambda_j} |\alpha(\lambda)|^{p'}\Big)^{1/p'}.
\end{equation}
This shows that it is actually enough to show that the sequence $(A_j)_{j \geq 1}$ is a.s. bounded when $p \ge 2$ is an even integer.
We will proceed in two steps by showing that
\begin{align}
\label{E:pf_Besov3}
    & (\E[A_j^p])_{j \geq 1} \text{ is bounded,}\\
    \text{and} \qquad & A_j^p - \E[A_j^p] \to 0 \qquad \text{a.s.}
\label{E:pf_Besov4}
\end{align}

Let us start with the proof of \eqref{E:pf_Besov3}.
By Fubini, we have for all $j \geq 1$,
\begin{align*}
\E[A_j^p] = 2^{j(1-2/p-\beta^2/2)p} \sum_{\lambda \in \Lambda_j} \E[|\alpha(\lambda)|^p]
\end{align*}
and for all $\lambda = 2^{-j}k + 2^{-j-1} \tau \in \Lambda_j$,
\begin{align*}
\E[|\alpha(\lambda)|^p] = 2^{jp} \E \abs{ \int \nu(x) u(x) \psi^\tau(2^jx -k) dx}^p.
\end{align*}
Using the scaling property and translation invariance of $\nu$ we see that
\begin{align}
  \E[|\alpha(\lambda)|^p] & = 2^{jp} \E \abs{ \int \nu(x) u(x + 2^{-j}k) \psi^\tau(2^j x) \, dx}^p \nonumber \\
  & = 2^{jp} 2^{p(-2j+j\beta^2/2)} \E \abs{ \int \nu(x) u(2^{-j} (x + k)) \psi^\tau(x) \, dx}^p \label{eq:alpha_scaling} \\
  & \le 2^{-j(1-\beta^2/2)p} \sqrt{\E |\nu(K)|^{2p}} \|u\|_\infty^p \|\psi^\tau\|_\infty^p \nonumber \\
  & \le C 2^{-j(1 - \beta^2/2)p}, \nonumber
\end{align}
where $C > 0$ depends on $p,\psi$ and $u$ but not on $j$ or $k$. Since $\E[|\alpha(\lambda)|^p]$ vanishes for all $\lambda \in \Lambda_j$ except for at most $O(2^{2j})$ of them, this concludes that $(\E[A_j^p])_{j \geq 1}$ is bounded as stated in \eqref{E:pf_Besov3}.

We now move to the proof of \eqref{E:pf_Besov4}.
Recall that $p=2N$ is an even integer and define 
\[S_j:=\sum_{\lambda\in\Lambda_j}|\nu(u \psi_\lambda)|^{2N}, \quad \quad j \geq 1.\] 
Let $\lambda_1,\lambda_2\in\Lambda_j$ and set $D_i:=\supp\psi_{\lambda_i}$, $i=1,2$. Suppose that $D_1\cap D_2=\emptyset$, set $\Delta:=\sup\{|x-y|,\,x\in D_1,y\in D_2\}$ and $\delta:=\inf\{|x-y|,\,x\in D_1,y\in D_2\}$. 
Recall the notation $\Ec(\Gamma;\xf;\yf)$ defined in \eqref{E:defEcal}. For $m=1,2$, $\E[|\nu(u \psi_{\lambda_m})|^p]$ equals 
\begin{align*}
\int_{D_m^N \times D_m^N} d\xf^m \,d\yf^m e^{\beta^2 \Ec(X;\xf^m;\yf^m)} \prod_{k=1}^N u(x^m_k) u(y^m_k) \psi_{\lambda_m}(x^m_k) \psi_{\lambda_m}(y^m_k)
\end{align*}
and $\E[|\nu(u \psi_{\lambda_1})|^p |\nu(u \psi_{\lambda_2})|^p]$ equals 
\begin{align*}
\int_{D_1^N \times D_1^N \times D_2^N \times D_2^N} & d\xf^1 \,d\yf^1 \,d\xf^2 \,d\yf^2 e^{\beta^2 \Ec(X;\xf^1 \xf^2;\yf^1 \yf^2)}
\prod_{m=1,2} \prod_{k=1}^N u(x^m_k) u(y^m_k) \psi_{\lambda_m}(x^m_k) \psi_{\lambda_m}(y^m_k).
\end{align*}
In the last display we denoted by $\xf^1 \xf^2 \in D_1^N \times D_2^N$ the concatenation of the vectors $\xf^1 \in D_1^N$ and $\xf^2 \in D_2^N$. We deduce that $\mathrm{Cov}(|\nu(u \psi_{\lambda_1})|^p ; |\nu(u \psi_{\lambda_2})|^p)$ is at most
\begin{align*}
C^N \norme{u}_\infty^{4N} \norme{\psi_\lambda}_\infty^{4N} \int_{D_1^N \times D_1^N \times D_2^N \times D_2^N} d\xf^1 \,d\yf^1 \,d\xf^2 \,d\yf^2 \Big| e^{\beta^2 \Ec(X;\xf^1 \xf^2;\yf^1 \yf^2)} - \prod_{m=1,2} e^{\beta^2 \Ec(X;\xf^m;\yf^m)} \Big|.
\end{align*}
Expanding $\Ec(\Gamma;\xf^1 \xf^2;\yf^1 \yf^2)$, we have for all $(\xf^1,\yf^1,\xf^2,\yf^2) \in D_1^N \times D_1^N \times D_2^N \times D_2^N$,
\[
\Ec(X;\xf^1 \xf^2;\yf^1 \yf^2) = \sum_{m=1,2} \Ec(X;\xf^m;\yf^m) + \sum_{1 \leq k,l \leq N} \log \frac{|x_k^1 - x_l^2| |y_k^1 - y_l^2|}{|x_k^1 - y_l^2| |y_k^1 - x_l^2|}.\]
By definition of $\Delta$ and $\delta$ and approximating the Green function by the log of the distance, the absolute value of the second sum on the right hand side is at most $2 N^2 \log \Delta/\delta$. We thus obtain that $\mathrm{Cov}(|\mu(u \psi_{\lambda_1})|^p ; |\mu(u \psi_{\lambda_2})|^p)$ is at most
\begin{align*}
    C^N \Big( \Big( \frac{\Delta}{\delta} \Big)^{2N^2 \beta^2} - 1 \Big) \norme{u}_\infty^{4N} \norme{\psi_\lambda}_\infty^{4N} \prod_{m=1,2} \int_{D_m^N \times D_m^N} d\xf^m \,d\yf^m e^{\beta^2 \Ec(X;\xf^m;\yf^m)}.
\end{align*}
By scaling, the above product of integrals is at most $C_N 2^{-(2-\beta^2/2)4jN}$. Since we also have $\norme{\psi_\lambda}_\infty^{4N} \leq 2^{4jN} \norme{\psi}_\infty^{4N}$, we have obtained that
\begin{equation}
\label{E:pf_Besov6}
    \mathrm{Cov}(|\nu(u \psi_{\lambda_1})|^p ; |\nu(u \psi_{\lambda_2})|^p) \leq C_N 2^{-(1-\beta^2/2)4jN} \Big( \Big( \frac{\Delta}{\delta} \Big)^{2N^2 \beta^2} - 1 \Big).
\end{equation}
This is the main estimate we will need in order to bound:
\begin{equation}
\label{E:pf_Besov7}
    \mathrm{Var}(S_j) = \sum_{\lambda_1, \lambda_2 \in \Lambda_j} \mathrm{Cov}(|\mu(u \psi_{\lambda_1})|^p ; |\mu(u \psi_{\lambda_2})|^p).
\end{equation}
Indeed, if $|\lambda_1-\lambda_2|\geq 2^{-j/2}$, then the associated $\Delta$ and $\delta$ satisfy $\frac{\Delta}{\delta}=1+O(2^{-j/2})$ and \eqref{E:pf_Besov6} shows that
\[
\mathrm{Cov}(|\nu(u \psi_{\lambda_1})|^p ; |\nu(u \psi_{\lambda_2})|^p) \leq C_N 2^{-(1-\beta^2/2)4jN-j/2}.
\]
The contribution of such $\lambda_1, \lambda_2$ to the sum in \eqref{E:pf_Besov7} is thus at most $C_N 2^{-j/2-4j(2-\beta^2/2)N+4j} = C_p 2^{-j/2-2j(2-2/p-\beta^2/2)p}$, recalling that $p=2N$. On the other hand, if $|\lambda_1-\lambda_2|\leq 2^{-j/2}$, then we simply use Cauchy--Schwarz inequality and \eqref{eq:alpha_scaling} to bound
$\mathrm{Cov}(|\nu(u \psi_{\lambda_1})|^p ; |\nu(u \psi_{\lambda_2})|^p) \leq C 2^{-2pj(2-\frac{\beta^2}{2})}$. Since there are at most $C2^{3j}$ couples $(\lambda_1, \lambda_2)$ such that $|\lambda_1-\lambda_2|\leq 2^{-j/2}$, this shows that the contribution of such couples to the sum in \eqref{E:pf_Besov7} is at most $C_p 2^{-j-2j(2-2/p-\beta^2/2)p}$. Altogether, we have shown that
\[
\mathrm{Var}(S_j) \leq C 2^{-j/2-2j(2-2/p-\beta^2/2)p}.
\]
Applying the Chebychev inequality, and recalling that $A_j$ is defined in \eqref{E:def_Aj}, we get that
\begin{align*}
\P\left(|A_j^{p}-\E[A_j^{p}]|\geq 2^{-j/8}\right)
&\leq 2^{j/4} \mathrm{Var}(A_j^{p})
= 2^{j/4} 2^{2j(2-2/p-\beta^2/2)p} \mathrm{Var}(S_j)
\leq C2^{-j/4}.
\end{align*}
Borel--Cantelli lemma then concludes the proof of \eqref{E:pf_Besov4}. Together with \eqref{E:pf_Besov3}, this shows that $(A_j)_{j \geq 1}$ is a.s. bounded, and thus $\mu u\in B^{-\beta^2/2}_{p,\infty}(\R^2)$ a.s., finishing the proof in this case.

\medskip

\textsc{Case 2}: $1 \leq p<\infty,q<\infty$. We want to show that $\mu \not\in B_{p,q,\loc}^{-\beta^2/2}(D)$ a.s. It is sufficient to prove that $u \nu \not\in B_{p,q}^{-\beta^2/2}(\R^2)$ a.s. for some specific choice of $u$, so let us pick $u \in C_c^\infty(D)$ which is $1$ in some square $Q \subset D$.
Arguing as in \eqref{E:pf_holder}, it is enough to show that $u\nu \not\in B_{1,q}^{-\beta^2/2}(\R^2)$ where $p$ has been replaced by 1. To conclude that $(A_j)_{j \geq 1}$ does not belong to $\ell^q$ a.s., it is then enough to show that $(\E A_j)_{j \geq 1}$ is bounded away from 0 and that $A_j - \E[A_j] \to 0$ a.s.
Let $j \geq 1$ be large enough and $\lambda = 2^{-j} k + 2^{-j-1}\tau \in \Lambda_j$ be such that the support of $\psi_\lambda$ is included in $Q$. There are at least $c 2^{2j}$ such $\lambda$. By \eqref{eq:alpha_scaling} we have
\[\E[|\alpha(\lambda)|] = 2^{j(-1+\beta^2/2)} \E\abs{\int \nu(x) \psi^\tau(x) \, dx} \ge C 2^{j(-1+\beta^2/2)},\]
for some $C > 0$ and hence $\E A_j \ge C$ for all $j$ large enough.

Thus it remains to show that $A_j - \E[A_j] \to 0$ a.s. Our basic strategy is the same as above, i.e. we will show the exponential decay of variance: $\E[|A_j - \E[A_j]|^2] \lesssim 2^{-j/2}$. We cannot however easily compute expectations such as $\E[|\alpha(\lambda_1)| |\alpha(\lambda_2)|]$ exactly since we are not looking at even powers, and therefore some extra care has to be taken in bounding the variance.

Let us begin by denoting $d(\lambda_1,\lambda_2)$ the distance between the supports of $\psi_{\lambda_1}$ and $\psi_{\lambda_2}$ and write
\[\E[|A_j - \E[A_j]|^2] = 2^{-2j(1 + \frac{\beta^2}{2})} \sum_{\lambda_1, \lambda_2 \in \Lambda_j} \E[(|\alpha(\lambda_1)| - \E[\alpha(\lambda_1)])(|\alpha(\lambda_2)| - \E[\alpha(\lambda_2)])] \eqqcolon S_1 + S_2\]
where $S_1$ and $S_2$ consist of (nearly) diagonal and off-diagonal terms:
\begin{align*}
    S_1 & = 2^{-j(2 + \beta^2)} \sum_{d(\lambda_1,\lambda_2) \le c 2^{-j}} \E[(|\alpha(\lambda_1)| - \E[\alpha(\lambda_1)])(|\alpha(\lambda_2)| - \E[\alpha(\lambda_2)])] \\
    S_2 & = 2^{-j(2 + \beta^2)} \sum_{d(\lambda_1,\lambda_2) > c 2^{-j}} \E[(|\alpha(\lambda_1)| - \E[\alpha(\lambda_1)])(|\alpha(\lambda_2)| - \E[\alpha(\lambda_2)])].
\end{align*}
Here $c > 0$ is a constant chosen so that $|x-y| \le c 2^{-j}$ whenever $x, y \in \supp \psi_{\lambda}$ for some $\lambda \in \Lambda_j$.
The diagonal terms can be handled by Cauchy--Schwarz, similarly to the Case 1 above, showing that each term is at most $C 2^{-j(2-\beta^2)}$ and there are at most $C 2^{2j}$ terms, yielding that $S_1 \lesssim 2^{-2j}$.

For the off-diagonal terms we will denote $r = d(\lambda_1,\lambda_2)$ and split $X$ as the independent sum $X = X_r + \hat{X}_r$, where $X_r$ is the $\star$-scale approximation with covariance \eqref{eq:exactly_scaling_star_scale}. In particular we have $\E[\hat{X}_r(x) \hat{X}_r(y)] = 0$ when $|x-y| \ge r$ and thus the field $\hat{X}_r$ restricted to $\supp \psi_{\lambda_1}$ is independent of itself restricted to $\supp \psi_{\lambda_2}$ and hence,
\[S_2 = 2^{-j(2 + \beta^2)} \sum_{d(\lambda_1,\lambda_2) > c2^{-j}} \E[(\E[|\alpha(\lambda_1)| | \Fc_r] - \E[|\alpha(\lambda_1)|])(\E[|\alpha(\lambda_2)| | \Fc_r] - \E[|\alpha(\lambda_2)|])],\]
where $\Fc_r$ is the $\sigma$-algebra generated by $X_r$.

Note that for $m=1,2$ we have
\[\E[|\alpha(\lambda_m)| | \Fc_r] = \E\Big[\Big| \int :e^{i\beta X_r(x)}: :e^{i\beta \hat{X}_r(x)}: u(x) \psi_{\lambda_m}(x) \, dx \Big| | \Fc_r\Big].\]
The key thing to notice is that $X_r(x)$ will typically be almost like a constant in $\supp \psi_{\lambda_m}$ if $r$ is macroscopic and $j$ is big, and hence it makes sense to write
\begin{align*}
    \E[|\alpha(\lambda_m)| | \Fc_r] & = e^{\frac{\beta^2}{2} \E[X_r(x_0)^2]} \E\Big[\Big| \int :e^{i\beta \hat{X}_r(x)}: u(x) \psi_{\lambda_m}(x) \, dx \Big|\Big] \\
    & + \E\Big[\Big| \int :e^{i\beta X_r(x)}: :e^{i\beta \hat{X}_r(x)}: u(x) \psi_{\lambda_m}(x) \, dx \Big| | \Fc_r\Big] \\
    & - \E\Big[\Big| \int :e^{i\beta X_r(x_0)}: :e^{i\beta \hat{X}_r(x)}: u(x) \psi_{\lambda_m}(x) \, dx \Big| | \Fc_r\Big]
\end{align*}
and similarly
\begin{align*}
    \E[|\alpha(\lambda_m)|] & = e^{\frac{\beta^2}{2} \E[X_r(x_0)^2]} \E\Big[\Big| \int :e^{i\beta \hat{X}_r(x)}: u(x) \psi_{\lambda_m}(x) \, dx \Big|\Big] \\
    & + \E\Big[\Big| \int :e^{i\beta X_r(x)}: :e^{i\beta \hat{X}_r(x)}: u(x) \psi_{\lambda_m}(x) \, dx \Big|\Big] \\
    & - \E\Big[\Big| \int :e^{i\beta X_r(x_0)}: :e^{i\beta \hat{X}_r(x)}: u(x) \psi_{\lambda_m}(x) \, dx \Big|\Big].
\end{align*}
By the triangle inequality we then have
\begin{align*}
    |\E[|\alpha(\lambda_m)| | \Fc_r] - \E[|\alpha(\lambda_m)|]| & \le \E\Big[\Big| \int (:e^{i\beta X_r(x)}: - :e^{i\beta X_r(x_0)}:) :e^{i\beta \hat{X}_r(x)}: u(x) \psi_{\lambda_m}(x) \, dx \Big| | \Fc_r\Big]
    \\
    & + \E\Big[\Big| \int (:e^{i\beta X_r(x)}: - :e^{i\beta X_r(x_0)}:) :e^{i\beta \hat{X}_r(x)}: u(x) \psi_{\lambda_m}(x) \, dx \Big|\Big].
\end{align*}
By Cauchy--Schwarz and Jensen's inequality we get that
\begin{align*}
    & \E[(\E[|\alpha(\lambda_1)| | \Fc_r] - \E[|\alpha(\lambda_1)|])(\E[|\alpha(\lambda_2)| | \Fc_r] - \E[|\alpha(\lambda_2)|])] \\
    & \lesssim \prod_{m=1,2} \sqrt{\E\Big[\Big| \int (:e^{i\beta X_r(x)}: - :e^{i\beta X_r(x_0)}:) :e^{i\beta \hat{X}_r(x)}: u(x) \psi_{\lambda_m}(x) \, dx \Big|^2\Big]}.
\end{align*}
We next compute the remaining second moment, getting
\begin{align*}
    & \E\Big[\Big| \int (:e^{i\beta X_r(x)}: - :e^{i\beta X_r(x_0)}:) :e^{i\beta \hat{X}_r(x)}: u(x) \psi_{\lambda_m}(x) \, dx \Big|^2\Big] \\
    & = \int (e^{\beta^2 \E[X_r(x)X_r(y)]} - e^{\beta^2 \E[X_r(x) X_r(x_0)]} - e^{\beta^2 \E[X_r(y) X_r(x_0)]} + e^{\beta^2 \E[X_r(x_0)^2]}) \cdot \\
    & \quad e^{\beta^2 \E[\hat{X}_r(x) \hat{X}_r(y)]} u(x)u(y) \psi_{\lambda_m}(x) \psi_{\lambda_m}(y) \, dx \, dy.
\end{align*}
Noting that $|x-y| \le r$ and applying \eqref{eq:exactly_scaling_star_scale} the integral is less than
\[C \int \big|e^{-2\beta^2\sqrt{\frac{|x-y|}{r}}} - e^{-2\beta^2\sqrt{\frac{|x-x_0|}{r}}} - e^{-2\beta^2\sqrt{\frac{|y-x_0|}{r}}} + 1\big| |x-y|^{-\beta^2} \psi_{\lambda_m}(x) \psi_{\lambda_m}(y) \, dx \, dy.\]
By Taylor expansion, the terms inside the first absolute value are at most $C 2^{-j/2} r^{-1/2}$, while the rest of the integral is of order $2^{-j(2 - \beta^2)}$, giving an upper bound of $r^{-\frac{1}{2}} 2^{-j(\frac{5}{2}-\beta^2)}$.
Hence we see that
\[S_2 \lesssim 2^{-j(2 + \beta^2)} \sum_{d(\lambda_1,\lambda_2) > c 2^{-j}} r^{-\frac{1}{2}} 2^{-j(\frac{5}{2} - \beta^2)} \lesssim 2^{-\frac{9}{2}j} 2^{4j} \int_{c 2^{-j}}^R \sqrt{r} \, dr \lesssim 2^{-j/2},\]
which finishes the proof of this case.

\medskip

\textsc{Case 3}: $p=+\infty$. We want to show that $\mu \not\in B_{p,q,\loc}^{-\beta^2/2}(D)$ a.s. It is sufficient to prove that $\mu u \not\in B_{p,q}^{-\beta^2/2}(\R^2)$ a.s. for some specific choice of $u$.
Consider for instance some nonnegative $u \in C^\infty_c(D)$ such that $u = 1$ on some compact set $K \subset D$ whose interior is not empty.
Let $p' \in [1,\infty]$ and $q' \in [1, \infty ]$ be the conjugates of $p$ and $q$ given by
$\frac1p + \frac1{p'} = 1$
and
$\frac1q + \frac1{q'} = 1$.
By Besov duality (see e.g. \cite[Section 2.11]{Triebel}), we have for any $f \in B_{p',q'}^{\beta^2/2}(\R^2)$,
\begin{equation}
\label{E:Besov_duality}
\norme{\mu u}_{B_{p,q}^{-\beta^2/2}(\R^2)} \geq |\mu(u f)| / \norme{f}_{B_{p',q'}^{\beta^2/2}(\R^2)}.
\end{equation}
Since we currently consider the case $p=+\infty$, we will have $p' =1$.
Let $x_0$ be any given point in the interior of $K$ and $\delta>0$ be small enough so that $Q(x_0,2\delta) \subset K$.
Let $f \in C^\infty_c(\R^2)$ be a nonnegative function which is equal to 1 in $Q(0,1)$ and which vanishes outside of $Q(0,2)$. Define $f_\delta(x) = f((x-x_0)/\delta)$, $x \in \R^2$. We apply \eqref{E:Besov_duality} to $f_\delta$. 
We first claim that
\begin{equation}
\label{E:claim_Besov}
\norme{f_\delta}_{B_{p',q'}^{\beta^2/2}(\R^2)} \leq C \delta^{2/p'-\beta^2/2}.
\end{equation}
Let us first assume that \eqref{E:claim_Besov} holds and see how we conclude from this.
Recall that $p'=1$.
Since $u=1$ on the support of $f_\delta$, the duality estimate \eqref{E:Besov_duality} applied to $f_\delta$ yields
\[
\norme{\mu u}_{B_{p,q}^{-\beta^2/2}(\R^2)} \geq c \delta^{-2+\beta^2/2} |\mu( f_\delta)|. 
\]
Using a very similar approach to what we did in the proof of Theorem \ref{th:LIL}, we can show that
\[
\limsup_{\delta \to 0}
\delta^{-2+\beta^2/2} |\mu( f_\delta)| = + \infty
\qquad \text{a.s.}
\]
The difference with the setup of Theorem \ref{th:LIL} is that we are not integrating $\mu$ against the uniform measures of concentric small squares, but against scaled versions of the nonnegative smooth test function $f$. The same line of argument can be applied in this case as well.
This shows that $\norme{\mu u}_{B_{p,q}^{-\beta^2/2}(\R^2)} = +\infty$ a.s.
We see from this argument the special role played by $p=+\infty, p' = 1$. Because of the scaling $\delta^{2/p'}$ in \eqref{E:claim_Besov}, the same reasoning would not have worked for any other value of $p$.

It remains to show \eqref{E:claim_Besov}. It is very likely that this is a standard fact, but we will derive it for the reader's convenience. Recall that we denote by $\phi$ and $\psi$ the father and mother wavelets. For $j \geq 1$ and $\lambda \in \Lambda_j$, let us denote by
\[
\alpha_\delta(\lambda) = \int \psi_\lambda f_\delta.
\]
Let $j \geq 1$. The number of $\lambda \in \Lambda_j$ such that the supports of $\psi_\lambda$ and $f_\delta$ have a nonempty intersection is at most $2^{2j} \delta^2$. Let $\lambda \in \Lambda_j$ be such. Because the integral of $\psi_\lambda$ vanishes,
\[
|\alpha_\delta(\lambda)| = \abs{ \int \psi_\lambda (f_\delta - f_\delta(x_0)) } \leq \frac{\norme{f'}_\infty}{\delta} \int |\psi_\lambda(x)| \, |x-x_0| \,dx \leq C \frac{2^{-j}}{\delta} \int |\psi_\lambda| \leq C \frac{2^{-2j}}{\delta}.
\]
If $j$ is such that $2^{-j} \geq \delta$, we simply bound $|\alpha_\delta(\lambda)| \leq \norme{f}_\infty \norme{\psi_\lambda}_{L^1} \leq C 2^{-j}$. Overall, this shows that
\[
\Big( \sum_{\lambda \in \Lambda_j} |\alpha_\delta(\lambda)|^{p'} \Big)^{1/p'} \leq C 2^{j(2/p'-1)} \delta^{2/p'} \min( 2^{-j} \delta^{-1}, 1).
\]
We conclude that 
\begin{align*}
    & \Big\|\Big\{2^{j(1 - 2/p' + \beta^2/2)}\Big(\sum_{\lambda \in \Lambda_j} |\alpha_\delta(\lambda)|^{p'}\Big)^{1/p'}\Big\}_{j \ge 1}\Big\|_{\ell^{q'}} \\
    & \leq \Big( \delta^{2q'/p'} \sum_{j =1}^{\floor{|\log_2 \delta|}} 2^{j q' \beta^2/2} + \delta^{2q'/p'-q'} \sum_{j > \floor{|\log_2 \delta|}} 2^{-j q'(1- \beta^2/2)}
    \Big)^{1/q'} \leq C \delta^{2/p' - \beta^2/2}.
\end{align*}
One can similarly control the coefficients coming from the father wavelet $\phi$ to obtain the bound \eqref{E:claim_Besov}.
This concludes the proof of Theorem \ref{thm:regularity}.
\end{proof}

\section{Convergence to white noise: proof of Theorem \ref{thm:whitenoise2}}\label{S:WN}

Throughout the section, we assume without loss of generality that $[0,1]^2 + B(0,2) \subset D$.
Recall that, as defined in \eqref{E:Wr}, we consider for all $r \in (0,1)$ the process
\[W_r(z) \coloneqq r^{\beta^2 - 5}\big(|\mu(Q(z,r))|^2 - \E|\mu(Q(z,r))|^2\big), \quad z \in [0,1]^2.\]
The goal of this section is to prove the convergence of $W_r$ as $r \to 0$ to the white noise as stated in Theorem \ref{thm:whitenoise2}.
It is in fact technically more convenient to work with an integrated version of $W_r$ that is defined, for $(x,y) \in [0,1]^2$, by
\[B_r(x,y) \coloneqq \sqrt{A} \int_{[0,x]\times[0,y]} W_r(z) \, dz\]
where $A>0$ is the normalising constant whose inverse is defined in \eqref{E:def_A_symmetrization} below.
The next result states that $(B_r)_{r \in (0,1)}$ converges to the Brownian sheet:

\begin{theorem}\label{thm:whitenoise}
  The processes $(B_r)_{r \in (0,1)}$ converge in distribution in $C^0([0,1]^2)$, but not in probability, to the Brownian sheet in $[0,1]^2$ as $r \to 0$.
\end{theorem}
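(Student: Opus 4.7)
The overall plan is to establish convergence of the finite-dimensional marginals of $B_r$ to those of the Brownian sheet together with tightness in $C^0([0,1]^2)$, and then handle the non-convergence in probability separately via a joint-convergence argument across two scales. The main tool throughout is the spatial Markov property combined with the moment bounds of Lemma \ref{lemma:rectangle1}, which give $\mathrm{Var}(|\mu(Q(z,r))|^2) = O(r^{8-2\beta^2})$ and hence $\mathrm{Var}(W_r(z)) = O(r^{-2})$, i.e.\ $W_r$ behaves like a pre-white noise with correlation length of order $r$.

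The first and technically most delicate step is the identification of the limiting covariance. I would compute
\[
\mathrm{Cov}(B_r(x_1,y_1), B_r(x_2,y_2)) = A r^{2(\beta^2-5)} \iint_{R_1 \times R_2} \mathrm{Cov}(|\mu(Q(z,r))|^2,|\mu(Q(z',r))|^2)\, dz\, dz',
\]
with $R_i = [0,x_i] \times [0,y_i]$, and show it tends to $\min(x_1,x_2)\min(y_1,y_2)$ for the right normalisation $A$. For $|z-z'| \gg r$ one decomposes the GFF via spatial Markov in a super-square containing both $Q(z,r)$ and $Q(z',r)$, writing $\Gamma = \Gamma^{(R)} + h$ with $h$ harmonic; the crucial observation is that $|\mu(Q(z,r))|^2 = \mu(Q(z,r)) \overline{\mu(Q(z,r))}$, so the harmonic phase $e^{i\beta h}$ partially cancels inside the modulus. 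This is the ``symmetrization'' alluded to in the introduction: expanding the product forces cancellations among terms involving the harmonic extension, after which one can control the covariance quantitatively using Lemmas \ref{lem:delete harmonic} and \ref{lemma:rectangle1} to show that the rescaled kernel converges weakly to a multiple of $\delta_{z=z'}$. The constant $A$ is pinned down so that the resulting covariance matches the Brownian-sheet one.

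Next I would promote the covariance convergence to a Gaussian limit for finite-dimensional distributions via a Lindeberg-type central limit theorem. Partitioning the integration domain into cells of side length $r^{1-\varepsilon}$ and applying spatial Markov inside each cell, the contributions from distinct cells become independent modulo a small harmonic remainder controlled by Lemma \ref{lem:delete harmonic}; Lindeberg's condition then follows from fourth-moment bounds on $|\mu(Q(z,r))|^2$ coming once again from Lemma \ref{lemma:rectangle1}. Tightness in $C^0([0,1]^2)$ is obtained from a Kolmogorov--Centsov criterion: the increment $B_r(x,y) - B_r(x',y')$ is an integral of $W_r$ over an $L$-shaped region of area $O(\|(x,y)-(x',y')\|)$, and the same independence-of-far-cells analysis combined with the moment bound of Lemma \ref{lemma:rectangle1} yields $\E[|B_r(x,y)-B_r(x',y')|^{2p}] \le C_p \|(x,y)-(x',y')\|^p$ for all sufficiently large $p$.

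Finally, for the non-convergence in probability, I would show that $(B_{r_n}, B_{r_n^2})$ converges jointly in law to a pair $(\mathcal{B}^{(1)}, \mathcal{B}^{(2)})$ of \emph{independent} Brownian sheets. Indeed, after the Markov decomposition, the fluctuations of $W_{r_n}$ and $W_{r_n^2}$ depend on GFF degrees of freedom living on essentially disjoint scale ranges, so the two processes are asymptotically independent up to errors controlled by Lemma \ref{lem:delete harmonic}. If $B_r$ converged in probability to some limit $B$, then $B_{r_n}-B_{r_n^2} \to 0$ in probability, which contradicts joint convergence to a non-degenerate independent pair. The main obstacle in the whole programme is the precise covariance identification in the first step: the naive bounds are not sharp enough, and only after the symmetrization cancellation inside $|\mu|^2$ does one obtain both the exact delta-function structure and the correct constant $A$; once this is in place, the remaining steps are a fairly mechanical combination of the CLT, Kolmogorov--Centsov, and the Markov-decomposition machinery developed earlier in the paper.
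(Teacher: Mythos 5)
Your overall architecture (tightness plus identification of the limit, then a decorrelation argument for the failure of convergence in probability) matches the paper's, but you identify the limit via a Lindeberg CLT for finite-dimensional distributions, whereas the paper never proves a CLT: it shows that any subsequential limit is continuous, has $B(s,t)-B(s',t')$ independent of $B(s',t')$ (Lemma \ref{lemma:independence}) and variance $st$ (Lemma \ref{lemma:variance}), and then invokes the characterisation of the Brownian sheet from \cite{Zakai1981}, so Gaussianity comes for free. Similarly, for non-convergence in probability the paper does not prove joint convergence of two scales to an independent pair; it shows directly that $(B_r(1,1))_r$ is not Cauchy in $L^2$ (Lemma \ref{lemma:notcauchy}), combined with uniform integrability from the moment bound. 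Your route is legitimate in principle, but it is strictly heavier than what is needed, and as written it has concrete gaps.

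First, the moment estimates you treat as ``fairly mechanical'' are the technical heart. The bound $\E|B_r(z)-B_r(w)|^{2p}\lesssim \|z-w\|^p$ does not follow from Lemma \ref{lemma:rectangle1} plus approximate independence of far cells: one needs a bound on $\int_{U^N}|\E[W_r(z_1)\cdots W_r(z_N)]|$ of order $|U|^{N/2}$ uniformly in $r$, which requires tracking the cancellations coming from the centering of $W_r$; this is the paper's Lemma \ref{lemma:momentbound}, proved by a symmetrization (swapping $u_k\leftrightarrow v_k$, resp.\ $x\leftrightarrow y$, which turns a first-order error $\mathbf{r}-1$ of size $|z-w|^{-2}r^2$ into $(\mathbf{r}-1)^2/\mathbf{r}$ of size $|z-w|^{-4}r^4$) together with a nontrivial graph-integration scheme. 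Note also that this swap symmetrization is what the paper means by its symmetrization trick; it is not the cancellation of the harmonic phase inside the modulus, which is a separate (and also used) observation. Second, in your CLT step the harmonic remainder after the Markov decomposition into cells of side $r^{1-\varepsilon}$ is not controlled by Lemma \ref{lem:delete harmonic}: that lemma gives a pointwise-in-$z$ error of order $\lambda r^{2-\beta^2/2}$ with $\lambda\gtrsim r^{\varepsilon/2}$, hence an error of order $r^{\varepsilon/2-1}$ in $W_r(z)$, which blows up after integration over a macroscopic region; one needs second-moment/covariance-level cancellation for the remainder, exactly as in the proof of Lemma \ref{lemma:independence}. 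Third, your choice of scales $r_n$ and $r_n^2$ is too close for the available decorrelation estimate: the bound underlying Lemma \ref{lemma:notcauchy} gives $|\E[B_r(1,1)B_s(1,1)]|\lesssim \sqrt{r}(s+1/s)$, which is $O(1)$ when $r=s^2$, so you would need either a sharper cross-correlation bound or much more widely separated scales (the paper sidesteps this by keeping $s$ fixed and letting $r\to0$). With these repairs your plan could be carried out, but each of the three points above requires essentially the computations the paper performs, so the apparent shortcut is illusory.
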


Let us first see how the convergence of $(W_r)_{r \in (0,1)}$ follows:

\begin{proof}[Proof of Theorem \ref{thm:whitenoise2}, assuming Theorem \ref{thm:whitenoise}]
    By definition, for all $x,y \in [0,1]^2$, $W_r(x,y) = A^{-1/2} \partial_x \partial_y B_r(x,y)$. Because $(B_r)_{r \in (0,1)}$ converges in distribution in $C^0([0,1]^2)$, it also converges in distribution in $L^2([0,1]^2)$. By duality, it implies that $(W_r)_{r \in (0,1)}$ converges in distribution in $H^{-2}_0([0,1]^2)$ to $A^{-1/2}$ times the distributional derivative $\partial_x \partial_y$ of the Brownian sheet. The latter is a constant multiple of the white noise.
\end{proof}

The lemmas quoted in the following proof will be stated and proved in the subsequent sections (Sections \ref{SS:tightness}, \ref{SS:subsequential_limit} and \ref{SS:not_proba}).

\begin{proof}[Proof of Theorem \ref{thm:whitenoise}]
  By Lemma~\ref{lemma:tightness} we see that $(B_r)_{r \in (0,1)}$ is tight in $C([0,1]^2)$.
  To show convergence in distribution it is therefore enough to show the uniqueness of subsequential limits.
  Assume thus that $B_{r_k} \to B$ in $C([0,1]^2)$ for some sequence $r_k \to 0$. We note that by Lemma~\ref{lemma:independence} $B(s,t) - B(s',t')$ is independent of $B(s',t')$ whenever $s \ge s'$ and $t \ge t'$. Together with the variance estimate $\E B(s,t)^2 = st$ established in Lemma~\ref{lemma:variance} this is enough to characterise the Brownian sheet by \cite[Proposition~5.4]{Zakai1981}.

  If the convergence were true in probability, then in particular $B_r(1,1)$ would have to converge in probability, and indeed even in $L^2(\Omega)$ by the uniform integrability coming from the moment bound obtained in Lemma~\ref{lemma:momentbound}. This, however, is ruled out by Lemma~\ref{lemma:notcauchy}.
\end{proof}

Before stating and proving the intermediate lemmas mentioned in the previous proof, we would like to highlight a key feature of the proofs of this section.

\medskip

\textbf{Symmetrization trick.} In several places in this section, we will use ``symmetrization tricks'' that allow us to bound efficiently integrals with integrands whose signs are alternating. We give here a concrete simple example. For $x,y,u,v \in \R^2$, let
\[
\mathbf{r}(x,y,u,v) = \frac{|x-u|^{\beta^2} |y-v|^{\beta^2}}{|x-v|^{\beta^2} |y-u|^{\beta^2}}.
\]
Let us consider the following integral:
\begin{equation}\label{E:def_A_symmetrization}
    A^{-1} := \int_{\reals^2} dw \int_{Q(0,1)^2} dx \, dy \int_{Q(w,1)^2} du \, dv \frac{\mathbf{r}(x,y,u,v)-1}{|x-y|^{\beta^2} |u-v|^{\beta^2}}.
\end{equation}
 We wish to show that it is absolutely convergent as an integral over $w\in\reals^2$. The only possible source of divergence is when $|w|\to\infty$. Expanding $\mathbf{r}(x,y,u,v)-1$, we can show that for $x,y,u,v$ as in the integral above, $|\mathbf{r}(x,y,u,v)-1| =O(|w|^{-2})$ as $|w|\to\infty$. This estimate is not sufficient for the absolute integrability of the integral (we would get a logarithmic divergence), but we can exploit the symmetry of the integrand to show that it is in fact $O(|w|^{-4}$). Indeed, notice that swapping $x$ and $y$ changes $\mathbf{r}$ to $1/\mathbf{r}$: $\mathbf{r}(y,x,u,v) = 1/\mathbf{r}(x,y,u,v)$. Therefore, for all $w \in \R^2$,
\begin{align*}
f(w)
&:= \int_{Q(0,1)^2} dx \, dy \int_{Q(w,1)^2} du \, dv \frac{\mathbf{r}(x,y,u,v)-1}{|x-y|^{\beta^2} |u-v|^{\beta^2}} \\
& = \frac{1}{2} \int_{Q(0,1)^2} dx \, dy \int_{Q(w,1)^2} du \, dv \frac{1}{|x-y|^{\beta^2} |u-v|^{\beta^2}} \Big( \mathbf{r}(x,y,u,v) + \frac{1}{\mathbf{r}(x,y,u,v)} - 2 \Big).
\end{align*}
If $|w|$ is large enough and if we denote by $\mathbf{r}=\mathbf{r}(x,y,u,v)$ for $x,y,u,v$ as in the above integral, $\mathbf{r} + 1/\mathbf{r} - 2 = (\mathbf{r}-1)^2/\mathbf{r}$, so we get $f(w)=O(|w|^{-4})$ as claimed. The fact that the integral is positive also follows from the fact $(\mathbf{r}-1)^2/\mathbf{r}$ is nonnegative.


\subsection{Tightness of \texorpdfstring{$(B_r)_{r \in (0,1)}$}{B delta}}\label{SS:tightness}

In this section, we show the following tightness result:

\begin{lemma}\label{lemma:tightness}
The family $(B_r)_{r \in (0,1)}$ is tight in $C^0([0,1]^2)$.
\end{lemma}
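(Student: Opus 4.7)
The approach is to apply Kolmogorov's tightness criterion on the parameter space $[0,1]^2$. Since $B_r(0,0) = 0$ almost surely and each $B_r$ is pathwise continuous (the continuity of $z \mapsto \mu(Q(z,r))$ for fixed $r$ is provided by Proposition~\ref{prop:Holder regularity}), it suffices to establish a uniform moment bound
\[
\E |B_r(z) - B_r(z')|^{2N} \le C_N |z-z'|^N,
\qquad z, z' \in [0,1]^2,\ r \in (0,1),
\]
for some integer $N \ge 3$. For any such $z, z'$, the symmetric difference $[0,z] \triangle [0,z']$ is a union of at most two axis-parallel rectangles of area at most $2|z-z'|$, so $B_r(z) - B_r(z')$ is a signed sum of at most two integrals $\sqrt{A}\int_{R_i} W_r$ with $|R_i| \le 2|z-z'|$. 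By Minkowski's inequality, the task reduces to proving
\[
\E \Big| \int_R W_r(w)\, dw \Big|^{2N} \le C_N |R|^N
\]
uniformly in $r \in (0,1)$ and rectangles $R \subset [0,1]^2$.

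For this key bound, I would use Fubini to recast
\[
\int_R W_r(w)\, dw
= r^{\beta^2 - 5} \iint K_R(u,v)\, \big(\mu(u)\overline{\mu(v)} - \E[\mu(u)\overline{\mu(v)}]\big)\, du\, dv,
\]
where $K_R(u,v) = |R \cap Q(u,r) \cap Q(v,r)| \le (2r)^2$ is supported on $\{|u-v|_\infty \le 2r\}$. Expanding the $2N$-th power yields a $4N$-fold integral whose joint correlator, by Lemma~\ref{L:moment_JSW}, is an exponential of Green's functions; the $N$ centerings produce an alternating sum of such exponentials indexed by subsets $S \subset \{1,\dots,N\}$. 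The support constraint $|u_i - v_i|_\infty \le 2r$ already provides a factor $r^{4N}$, and combining it with Lemmas~\ref{L:bound_Ec}--\ref{L:integral} (or directly with Lemma~\ref{lemma:rectangle1}) yields a bound of order $|R|^{N/2}$, which falls short of the target by precisely $|R|^{N/2}$.

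The main obstacle, and the core technical content, is recovering this missing factor $|R|^{N/2}$ from the cancellations built into the $N$-fold centering. Following the symmetrization strategy illustrated after \eqref{E:def_A_symmetrization}, one symmetrizes the $4N$-fold integrand under independent swaps $u_i \leftrightarrow v_i$ within each of the $N$ blocks: such a swap inverts the corresponding ratio-of-Green's-functions factor $\mathbf{r}_i$ to $\mathbf{r}_i^{-1}$, so averaging converts a leading contribution $\mathbf{r}_i - 1$ into $(\mathbf{r}_i - 1)^2/(2 \mathbf{r}_i)$, upgrading a linear decay to a quadratic one in the inter-block separation and contributing one extra power per block. Since the inter-block separations range over a set of volume $|R|$, this extra decay precisely supplies the missing $|R|^{N/2}$. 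Combining the symmetrized estimate with the Onsager bound of Lemma~\ref{L:bound_Ec} to control the near-diagonal Green's-function singularities yields the desired moment estimate and thus the tightness of $(B_r)_{r \in (0,1)}$ in $C^0([0,1]^2)$.
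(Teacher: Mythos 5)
Your high-level skeleton (Kolmogorov's tightness criterion plus a uniform moment bound for $\int_U W_r$ over sets $U$ of small measure) is exactly the paper's route, but the step you defer to — the bound $\E\big|\int_R W_r\big|^{2N}\le C_N|R|^N$ uniformly in $r$ — is the entire content of the proof (it is the paper's Lemma~\ref{lemma:momentbound}), and your sketch of it does not hold up. First, your accounting is off: without exploiting the centering, the kernel bound $K_R\le (2r)^2$ together with the within-block singularity gives, per block, roughly $r^{\beta^2-5}\cdot|R|\,r^{4-\beta^2}\asymp |R|/r$, so the crude bound is of order $(|R|/r)^{2N}$, which diverges as $r\to0$; it is not "$|R|^{N/2}$, short by $|R|^{N/2}$". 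The cancellations must supply a factor of order $r/\sqrt{|R|}$ per block, and quantifying exactly this is where all the work lies.

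Second, the mechanism you invoke — symmetrizing $u_i\leftrightarrow v_i$ so that $\mathbf{r}_i-1$ becomes $(\mathbf{r}_i-1)^2/(2\mathbf{r}_i)$ — is the two-block (covariance) computation; for the $2N$-th moment the centered correlator is an alternating sum over subsets of blocks, not a product of pairwise factors, and the per-block swap alone does not produce quadratic couplings. In the paper one must Taylor-expand the symmetrized expression, use the vanishing of the alternating sums over $\tau$ and $\sigma$ to show that every block that is far (distance $\ge 4r$) from all others carries at least two couplings of size $r^2/\mathrm{dist}^2$ (Lemma~\ref{lemma:covariancebound}), and then run a non-trivial graph-integration scheme (the operations (1)--(5)) to convert "each far vertex has degree at least two'' into a gain of $r\sqrt{|R|}$ per such block. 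Moreover, blocks whose centres lie within $O(r)$ of one another admit no such decay at all; the paper treats them by the $\mathcal{D}_n$ decomposition, Onsager's inequality and a nearest-neighbour counting argument, yielding $r^n|R|^{n/2}$ for that part. Your proposal addresses neither the combinatorial cancellation beyond the pairwise case nor the clustered blocks, so as written it asserts rather than proves the key estimate.
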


Before proving this lemma, we first state and prove two intermediate results. The first one is a simple estimate on the Green function. The second one, on the other hand, provides a precise upper bound on the correlations of $W_r$ which is the main technical ingredient needed in the proof of Lemma \ref{lemma:tightness}. In both of these lemmas we need to bound integrals whose integrands do not have constant signs. The cancellations are important to take into account and we will do so using symmetrization tricks similar in flavour to the one below \eqref{E:def_A_symmetrization}.

\begin{lemma}\label{lemma:covariancebound}
  There exists $C > 0$ such that for any $z_1,w_1,z_2,w_2 \in [0,1]^2$ we have
  \[|G(z_1,z_2) + G(w_1,w_2) - G(z_1,w_2) - G(w_1,z_2)| \le C \frac{|z_1 - w_1| |z_2 - w_2|}{r^2},\]
  where $r = \min(|z_1 - z_2|, |z_1 - w_2|, |w_1 - z_2|, |w_1 - w_2|)$.
\end{lemma}

\begin{proof}
Let $r$ be as in the statement of lemma and denote by $\Gamma_r$ the mollification of $\Gamma$ with respect to some radially symmetric non-negative mollifier $\varphi$ whose support is included in the unit ball. Since the Green function is harmonic, we have
  \begin{align*}
    & |G(z_1,z_2) + G(w_1,w_2) - G(z_1,w_2) - G(w_1,z_2)| = |\E[(\Gamma(z_1) - \Gamma(w_1))(\Gamma(z_2) - \Gamma(w_2))]| \\
    & = |\E[(\Gamma_r(z_1) - \Gamma_r(w_1))(\Gamma_r(z_2) - \Gamma_r(w_2))]| 
    \le \sqrt{\E(\Gamma_r(z_1) - \Gamma_r(w_1))^2}\sqrt{\E(\Gamma_r(z_2) - \Gamma_r(w_2))^2}.
  \end{align*}
  By Jensen's inequality we further have
  \[|\Gamma_r(z) - \Gamma_r(w)|^2 \le |z-w|^2 \Big( \frac{1}{|z-w|} \int_0^{|z-w|} \Big| \nabla \Gamma_r\big(w + t\frac{z-w}{|z-w|}\big)\Big|^2 \, dt\Big).\]
  Lemma \ref{lemma:covariancebound} will then follow from the following bound: there exists $C>0$ such that for all $x \in [0,1]^2$ and $r \in (0,\sqrt{2})$,
  \begin{equation}
      \label{E:pf_lem_covariancebound}
      \E |\nabla \Gamma_r(x)|^2 \leq C r^{-2}.
  \end{equation}
  We now prove \eqref{E:pf_lem_covariancebound}. Let $x \in [0,1]^2$ and $r \in (0,\sqrt{2})$.
  We have
  \[
  \E |\partial_1 \Gamma_r(x)|^2
  = \int_{B(0,r)^2} G(x-y,x-z) \partial_1 \varphi_r(y) \partial_1 \varphi_r(z) dy dz.
  \]
  We want to bound the integral on the right hand side whose integrand does not have a constant sign. In order to take into account some cancellations, we will use the following \emph{symmetrization trick}.
  Since $\varphi$ is radially symmetric, $2 \E |\partial_1 \Gamma_r(x)|^2$ can be written as
  \begin{align}
  \label{E:symmetrization_trick}
      & \int_{B(0,r)^2} ( G(x-y,x-z) - G(x+y,x-z) ) \partial_1 \varphi_r(y) \partial_1 \varphi_r(z) dy dz \\
      & \leq r^{-6} \norme{\partial_1 \varphi}_\infty^2 \int_{B(0,r)^2} |G(x-y,x-z) - G(x+y,x-z)| dy dz. \nonumber
  \end{align}
  Recall that we can write $G(y,z) = -\log|y-z| + g(y,z)$ where $g$ is bounded in $[0,1]^2+B(0,\sqrt{2})$. Since
  \[
  \int_{B(0,r)^2} \abs{ \log \frac{|y+z|}{|y-z|}} dy dz = O(r^4)
  \]
  by scaling, this shows that $\E |\partial_1 \Gamma_r(x)|^2$ is bounded by $C r^{-2}$ for some $C>0$. One can obtain a similar bound for $\E |\partial_2 \Gamma_r(x)|^2$ concluding the proof of \eqref{E:pf_lem_covariancebound}. Note that without the symmetrization trick, we would have obtained an extra $|\log r|$ factor on the right hand side of \eqref{E:pf_lem_covariancebound}. This finishes the proof of the lemma.
\end{proof}

\begin{lemma}\label{lemma:momentbound}
  For any $N \ge 1$ there exists $C_N > 0$ such that for any measurable subset $U \subset [0,1]^2$ we have
  \begin{equation}\label{E:lemma_momentbound}
  \int_{U^N} |\E[W_r(z_1) \dots W_r(z_N)]| \, dz_1 \dots dz_N \le C_N |U|^{N/2}.
  \end{equation}
\end{lemma}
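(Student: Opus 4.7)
The plan is to expand the centered moment as a sum over graphs using the imaginary-chaos moment formula, apply the symmetrization trick described after \eqref{E:def_A_symmetrization} to extract the key cancellations, and then use Lemma \ref{lemma:covariancebound} to estimate the resulting geometric integral. Set $Q_i := Q(z_i,r)$, $A_i(x_i,y_i) := \mu(x_i)\overline{\mu(y_i)}$, $\bar A_i := \E A_i = e^{\beta^2 G'_D(x_i,y_i)}$, and
\[
F_{ij} := G'_D(x_i,y_j) + G'_D(x_j,y_i) - G'_D(x_i,x_j) - G'_D(y_i,y_j).
\]
Lemma \ref{L:moment_JSW} gives $\E\prod_{i\in S}A_i = \prod_{i\in S}\bar A_i \cdot \exp\bigl(\beta^2 \sum_{i<j\in S}F_{ij}\bigr)$ for every $S\subseteq[N]$. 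A direct inclusion–exclusion on $\prod_i(A_i-\bar A_i)$ then yields
\[
\E\prod_{i=1}^N\bigl(|\mu(Q_i)|^2 - \E|\mu(Q_i)|^2\bigr) = \int_{\prod_i Q_i^2}\prod_i \bar A_i \cdot \sum_G \prod_{e\in G}\bigl(e^{\beta^2 F_e}-1\bigr)\,d\mathbf{x}\,d\mathbf{y},
\]
where the sum ranges over graphs $G$ on the vertex set $[N]$ with no isolated vertex.

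Next I would symmetrize. Since $|\mu(Q_i)|^2 = \int_{Q_i^2}\mathrm{Re}(A_i)\,dx_i\,dy_i$ is invariant under the swap $x_i\leftrightarrow y_i$, and this swap flips the sign of every $F_{ij}$ incident to $i$, one may replace each $F_{ij}$ in the formula above by $\epsilon_i\epsilon_j F_{ij}$ and average over $\epsilon\in\{\pm1\}^N$. Taylor-expanding $e^{\beta^2\epsilon_e F_e}-1 = \sum_{k\ge 1}(\beta^2\epsilon_e F_e)^k/k!$ and summing over $\epsilon$ kills every monomial $\prod_e F_e^{k_e}$ in which some vertex $v$ has odd total weight $\sum_{e\ni v}k_e$. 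In particular, for any graph $G$, every edge incident only to odd-degree vertices must carry $k_e\ge 2$; by Lemma \ref{lemma:covariancebound}, such an edge then contributes $|F_e|^2\le C|x_i-y_i|^2|x_j-y_j|^2/d_{ij}^4$ instead of the naive $|F_e|\le C|x_i-y_i||x_j-y_j|/d_{ij}^2$. This is precisely the upgrade from $|w|^{-2}$ to $|w|^{-4}$ decay performed in the toy example surrounding \eqref{E:def_A_symmetrization}, and it is what eventually removes the logarithmic divergences that would otherwise arise from the edge integrations.

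Finally I would integrate over $U^N$ after distinguishing three regimes. In $(a)$ the well-separated regime $\{|z_i-z_j|\ge 10r \text{ for all }i\ne j\}$, Lemma \ref{lemma:covariancebound} applies with $d_{ij}\sim|z_i-z_j|$; integrating each $(x_i,y_i)$ over $Q_i^2$ against $\bar A_i|x_i-y_i|^{\deg_\pi(i)}$ yields $r^{4+\deg_\pi(i)-\beta^2}$ per vertex, so combined with the prefactor $r^{N(\beta^2-5)}$ one is left with $r^{-N+2\sum_e k_e}\prod_e(|z_{i_e}-z_{j_e}|\vee r)^{-2k_e}$; for the dominant (matching-type) graphs one has $\sum_e k_e = N$ and the $|z_i-z_j|^{-4}$ edge kernels integrate to exactly $|U|^{N/2}$, with more connected graphs producing strictly smaller powers of $|U|$. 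In $(b)$ the coincidence regime where some $|z_i-z_j|<10r$, one clusters nearby points into groups and bounds the contribution of each cluster using H\"older's inequality together with the moment bound $\E|W_r(z)|^N\le C_N r^{-N}$ supplied by Lemma \ref{lemma:rectangle1}; the small volume of the coincidence set then compensates for the $r^{-N}$ blowup. In $(c)$ the tiny regime $|U|\le r^2$, the same crude bound directly yields $|U|^N r^{-N}\le |U|^{N/2}$. The main obstacle is the combinatorial bookkeeping in regime $(a)$: one must verify graph by graph that the parity constraint introduced by the symmetrization provides exactly the powers of $|F_e|$ needed to cancel the logarithmic blowups that would otherwise arise from the $(|z_i-z_j|\vee r)^{-2}$ edge integrations — the direct analogue at general $N$ of the cancellation displayed in \eqref{E:def_A_symmetrization}.
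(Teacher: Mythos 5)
Your cumulant-type expansion over graphs without isolated vertices, and the symmetrization $x_i\leftrightarrow y_i$ over all $N$ indices that kills every monomial in which some vertex has odd total weight, are correct and are essentially the mechanism of the paper's proof (the paper symmetrizes only over the well-separated points, but the resulting constraint --- total incident weight at least two at every separated vertex --- is the same). Your regime $(a)$ is then precisely the paper's configuration with no clustered points, and the ``graph by graph'' bookkeeping you defer is exactly the paper's minimality reduction together with its integration operations; two small imprecisions there: the parity constraint bears on the total weighted degree of each vertex rather than on individual $k_e$ (a triangle with all $k_e=1$ survives), and more connected graphs need not give \emph{strictly} smaller powers of $|U|$ (the unit-weight triangle gives exactly $|U|^{3/2}$), though they are still admissible.

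The genuine gap is regime $(b)$. When only \emph{some} of the points are within $10r$ of each other and the remaining points are far away, the crude bound $|\E[W_r(z_1)\cdots W_r(z_N)]|\le C_N r^{-N}$ combined with the volume of the coincidence set is not sufficient: for $N=3$ with $|z_1-z_2|<10r$ and $z_3$ unconstrained, H\"older gives $r^{-3}$, the constraint set has measure $\lesssim r^2|U|^2$, and the product $r^{-1}|U|^2$ is not $O(|U|^{3/2})$ when $|U|\asymp 1$ and $r\to0$; your regime $(c)$ only rescues the case $|U|\lesssim r^2$. The point is that H\"older discards the centering of the separated factor $W_r(z_3)$: the actual size of the mixed moment is of order $r^{-3}\cdot r^4(|z_3-z_1|\vee r)^{-4}$ (the separated vertex must carry weight at least two towards the cluster), and it is this decay in the separated variable --- integrating to a factor $r\sqrt{|U|}$ --- rather than the volume of the coincidence set that saves the estimate. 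Recovering it forces you to keep the separated points inside the expansion while treating the clustered ones differently, which is exactly what the paper does: Onsager's inequality for the intra-cluster factors, symmetrization and the weight-two constraint for the separated points only, and the nearest-neighbour volume count (giving $r^n|U|^{n/2}$) for the clustered points. As written, your regime $(b)$ step fails, and repairing it amounts to redoing that mixed-case analysis rather than a volume argument.
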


\begin{proof}
  We can split the integral \eqref{E:lemma_momentbound} in several parts depending on which points are further than $4r$ away from their closest neighbours. Without loss of generality it is therefore enough to check that for any $0 \le n \le N$ we have
  \[\int_{\mathcal{D}_n} |\E[W_r(z_1) \dots W_r(z_n) W_r(w_1) \dots W_r(w_{N-n})]| \lesssim |U|^{N/2},\]
  where
  \begin{align*}\mathcal{D}_n & = \{(z_1,\dots,z_n,w_1,\dots,w_{N-n}) \in U^N : |w_j - z_k| \ge 4r \text{ for all } j,k \\ & \wedge |w_j - w_k| \ge 4r \text{ for all } j \neq k \\
  & \wedge \text{for all } j \text{ we have } |z_j - z_k| \le 4r \text{ for some } k \neq j\}.
  \end{align*}
  Recalling the notation $G_D'$ defined in \eqref{E:def_GD'}, we compute
  \begin{align*}
    & |\E[W_r(z_1) \dots W_r(z_n) W_r(w_1) \dots W_r(w_{N-n})]| \\
    & = r^{N\beta^2 - 5N} \Big|\E \int \prod_{j=1}^n \big(\mu(x_j) \overline{\mu(y_j)} - e^{\beta^2 G'(x_j,y_j)}\big) \prod_{j=1}^{N-n} \big(\mu(u_j)\overline{\mu(v_j)} - e^{\beta^2 G'(u_j,v_j)}\big)\Big|,
  \end{align*}
  where the integral is over $x_j,y_j \in Q(z_j,r)$ and $u_k,v_k \in Q(w_k,r)$ with $1 \le j \le n$, $1 \le k \le N-n$.
  Recall the notation $\Ec'(\Gamma;\cdot;\cdot)$ defined in \eqref{E:defEcal'} and the formula \eqref{E:moment_iGFF}. In the display below, we denote by $\xf_\pi = (x_j)_{j \in \pi}$ and similar notations for $\xf_\tau$, $\yf_\pi, \yf_\tau$.
  Expanding the two products and computing the expected value gives
  \begin{align*}
    & r^{N\beta^2 - 5N} \Big|\int \sum_{\pi \subset \{1,\dots,n\}} \sum_{\tau \subset \{1,\dots,N-n\}} \E \Big[\prod_{j \in \pi} (\mu(x_j) \overline{\mu(y_j)}) \prod_{j \notin \pi} (-e^{\beta^2 G'(x_j,y_j)}) \\
    & \quad \cdot \prod_{j \in \tau} (\mu(u_j) \overline{\mu(v_j)}) \prod_{j \notin \tau}(-e^{\beta^2 G'(u_j,v_j)})\Big]\Big| \\
    & = r^{N\beta^2 - 5N} \Big|\int \sum_{\pi \subset \{1,\dots,n\}} e^{\beta^2 \mathcal{E}'(\mathbf{x}_\pi,\mathbf{y}_\pi)} \prod_{j \notin \pi} e^{\beta^2 G'(x_j,y_j)} \sum_{\tau \subset \{1,\dots,N-n\}} (-1)^{|\pi| + |\tau|}e^{\beta^2 \mathcal{E}'(\mathbf{u}_\tau,\mathbf{v}_\tau)} \\
    & \quad \cdot e^{\beta^2 \sum_{j \in \pi,k \in \tau} (G'(x_j,v_k) + G'(y_j,u_k) - G'(x_j,u_k) - G'(y_j,v_k))} \prod_{j \notin \tau} e^{\beta^2 G'(u_j,v_j)}\Big|.
  \end{align*}
  Note next that
  \begin{align*}
    & \mathcal{E}'(\mathbf{u}_\tau,\mathbf{v}_\tau) + \sum_{j \in \pi, k \in \tau} (G'(x_j,v_k) + G'(y_j,u_k) - G'(x_j,u_k) - G'(y_j,v_k)) + \sum_{j \notin \tau} G'(u_j,v_j) \\
    & = \sum_{j=1}^{N-n} G'(u_j,v_j) + \sum_{j,k \in \tau, j < k} \Delta_{j,k} + \sum_{j \in \pi, k \in \tau} \widetilde{\Delta}_{j,k},
  \end{align*}
  where
  \begin{align*}
    \widetilde{\Delta}_{j,k} & \coloneqq G'(x_j,v_k) + G'(y_j,u_k) - G'(x_j,u_k) - G'(y_j,v_k) \quad \text{for all } 1 \le j \le n, 1 \le k \le N - n \\
    \Delta_{j,k} & \coloneqq G'(u_j,v_k) + G'(u_k,v_j) - G'(u_j,u_k) - G'(v_j,v_k) \quad \text{for all } 1 \le j < k \le N-n.
  \end{align*}
  Thus we have
  \begin{align*}
    & |\E[W_r(z_1) \dots W_r(z_n) W_r(w_1) \dots W_r(w_{N-n})]| \\
    & = r^{N\beta^2 - 5N} \Big|\int \sum_{\pi \subset \{1,\dots,n\}} (-1)^{|\pi|}e^{\beta^2 \mathcal{E}'(\mathbf{x}_\pi,\mathbf{y}_\pi) + \beta^2 \sum_{j \notin \pi} G'(x_j,y_j)} e^{\beta^2 \sum_{j=1}^{N-n} G'(u_j,v_j)} \\
    & \quad \cdot \sum_{\tau \subset \{1,\dots,N-n\}} (-1)^{|\tau|}e^{\beta^2 \sum_{j,k \in \tau, j < k} \Delta_{j,k} + \beta^2 \sum_{j \in \pi, k \in \tau} \widetilde{\Delta}_{j,k}}\Big|.
  \end{align*}
  We now use once more a symmetrization trick by observing that the integrand is symmetric upon swapping the variables $u_k \leftrightarrow v_k$. Moreover doing the swap simply switches the sign of $\Delta_{j,k}$ and $\widetilde{\Delta}_{j,k}$. Hence we can write the above as
  \begin{align*}
    & |\E[W_r(z_1) \dots W_r(z_n) W_r(w_1) \dots W_r(w_{N-n})]| \\
    & = 2^{-(N-n)} r^{N\beta^2 - 5N} \Big|\int \sum_{\pi \subset \{1,\dots,n\}} (-1)^{|\pi|}e^{\beta^2 \mathcal{E}'(\mathbf{x}_\pi,\mathbf{y}_\pi) + \beta^2 \sum_{j \notin \pi} G'(x_j,y_j)} e^{\beta^2 \sum_{j=1}^{N-n} G'(u_j,v_j)} \\
    & \quad \cdot \sum_{\sigma \in \{0,1\}^{N-n}}\sum_{\tau \subset \{1,\dots,N-n\}} (-1)^{|\tau|}e^{\beta^2 \sum_{j,k \in \tau, j < k} (-1)^{\sigma_j + \sigma_k} \Delta_{j,k} + \beta^2 \sum_{j \in \pi, k \in \tau} (-1)^{\sigma_k} \widetilde{\Delta}_{j,k}}\Big|.
  \end{align*}
  We will next focus on estimating the two inner sums in terms of the variables $\Delta_{j,k},\widetilde{\Delta}_{j,k}$, which by Lemma~\ref{lemma:covariancebound} are bounded by $C \frac{r^2}{|w_j - w_k|^2} \le R$ (resp. by $C \frac{r^2}{|z_j - w_k|^2} \le R$) for some $R > 0$. To this end we define the analytic function
  \begin{align*}
    & f((\Delta_{j,k}), (\widetilde{\Delta}_{j,k})) \coloneqq \\
    & \sum_{\sigma \in \{0,1\}^{N-n}}\sum_{\tau \subset \{1,\dots,N-n\}} (-1)^{|\tau|}e^{\beta^2 \sum_{j,k \in \tau, j < k} (-1)^{\sigma_j + \sigma_k} \Delta_{j,k} + \beta^2 \sum_{j \in \pi, k \in \tau} (-1)^{\sigma_k} \widetilde{\Delta}_{j,k}}
  \end{align*}
  and consider its Taylor series around $0$. We have by the multinomial theorem that
  \begin{align*}
    & f((\Delta_{j,k}), (\widetilde{\Delta}_{j',k'})) \\
    & = \sum_{\ell = 0}^\infty \sum_{\sigma \in \{0,1\}^{N-n}} \sum_{\tau \subset \{1,\dots,N-n\}} (-1)^{|\tau|}\frac{\beta^{2\ell}}{\ell!} \big(\sum_{j,k} (-1)^{\sigma_j + \sigma_k} \Delta_{j,k} + \sum_{j', k'} (-1)^{\sigma_{k'}} \widetilde{\Delta}_{j',k'}\big)^\ell \\
    & = \sum_{\ell = 0}^\infty \sum_{\sigma \in \{0,1\}^{N-n}} \sum_{\tau \subset \{1,\dots,N-n\}} \sum_{\sum_{j,k} p_{j,k} + \sum_{j',k'} q_{j',k'} = \ell} \frac{(-1)^{|\tau|} \beta^{2 \ell}}{\ell!}{\ell \choose (p_{j,k})_{j,k}, (q_{j',k'})_{j',k'}} \\
    & \quad \cdot \prod_{j,k} (-1)^{p_{j,k}(\sigma_j + \sigma_k)} \Delta_{j,k}^{p_{j,k}} \prod_{j',k'} (-1)^{q_{j',k'} \sigma_{k'}}\widetilde{\Delta}_{j',k'}^{q_{j,k}},
  \end{align*}
  where the sums and products and index sets of variables are over $j,k \in \tau, j < k$ and $j' \in \pi, k' \in \tau$. We may equivalently rephrase this condition by saying that the variables $p_{j,k}$ and $q_{j',k'}$ range over the whole index set with $1 \le j < k \le N-n$ and $1 \le j' \le n$, $1 \le k' \le N-n$, that their sum is $\ell$ and with the extra condition that $p_{j,k} = 0$ if one of $j,k$ is not in $\tau$ and similarly $q_{j',k'} = 0$ if $j' \notin \pi$ or $k' \notin \tau$. Exchanging the two inner most sums then gives us
  \begin{align*}
    & \sum_{\ell = 0}^\infty \sum_{\sigma \in \{0,1\}^{N-n}} \sum_{\sum_{j,k} p_{j,k} + \sum_{j',k'} q_{j',k'} = \ell} \sum_{\substack{\tau \subset \{1,\dots,N-n\}\\ p_{j,k} \neq 0 \Rightarrow j,k \in \tau \\ q_{j',k'} \neq 0 \Rightarrow k' \in \tau}} \frac{(-1)^{|\tau|} \beta^{2 \ell}}{\ell!}{\ell \choose (p_{j,k})_{j,k}, (q_{j',k'})_{j',k'}} \\
    & \quad \cdot \prod_{j,k} (-1)^{p_{j,k}(\sigma_j + \sigma_k)} \Delta_{j,k}^{p_{j,k}} \prod_{j',k'} (-1)^{q_{j',k'}\sigma_{k'}}\widetilde{\Delta}_{j',k'}^{q_{j,k}} \\
    & = \sum_{\ell = 0}^\infty \sum_{\sigma \in \{0,1\}^{N-n}} \sum_{\substack{\sum_{j,k} p_{j,k} + \sum_{j',k'} q_{j',k'} = \ell \\ \forall 1 \le r \le N-n \text{ exists non-zero } p_{j,r}, p_{r,k} \text{ or } q_{j',r}}}\frac{(-1)^{N-n} \beta^{2 \ell}}{\ell!}{\ell \choose (p_{j,k})_{j,k}, (q_{j',k'})_{j',k'}} \\
    & \quad \cdot \prod_{j,k} (-1)^{p_{j,k}(\sigma_j + \sigma_k)} \Delta_{j,k}^{p_{j,k}} \prod_{j',k'} (-1)^{q_{j',k'}\sigma_{k'}}\widetilde{\Delta}_{j',k'}^{q_{j,k}},
  \end{align*}
  where we have used the fact that if the indices $j,k,k'$ of those $p_{j,k}$, $q_{j',k'}$ that are non-zero span a set $S \subsetneq \{1,\dots,N-n\}$, then the sum over $\tau$ can be written as a sum over the subsets of $\{1,\dots,N-n\} \setminus S$ and for any non-empty finite set $A$ we have $\sum_{\tau \subset A} (-1)^{|\tau|} = 0$.

  We note next that the sum over $\sigma$ will be $0$ unless every $\sigma_j$ appears at least twice in the two products (indeed, if $\sigma_j$ appears only once, the terms corresponding to $\sigma_j=0$ and $\sigma_j=1$ would be opposite of each other). Because the series is absolutely summable this means that we have the bound
  \[|f((\Delta_{j,k}), (\widetilde{\Delta}_{j',k'}))| \le C \sum_{p_{j,k}, q_{j',k'}} \prod_{j,k} \frac{r^{2 p_{j,k}}}{|w_j - w_k|^{2 p_{j,k}}} \prod_{j',k'} \frac{r^{2 q_{j,k}}}{|z_{j'} - w_{k'}|^{2q_{j,k}}},\]
  where $C > 0$ is some constant that depends on $N$ and $R$ and the sum is over such $p_{j,k}$ and $q_{j',k'}$ for which
  \begin{itemize}
    \item for all $1 \le r \le N-n$ we have that there exist at least two different pairs $(j,r),(r,k)$ or $(j',r)$ such that the corresponding variables are non-zero, and
    \item the variables $p_{j,k}$ and $q_{j',k'}$ are minimal in the sense that the above condition is not anymore satisfied after decreasing any of the variables $p_{j,k}$ and $q_{j',k'}$ by one.
  \end{itemize}
  One easily checks that there are only finitely many such configurations $p_{j,k}$, $q_{j',k'}$. In light of the above bound we see that it is enough to check that
  \begin{align*}
    & r^{N \beta^2 - 5N} \int_{\mathcal{D}_n} \int e^{\beta^2 \mathcal{E}'(\mathbf{x}_\pi, \mathbf{y}_\pi) + \beta^2 \sum_{j \notin \pi} G'(x_j,y_j)} e^{\beta^2 \sum_{j=1}^{N-n} G'(u_j,v_j)} \\
    & \quad \cdot \prod_{j,k} \frac{r^{2 p_{j,k}}}{|w_j - w_k|^{2 p_{j,k}}} \prod_{j',k'} \frac{r^{2 q_{j',k'}}}{|z_{j'} - w_{k'}|^{2q_{j',k'}}} \lesssim |U|^{N/2}
  \end{align*}
   where we have fixed $\pi \subset \{1,\dots,n\}$ and a configuration $p_{j,k},q_{j',k'}$ satisfying the conditions given above. As a reminder, here the first integral is over those points $z_1,\dots,z_n,w_1,\dots,w_{N-n} \in U^N$ that satisfy the nearest neighbour distance requirements given in the beginning and the second integral is again over $x_j,y_j \in Q(z_j,r)$ and $u_j,v_j \in Q(w_j,r)$. The integral over $u_j,v_j$ is $O(r^{(N-n)(4 - \beta^2)})$ and the integral over those $x_j,y_j$ for which $j \notin \pi$ is similarly $O(r^{(n - |\pi|)(4 - \beta^2)})$. The integral over those $x_j,y_j$ for which $j \in \pi$ is by a standard application of Onsager's inequality and scaling $O(r^{|\pi|(4 - \beta^2)})$. Altogether we hence have an upper boundd
  \[r^{-N} \int_{\mathcal{D}_n} \prod_{j,k} \frac{r^{2 p_{j,k}}}{|w_j - w_k|^{2 p_{j,k}}} \prod_{j',k'} \frac{r^{2 q_{j',k'}}}{|z_{j'} - w_{k'}|^{2q_{j',k'}}}.\]
  We will next bound the integrals over the $w_j$. It is helpful to view the dependencies as a graph with vertices $z_1,\dots,z_n,w_1,\dots,w_{N-n}$ such that there are $p_{j,k}$ edges between $w_j$ and $w_k$ and $q_{j',k'}$ edges between $z_{j'}$ and $w_{k'}$. The minimality condition of the configuration implies that for each of the edges one of the endpoints must have degree at most $2$. We will then proceed integrating over all of the vertices of the graph using the following operations:
  \begin{enumerate}
    \item Pick a vertex $w_j$ with degree $2$ that has a double edge to its neighbour $u$. Integrate $\frac{r^4 \1_{|w_j - u| \ge 4r}}{|w_j - u|^4}$ over $w_j$ to obtain an upper bound of order $r^2$ and remove $w_j$ from the graph.
    \item Pick a vertex $w_j$ with degree $2$ that has a double edge to its neighbour $w_k$. Integrate $\frac{r^4 \1_{|w_j - u| \ge 4r}}{|w_j - u|^4}$ over $w_j$ and use Jensen's inequality to obtain an upper bound of order $r \sqrt{|U|}$. Remove $w_j$ from the graph.
    \item Pick a vertex $w_j$ with degree $2$ that has two neighbours $u$ and $v$ and which is such that removing $w_j$ does not increase the number of connected components of the graph when restricted to the $w$-vertices. Use the AM-GM inequality to replace the term $\frac{r^4}{|w_j - u|^2 |w_j - v|^2}$ with the term $\frac{r^4}{|w_j - u|^4} + \frac{r^4}{|w_j - v|^4}$, effectively replacing the graph with two new graphs where instead of two single edges $w_j$ now has a double edge between either $u$ or $v$.
    \item Pick a vertex $w_j$ with degree $1$ and neighbour $u$. Integrate $\frac{r^2 \1_{|w_j - w_k|}}{|w_j - u|^2}$ over $u$ and use Jensen's inequality to obtain an upper bound of order $r \sqrt{|U|}$. Remove $w_j$ from the graph.
    \item Pick a vertex $w_j$ with degree $0$. Integrating over $w_j$ gives a bound of order $|U|$. Remove $w_j$ from the graph.
  \end{enumerate}
  We will now exhaust all the vertices $w_j$ in the graph by using the above operations in the following manner: Start by picking a connected component of the graph. If this connected component contains any $z$-vertices, we simply use operations (2), (3) and (4), thereby obtaining an upper bound of $(r \sqrt{|U|})^m$, where $m$ is the number of vertices $w_j$ in the connected component. If the component does not contain any $z$-vertices, we will use one operation (1) (possibly after using (3) first) and then do operations (2), (3) and (4) until only a degree $0$-vertex remains so that we finish with one operation (5). The operations (1) and (5) together give us $r^2 |U|$ and the rest of the operations give $(r \sqrt{|U|})^{m-2}$, so we again get $(r \sqrt{|U|})^m$ in total. Taking the product over all the components gives an upper bound of order $r^{N-n} |U|^{\frac{N-n}{2}}$.

  Finally we will integrate over the $z_j$ under the constraint that every $z_j$ has another $x_k$ no more than $4r$ away from it. Here one may split the integral according to the nearest neighbour function as in the proof of \cite[Lemma 3.10]{JSW}. Fixing a nearest neighbour function, recall that the nearest neighbour graph of $z_j$ consists of components which are built from two trees whose roots are joined in a 2-cycle. We then get for each such component in a straightforward manner an upper bound of order $\min(r^2,|U|)^{m-2} r^2 |U|$, where $m \ge 2$ is the number of vertices in the component. Since $\min(r^2,|U|) \le r \sqrt{|U|}$, we in fact have an upper bound of $r^m |U|^{\frac{m}{2}}$ for the given component and taking the product over all the components gives an upper bound of order $r^n |U|^{\frac{n}{2}}$, which finishes the proof.
\end{proof}

Now that we have proved Lemma \ref{lemma:momentbound}, 
Lemma \ref{lemma:tightness} will follow quickly:

\begin{proof}[Proof of Lemma \ref{lemma:tightness}]
  By Kolmogorov's tightness criterion (see e.g. \cite[Theorem 2.1.6]{MR2190038}) it is enough to check e.g. that there exists a constant $C > 0$ such that
  \[\E |B_r(z) - B_r(w)|^6 \le C |z-w|^3\]
  for all $z,w \in [0,1]^2$ and $r > 0$. This on the other hand follows from Lemma~\ref{lemma:momentbound} since
  \[\E |B_r(z) - B_r(w)|^6 \le A^3 \int_{U^6} |\E[W_r(z_1) \dots W_r(z_6)]| dz_1\dots dz_6 \lesssim |U|^3,\]
  where $A$ is defined in \eqref{E:def_A_symmetrization} and
  where $U$ is the symmetric difference between $[0,\Re(z)]\times[0,\Im(z)]$ and $[0,\Re(w)] \times [0,\Im(w)]$ and thus $|U| \lesssim |z-w|$.
\end{proof}

\subsection{Study of subsequential limits}\label{SS:subsequential_limit}

Thanks to Lemma \ref{lemma:tightness}, we now know that $(B_r)_{r \in (0,1)}$ is tight. In this section, we will study the properties of its subsequential limits. We will show in Lemma \ref{lemma:independence} the independence of their increments and compute in Lemma \ref{lemma:variance} the value of the variance.

\begin{lemma}\label{lemma:independence}
  Let $s \ge s'$ and $t \ge t'$ and let $B$ be a subsequential limit in distribution of $(B_r)_r$. Then $B(s,t) - B(s',t')$ is independent of $B(s',t')$.
\end{lemma}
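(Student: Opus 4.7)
\emph{Setup.} Write $L := [0,s]\times[0,t]\setminus[0,s']\times[0,t']$ and $R' := [0,s']\times[0,t']$, so that $X_r := B_r(s,t) - B_r(s',t') = \sqrt{A}\int_L W_r$ and $Y_r := B_r(s',t') = \sqrt{A}\int_{R'} W_r$. By tightness (Lemma \ref{lemma:tightness}) and passing to a further subsequence along which $(X_{r_k}, Y_{r_k})$ converges jointly in distribution, it suffices to prove that for every pair of bounded continuous $f, g : \R \to \R$,
\begin{equation*}
\E[f(X_r)\, g(Y_r)] - \E[f(X_r)]\, \E[g(Y_r)] \to 0 \quad \text{as } r \to 0.
\end{equation*}

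\emph{Truncation and Markov decomposition.} Fix $\rho > 0$ (to be sent to $0$ at the end), and let $L_\rho \subset L$ (resp.\ $R'_\rho \subset R'$) be the subset of points at distance at least $\rho$ from the common boundary $\partial L \cap \partial R'$. Let $X_r^\rho$, $Y_r^\rho$ be the corresponding truncated integrals. Lemma \ref{lemma:momentbound} with $N = 2$ and $U = L \setminus L_\rho$ yields
\begin{equation*}
\E |X_r - X_r^\rho|^2 \leq C\, |L \setminus L_\rho| \leq C' \rho,
\end{equation*}
uniformly in $r$, and similarly on the $R'$-side. So, up to an $L^2$ error of order $\sqrt{\rho}$, one may work with $(X_r^\rho, Y_r^\rho)$. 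For $r < \rho / 10$, all squares $Q(z,r)$ with $z \in L_\rho$ lie inside some open set $U_L \subset D$ at positive distance from an analogous neighbourhood $U_R \subset D$ of the squares over $R'_\rho$. Applying the spatial Markov property of $\Gamma$ on $U_L \cup U_R$ together with the disjointness of $U_L, U_R$, I write $\Gamma = \Gamma_L^0 + \Gamma_R^0 + h$, where $\Gamma_L^0, \Gamma_R^0$ are independent zero-boundary GFFs on $U_L, U_R$ respectively, $h$ is harmonic on $U_L \cup U_R$, and the three fields are mutually independent. Denoting the associated imaginary chaoses by $\mu_L, \mu_R$ and replacing $\mu$ by $\mu_L$ (resp.\ $\mu_R$) in the definition of $X_r^\rho$ (resp.\ $Y_r^\rho$) produces $\tilde X_r^\rho, \tilde Y_r^\rho$, which are independent by construction.

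\emph{Approximation in $L^2$ and conclusion.} It remains to prove that, for each fixed $\rho > 0$, $X_r^\rho - \tilde X_r^\rho \to 0$ and $Y_r^\rho - \tilde Y_r^\rho \to 0$ in $L^2$ as $r \to 0$. Using that $\mu(y) = e^{i \beta h(y)} \mu_L(y)$ for $y \in U_L$, one has
\begin{equation*}
|\mu(Q(z,r))|^2 - |\mu_L(Q(z,r))|^2 = \int_{Q(z,r)^2} \bigl( e^{i \beta (h(y_1) - h(y_2))} - 1 \bigr)\, \mu_L(y_1)\, \overline{\mu_L(y_2)}\, dy_1\, dy_2,
\end{equation*}
and the prefactor $e^{i \beta (h(y_1) - h(y_2))} - 1$ is pointwise of size $|y_1 - y_2|/\rho$, with Gaussian-type moments controlled by the harmonic variance estimate \eqref{eq:proof_harmonic_covariance} (applied with $R \gtrsim \rho$). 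This additional smallness should allow a variant of the second-moment expansion of Lemma \ref{lemma:momentbound} to go through, and sending $r \to 0$ first and then $\rho \to 0$ finishes the argument. The main obstacle is precisely this second-moment estimate: a naive pointwise bound on the integrand, combined with the normalisation $r^{\beta^2 - 5}$ and integration over a macroscopic region, diverges as $r \to 0$; one has to exploit the cancellations from the two centerings, from the small harmonic increment $h(y_1) - h(y_2)$, and from symmetrisation/Taylor expansion of the prefactor, in the spirit of the arguments already used in Lemmas \ref{lemma:covariancebound} and \ref{lemma:momentbound}.
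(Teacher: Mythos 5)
Your overall strategy is the same as the paper's: use the spatial Markov property to replace the chaos of $\Gamma$ by the chaos of a zero-boundary field on a region well separated from the other rectangle, obtain independence by construction, and then argue that the replacement is negligible in $L^2$ uniformly enough to pass to the limit. (The paper implements this slightly more economically: it shrinks only the small rectangle to $[0,s'-\eps]\times[0,t'-\eps]$ and decomposes $\Gamma=\widetilde\Gamma+h$ on a slightly larger rectangle $\widetilde D$, so that for $r<\eps/2$ the increment $B_r(s,t)-B_r(s',t')$ only sees $\Gamma$ outside $\widetilde D$ and is \emph{automatically} independent of the modified quantity $\widetilde B_r$; only one $L^2$ comparison is then needed, and the $\eps\to0$ step is absorbed by continuity of the limit $B$. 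Your two-sided truncation near the interface, justified by Lemma~\ref{lemma:momentbound} with $N=2$, is a legitimate variant but forces you to do the comparison on both sides.)

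The genuine gap is that the heart of the lemma — the proof that $\E|X_r^\rho-\tilde X_r^\rho|^2\to0$ (and likewise for $Y$) — is not carried out; you state it as the ``main obstacle'' and only gesture at the cancellations. This is precisely where essentially all of the paper's proof lives, and the naive route you sketch does fail quantitatively: writing $\E|Y_r^\rho-\tilde Y_r^\rho|^2=A\int dz\,dw\,\E[(W_r(z)-\tilde W_r(z))(W_r(w)-\tilde W_r(w))]$, one needs (i) a bound on the integrand that is \emph{uniform} in $z,w,r$, and (ii) pointwise decay to $0$ for $z\neq w$, after which dominated convergence concludes. For (i), the estimate $|e^{i\beta(h(y_1)-h(y_2))}-1|\approx\sqrt{r/\rho}$ (note: the variance bound \eqref{eq:proof_harmonic_covariance} gives standard deviation $\sqrt{r/\rho}$, not $r/\rho$) only yields $\E(W_r(z)-\tilde W_r(z))^2\lesssim r^{-1}/\rho$, which is not bounded against the normalisation $r^{\beta^2-5}$; the paper's computation shows that after expanding the exponentials the \emph{first-order} harmonic terms cancel exactly, $\E[(h(x)-h(y))(h(u)-h(v))]=O(r^2)$, leaving an $O(r^2)$ correction and hence a uniformly bounded variance. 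For (ii), the mixed covariance $\E[W_r(z)\tilde W_r(w)]$ is not centred in an obviously exploitable way; the paper handles it with the $x\leftrightarrow y$ symmetrization, which turns the integrand into $e^{-\frac{\beta^2}{2}\E[(h(x)-h(y))^2]}(p+p^{-1}-2)\geq0$ and shows the mixed term is dominated by the pure term, giving the $r^2/|z-w|^4$ decay. Neither of these two steps is supplied in your proposal, and without them the argument does not close. A smaller omission: your displayed identity for $|\mu(Q(z,r))|^2-|\mu_L(Q(z,r))|^2$ drops the difference of the two centerings $\E|\mu(Q(z,r))|^2-\E|\mu_L(Q(z,r))|^2$, which also has to be tracked in the second-moment computation.
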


\begin{proof}
  We first note that by continuity it is enough to check that for any $0 < \varepsilon < \min(s',t')$ we have $(B(s,t) - B(s',t')) \bot B(s'-\varepsilon,t'-\varepsilon)$. Let us fix such $\varepsilon$ and set $a \coloneqq s'-\varepsilon$, $b \coloneqq t'-\varepsilon$. Let us decompose our GFF as $\Gamma = \widetilde{\Gamma} + h$, where $\widetilde{\Gamma}$ is a zero-boundary GFF in $\widetilde D:= [0,s'-\frac{\varepsilon}{2}]\times[0,t'-\frac{\varepsilon}{2}]$ and $h$ is harmonic in $\widetilde D$ and independent of $\widetilde{\Gamma}$. We then define for all $r > 0$ the random variables
  \[\widetilde{B}_r \coloneqq \sqrt{A} \int_{[0,a] \times [0,b]} \tilde{h}_r(z) \, dz,\]
  where where $A$ is defined in \eqref{E:def_A_symmetrization} and
  \[\tilde{h}_r(z) \coloneqq r^{\beta^2 - 5} \Big(\big|\int_{Q(z,r)} e^{i\beta\widetilde{\Gamma}(x)} \, dx\big|^2 - \E\big|\int_{Q(z,r)} e^{i\beta\widetilde{\Gamma}(x)} \, dx\big|^2\Big).\]
 Notice that $\widetilde{B}_r$ is independent of $B_r(s,t) - B_r(s',t')$ for all $r < \varepsilon/2$. We will therefore be done if we can show that $\E |\widetilde{B}_r - B_r(a,b)|^2 \to 0$ as $r \to 0$.
 In this proof, we will write for $x,y,u,v \in \widetilde D$,
 \[
 \widetilde G'(x,y) = \E[\widetilde \Gamma(x) \widetilde \Gamma(y)] - \frac12 \log \CR(x, \widetilde D) - \frac12 \log \CR(y,\widetilde D)
 \]
 and 
 \[
 \widetilde \Ec'(x,u;y,v) = \widetilde G'(x,y) + \widetilde G'(x,v) + \widetilde G'(u,y) + \widetilde G'(u,v) - \widetilde G'(x,u) - \widetilde G'(y,v).
 \]

 We have
 \[\E |\widetilde{B}_r - B_r(a,b)|^2 = A \int_{([0,a] \times [0,b])^2} dz \, dw \E [(\tilde{h}_r(z) - W_r(z))(\tilde{h}_r(w) - W_r(w))].\]
 We will be done by dominated convergence theorem if we can show that $\E [(\tilde{h}_r(z) - W_r(z))(\tilde{h}_r(w) - W_r(w))]$ is bounded and that for $z \neq w$ it tends to $0$ as $r \to 0$.

 We will start by showing the second claim. Here it is enough to consider the four terms $\E \tilde{h}_r(z) \tilde{h}_r(w)$, $\E \tilde{h}_r(z) W_r(w)$, $\E W_r(z) \tilde{h}_r(w)$ and $\E W_r(z) W_r(w)$ separately. By looking at the proof of Lemma~\ref{lemma:momentbound} we see that $\E \tilde{h}_r(z) \tilde{h}_r(w)$ and $\E W_r(z) W_r(w)$ will for small enough $r$ have an upper bound of the form $\frac{r^2}{|z-w|^4}$. The mixed case $\E W_r(z) \tilde{h}_r(w)$ can be reduced to the $\E \tilde{h}_r(z) \tilde{h}_r(w)$ case as follows: We have
 \begin{align*}
   & \E W_r(z) \tilde{h}_r(w)
   = r^{2\beta^2 - 10} \int_{Q(z,r)^2} dx\,dy \int_{Q(w,r)^2} du\,dv \E\big[(e^{i\beta\Gamma(x)} e^{-i\beta \Gamma(y)} - e^{\beta^2 G'(x,y)})\\
   & \hspace{70pt} \cdot (e^{i\beta\widetilde{\Gamma}(u)} e^{-i\beta\widetilde{\Gamma}(v)} - e^{\beta^2 \widetilde{G}'(u,v)})\big] \\
   & = r^{2\beta^2 - 10} \int_{Q(z,r)^2} dx\,dy \int_{Q(w,r)^2} du\,dv e^{-\frac{\beta^2}{2} \E[ (h(x)-h(y))^2 ]} \E \big[(e^{i\beta\widetilde{\Gamma}(x)} e^{-i\beta \widetilde{\Gamma}(y)} - e^{\beta^2 \widetilde{G}'(x,y)})\\
   & \quad \cdot (e^{i\beta\widetilde{\Gamma}(u)} e^{-i\beta\widetilde{\Gamma}(v)} - e^{\beta^2 \widetilde{G}'(u,v)})\big] \\
   & = r^{2\beta^2 - 10} \int_{Q(z,r)^2} dx\,dy \int_{Q(w,r)^2} du\,dv e^{- \frac{\beta^2}{2} \E[(h(x)-h(y))^2]} e^{\beta^2 \widetilde{G}'(x,y) + \beta^2\widetilde{G}'(u,v)} \\
   & \quad \cdot \big(e^{\beta^2 \widetilde{G}'(x,v) + \beta^2 \widetilde{G}'(y,u) - \beta^2 \widetilde{G}'(x,u) - \beta^2 \widetilde{G}'(y,v)} - 1\big).
 \end{align*}
 Notice that the integral won't change if we swap $x$ and $y$ in the integrand. Thus the above equals
 \begin{align*}
   & \frac{1}{2} r^{2\beta^2 - 10} \int_{Q(z,r)^2} dx\,dy \int_{Q(w,r)^2} du\,dv e^{-\frac{\beta^2}{2} \E[(h(x)-h(y))^2]} e^{\beta^2 \widetilde{G}'(x,y) + \beta^2\widetilde{G}'(u,v)} \\
   & \quad \cdot \big(p(x,y,u,v) + p(x,y,u,v)^{-1} - 2\big),
 \end{align*}
 where
 \[p(x,y,u,v) = e^{\beta^2 \widetilde{G}'(x,v) + \beta^2 \widetilde{G}'(y,u) - \beta^2 \widetilde{G}'(x,u) - \beta^2 \widetilde{G}'(y,v)}.\]
 For any real number $r > 0$ we have $r + r^{-1} - 2 \ge 0$ and $\E[(h(x)-h(y))^2]$ is uniformly bounded in $[0,a] \times [0,b]$, so we see that
 \[\E W_r(z) \tilde{h}_r(w) \lesssim \E \tilde{h}_r(z) \tilde{h}_r(w).\]

 It remains to show the boundedness of $\E [(\tilde{h}_r(z) - W_r(z))(\tilde{h}_r(w) - W_r(w))]$. By Cauchy--Schwarz it is enough to show that $\E (\tilde{h}_r(z) - W_r(z))^2$ is bounded. A direct but somewhat tedious computation shows that $\E (\tilde{h}_r(z) - W_r(z))^2$ is equal to
 \begin{align*}
   r^{2\beta^2 - 10} \int_{Q(z,r)^2} dx \, dy \int_{Q(z,r)^2} du \, dv (e^{\beta^2 \widetilde{\mathcal{E}}'(x,u;y,v)} \mathfrak{A} + e^{\beta^2 \widetilde{G}'(u,v) + \beta^2 \widetilde{G}'(x,y)} \mathfrak{B})
 \end{align*}
 where
 \begin{align*}
   & \mathfrak{A} = 1 - e^{-\frac{\beta^2}{2} \E[(h(x)-h(y))^2]} - e^{\frac{\beta^2}{2} \E[(h(u)-h(v))^2]} \\
     & + e^{-\frac{\beta^2}{2} \E[(h(x)-h(u))^2] + \frac{\beta^2}{2} \E[(h(y)-h(v))2] - \frac{\beta^2}{2} \E[(h(x)-h(y))^2] - \frac{\beta^2}{2} \E[(h(u)-h(v))2] - \frac{\beta^2}{2} \E[(h(x)-h(v))^2] - \frac{\beta^2}{2} \E[(h(y)-h(u))^2]}, \\
   \mathfrak{B} & = e^{-\frac{\beta^2}{2} \E[(h(x)-h(y))^2]} + e^{-\frac{\beta^2}{2} \E[(h(u)h(v))^2]}  - e^{-\frac{\beta^2}{2} \E[(h(x)-h(y))^2] - \frac{\beta^2}{2} \E[(h(u)-h(v))^2]} - 1.
 \end{align*}

 One easily checks that both in $\mathfrak{A}$ and $\mathfrak{B}$ all the exponents are $O(r)$. Thus by expanding the exponentials as series we see that $\mathfrak{B} = O(r^2)$. For $\mathfrak{A}$ we get
 \[\beta^2 \E h(x)h(v) + \beta^2 \E h(y)h(u) -\beta^2 \E h(x)h(u) - \beta^2 \E h(y)h(v) + O(r^2),\]
 but one also easily checks that
 \begin{align*}
   & \E h(x)h(v) + \E h(y)h(u) - \E h(x)h(u) - \E h(y)h(v)  = -\E (h(x) - h(y))(h(u) - h(v)) = O(r^2),
 \end{align*}
 so in total $\E (\tilde{h}_r(z) - W_r(z))^2$ is bounded by a constant times
 \begin{align*}
   r^{2\beta^2 - 8} \int_{B(z,r)^2} dx \, dy \int_{B(w,r)^2} du \, dv (e^{\beta^2 \widetilde{\mathcal{E}'}(x,y;u,v)} + e^{\beta^2 \widetilde{G}'(x,y) + \beta^2 \widetilde{G}'(u,v)}) \lesssim 1.
 \end{align*}
\end{proof}

\begin{lemma}\label{lemma:variance}
  Let $B$ be a subsequential limit in distribution of $B_r$. Then $\E B(s,t)^2 = st$.
\end{lemma}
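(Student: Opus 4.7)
The plan is to show that $\lim_{r \to 0} \E B_r(s,t)^2 = st$ and then transfer this limit to $B$ using uniform integrability. By Lemma~\ref{lemma:momentbound} applied with $N = 4$ and $U = [0,s]\times[0,t]$, one obtains $\sup_r \E B_r(s,t)^4 < \infty$, so $\{B_r(s,t)^2\}_r$ is uniformly integrable, and along any subsequence $r_k \to 0$ with $B_{r_k} \to B$ in distribution we have $\E B(s,t)^2 = \lim_k \E B_{r_k}(s,t)^2$.

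For the limit, I would write
\begin{equation*}
\E B_r(s,t)^2 = A \int_{U^2} \E[W_r(z_1) W_r(z_2)]\,dz_1\,dz_2,
\end{equation*}
expand the covariance $\mathrm{Cov}(|\mu(Q(z_1,r))|^2, |\mu(Q(z_2,r))|^2)$ via Lemma~\ref{L:moment_JSW} as
\begin{equation*}
\int_{Q_1^2 \times Q_2^2} e^{\beta^2(G'(x_1,y_1)+G'(x_2,y_2))}\left(e^{\beta^2(G'(x_1,y_2)+G'(x_2,y_1)-G'(x_1,x_2)-G'(y_1,y_2))}-1\right) dx_1\,dy_1\,dx_2\,dy_2
\end{equation*}
(with $Q_i := Q(z_i, r)$), and then perform the change of variables $z_1 = z_2 + rw$, together with the rescalings $(x_2,y_2) = z_2 + r(x,y)$ and $(x_1,y_1) = z_2 + r(u,v)$, so that $(x,y) \in Q(0,1)^2$ and $(u,v) \in Q(w,1)^2$. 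Using $G'(a,b) = -\log|a-b| + O(1)$, the inner exponential difference becomes $\log \mathbf{r}(x,y,u,v) + O(r)$ and the prefactor becomes $r^{-2\beta^2}|u-v|^{-\beta^2}|x-y|^{-\beta^2}(1+O(r))$; together with the Jacobian $r^{10}$ and the external factor $r^{2\beta^2-10}$ coming from the two $W_r$'s, the powers of $r$ cancel exactly. The pointwise limit of the rescaled integrand is therefore precisely the integrand appearing in the definition~\eqref{E:def_A_symmetrization} of $A^{-1}$.

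I would then conclude via dominated convergence. For each $z_2$ in the interior of $U$, the $w$-integration region $(U - z_2)/r$ fills out $\R^2$ as $r \to 0$, and the inner integral converges to $A^{-1}$. A dominating function with $O(|w|^{-4})$ decay at infinity arises from the same $x \leftrightarrow y$ symmetrization that establishes the finiteness of $A^{-1}$; making this dominating function uniform in $r$ requires the quantitative Green function control provided by Lemma~\ref{lemma:covariancebound} (to keep the $O(r)$ error terms tame). The contribution from $z_2$ within distance $O(r)$ of $\partial U$ has outer Lebesgue measure $O(r)$ and is negligible, so integrating over $z_2 \in U$ yields
\begin{equation*}
\lim_{r \to 0} \E B_r(s,t)^2 = A \cdot |U| \cdot A^{-1} = st,
\end{equation*}
which combined with the uniform integrability argument gives $\E B(s,t)^2 = st$.

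The main obstacle will be the construction of the uniform-in-$r$ dominating function in the inner integral: the rescaled integrand is signed and only absolutely integrable thanks to the symmetrization trick, so one cannot rely on the bare covariance bound and must propagate the cancellation structure through the Green function asymptotics, using Lemma~\ref{lemma:covariancebound} in essentially the same spirit as in the proof of Lemma~\ref{lemma:momentbound}.
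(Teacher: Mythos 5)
Your proposal is correct and follows essentially the same route as the paper's proof: expand the covariance of $|\mu(Q(\cdot,r))|^2$, rescale so that the integrand converges to the integrand defining $A^{-1}$ in \eqref{E:def_A_symmetrization}, and use the $x\leftrightarrow y$ symmetrization together with the Green-function increment bound of Lemma~\ref{lemma:covariancebound} to control the signed integrand and identify the limit, giving $A\cdot st\cdot A^{-1}=st$. The differences are only organizational: the paper replaces the smooth Green-function corrections by their diagonal values (the error is in fact $O(r^2)$, not just $O(r)$) and splits off the region $|z-w|\ge r^{1/4}$ instead of exhibiting a uniform-in-$r$ dominating function, and it leaves the uniform-integrability transfer to the subsequential limit implicit (it is invoked in the proof of Theorem~\ref{thm:whitenoise}), which you spell out explicitly.
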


\begin{proof}
  Let $U = [0,s] \times [0,t]$. In this proof, we will denote by
  \[
  \mathbf{r}(x,y,u,v) = \frac{|x-u|^{\beta^2} |y-v|^{\beta^2}}{|x-v|^{\beta^2} |y-u|^{\beta^2}}
  \]
  and
  \[
  g'(x,y) = G(x,y) + \log|x-y| - \frac12 \log \CR(x,D) - \frac12 \log \CR(y,D),
  \]
  for $x,y,u,v \in D$. $g'$ is smooth and vanishes on the diagonal. We will use several times the fact that swapping $x$ and $y$ changes $r$ into $1/r$.
  A direct computation shows that
  \begin{align*}
    \E B_r(s,t)^2 & = A \int_{U^2} dz\,dw \,r^{2\beta^2 - 10} \int_{Q(z,r)^2} dx\,dy \int_{Q(w,r)^2} du\,dv \frac{e^{\beta^2 (g'(x,y) + g'(u,v))}}{|x-y|^{\beta^2} |u-v|^{\beta^2}} \\
    & \quad \cdot \big(\mathbf{r}(x,y,u,v) e^{\beta^2 (g'(x,v) + g'(y,u) - g'(x,u) - g'(y,v))} - 1\big).
  \end{align*}
  It is easy to see that in the limit as $r \to 0$ we may replace $g'(x,y) + g'(u,v)$ by $g'(z,z) + g'(w,w) = 0$ since the difference between the two is $O(r)$ and the rest of the integral is bounded by arguments similar to Lemma~\ref{lemma:momentbound}. Doing this substitution and by applying the symmetry upon swapping $x$ and $y$ (see the paragraph following \eqref{E:def_A_symmetrization} for more details) we get
  \begin{align*}
    & \frac{A}{2} \int_{U^2} dz \, dw \, r^{2\beta^2 - 10} \int_{Q(z,r)^2} dx \, dy \int_{Q(w,r)^2} du \, dv \frac{1}{|x-y|^{\beta^2}|u-v|^{\beta^2}} \frac{((1+a)\mathbf{r}(x,y,u,v) - 1)^2}{(1+a)\mathbf{r}(x,y,u,v)},
  \end{align*}
  where $a = e^{\beta^2(g'(x,v) + g'(y,u) - g'(x,u) - g'(y,v))} - 1 = O(r^2)$. Writing $\mathbf{r}=\mathbf{r}(x,y,u,v)$, we note that
  \begin{align*}
    \frac{(\mathbf{r}(1+a) - 1)^2}{\mathbf{r}(1+a)}
    & = \frac{(\mathbf{r}-1)^2}{\mathbf{r}} - \frac{(\mathbf{r}-1)^2a}{\mathbf{r}(1+a)} + \frac{2a(\mathbf{r}-1)}{1+a} + \frac{\mathbf{r} a^2}{1+a}.
  \end{align*}
  As $a = O(r^2)$ and $|\mathbf{r}-1| \lesssim \frac{r^2}{|z-w|^2}$, we see that only the first term $\frac{(\mathbf{r}-1)^2}{\mathbf{r}}$ will remain in the limit. Undoing the symmetry trick, we thus have reduced the task to finding the limit of $A \int_{U^2} dz \, dw u_r(z,w)$, where
  \begin{align*}
    & u_r(z,w) \coloneqq  r^{2\beta^2 - 10} \int_{Q(z,r)^2} dx \, dy \int_{Q(w,r)^2} du \, dv \frac{\mathbf{r}(x,y,u,v)-1}{|x-y|^{\beta^2}|u-v|^{\beta^2}}.
  \end{align*}
  We note that by the above considerations $|u_r(z,w)| = O(r)$ when $|z-w| \ge r^{1/4}$. We may therefore restrict our focus to the case where $|z-w| < r^{1/4}$. Let us consider a fixed $z$ and do the integral over $w$. By translation we may assume that $z = 0$. We have by scaling
  \begin{align}
  \notag
    & \int_{Q(0,r^{1/4})} dw r^{2\beta^2 - 10} \int_{Q(0,r)^2} dx \, dy \int_{Q(w,r)^2} du \, dv \frac{\mathbf{r}(x,y,u,v)-1}{|x-y|^{\beta^2} |u-v|^{\beta^2}} \\
    \notag
    & = \int_{Q(0,r^{-3/4})} dw \int_{Q(0,1)^2} dx \, dy \int_{Q(w,1)^2} du \, dv \frac{\mathbf{r}(x,y,u,v)-1}{|x-y|^{\beta^2} |u-v|^{\beta^2}}.
    \end{align}
As we explained in the paragraph following \eqref{E:def_A_symmetrization}, the integral w.r.t. $w$ converges absolutely as $r\to0$ due to symmetry properties. Therefore, the right hand side of the above display converges as $r \to 0$ to
    \begin{align*}
    \int_{\reals^2} dw \int_{Q(0,1)^2} dx \, dy \int_{Q(w,1)^2} du \, dv \frac{\mathbf{r}(x,y,u,v)-1}{|x-y|^{\beta^2} |u-v|^{\beta^2}}.
  \end{align*}
  This integral agrees with the inverse of the normalising constant $A$; see \eqref{E:def_A_symmetrization}. The integral over $z$ then gives $|U| = st$ as required.
\end{proof}

\subsection{The convergence does not hold in probability}\label{SS:not_proba}

\begin{lemma}\label{lemma:notcauchy}
  The sequence $(B_r(1,1))_{r \in (0,1)}$ is not Cauchy in $L^2(\Omega)$ as $r \to 0$.
\end{lemma}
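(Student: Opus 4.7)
The plan is to argue by contradiction. Assume $(B_{r}(1,1))_{r\in(0,1)}$ is Cauchy in $L^{2}(\Omega)$, so that it converges in $L^{2}$ to some $X$. By Theorem~\ref{thm:whitenoise} combined with Lemma~\ref{lemma:variance} applied to $(s,t)=(1,1)$, one must have $X\sim\mathcal{N}(0,1)$; in particular $\E X^{2}=1$ and for any two sequences $r,r'\to 0$,
\begin{equation*}
\E[B_{r}(1,1)\,B_{r'}(1,1)]\ \xrightarrow[r,r'\to 0]{}\ 1. \qquad (\star)
\end{equation*}
To refute $(\star)$, I will compute the covariance along $r'=\lambda r$ for fixed $\lambda\in(0,1)$, and show that the resulting limit tends to $0$ as $\lambda\to 0$.

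The first step is to evaluate $J(\lambda):=\lim_{r\to 0}\E[B_{r}(1,1)B_{\lambda r}(1,1)]$ by mirroring the proof of Lemma~\ref{lemma:variance}, the only change being that $u,v$ now range over $Q(w,\lambda r)$ instead of $Q(w,r)$. Performing the substitutions $x=z+r\tilde x$, $y=z+r\tilde y$, $u=w+\lambda r\tilde u$, $v=w+\lambda r\tilde v$ with $p=(w-z)/r$, tracking the powers of $r$ and $\lambda$, and using the $\tilde x\leftrightarrow\tilde y$ symmetrization (cf.~the paragraph following \eqref{E:def_A_symmetrization}) to absorb the lower-order terms coming from $e^{\beta^{2}g'(x,y)}$ and its analogues, I expect to obtain
\begin{equation*}
J(\lambda)\ =\ A\,\lambda^{-1}\int_{\reals^{2}}dp\int_{Q(0,1)^{4}}\frac{\mathbf r_{\lambda}(p;\tilde x,\tilde y,\tilde u,\tilde v)-1}{|\tilde x-\tilde y|^{\beta^{2}}|\tilde u-\tilde v|^{\beta^{2}}}\,d\tilde x\,d\tilde y\,d\tilde u\,d\tilde v,
\end{equation*}
where $\mathbf r_{\lambda}(p;\tilde x,\tilde y,\tilde u,\tilde v):=\bigl|\tilde x-p-\lambda\tilde u\bigr|^{\beta^{2}}\bigl|\tilde y-p-\lambda\tilde v\bigr|^{\beta^{2}}\big/\bigl(\bigl|\tilde x-p-\lambda\tilde v\bigr|^{\beta^{2}}\bigl|\tilde y-p-\lambda\tilde u\bigr|^{\beta^{2}}\bigr)$. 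Absolute convergence of the $p$-integral follows from the same symmetrization as in the paragraph after \eqref{E:def_A_symmetrization}, and the consistency check $J(1)=A\cdot A^{-1}=1$ matches Lemma~\ref{lemma:variance} upon comparing with \eqref{E:def_A_symmetrization}.

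The second step is to show $J(\lambda)\to 0$ as $\lambda\to 0$. After the $\tilde x\leftrightarrow\tilde y$ symmetrization, which flips $\mathbf r_\lambda$ to $1/\mathbf r_\lambda$, the integrand becomes the nonnegative $(\mathbf r_{\lambda}-1)^{2}\bigl/\bigl(2\mathbf r_{\lambda}|\tilde x-\tilde y|^{\beta^{2}}|\tilde u-\tilde v|^{\beta^{2}}\bigr)$, and a first-order Taylor expansion yields $\mathbf r_{\lambda}-1=\beta^{2}\lambda(\tilde v-\tilde u)\cdot\bigl[(\tilde x-p)/|\tilde x-p|^{2}-(\tilde y-p)/|\tilde y-p|^{2}\bigr]+O(\lambda^{2})$. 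In the far zone $\{|p|\ge 1\}$ the bracket is $O(|\tilde x-\tilde y|/|p|^{2})$, which together with $\beta^{2}<2$ yields an $O(\lambda^{2})$ contribution after integrating $|\tilde u-\tilde v|^{2-\beta^{2}}$ and $|\tilde x-\tilde y|^{2-\beta^{2}}$ over $Q(0,1)^{2}$ and using $\int_{|p|\ge 1}|p|^{-4}dp<\infty$. In the close zone $\{|p-\tilde x|\le\lambda\}$ (and symmetrically near $\tilde y$), a local rescaling $p=\tilde x+\lambda P$ shows the contribution is $O(\lambda^{2})$; the intermediate zone $\{\lambda\le|p-\tilde x|\le 1\}$ contributes $O(\lambda^{2}\log(1/\lambda))$. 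Summing, the inner integral is $O(\lambda^{2}\log(1/\lambda))$, so $J(\lambda)=O(\lambda\log(1/\lambda))\to 0$, contradicting $(\star)$. I expect the main technical obstacle to be the rigorous derivation of the formula for $J(\lambda)$, since it requires discarding the error terms coming from $e^{\beta^{2}g'}-1$ uniformly in $r$---essentially a two-scale version of the bookkeeping already carried out in the proof of Lemma~\ref{lemma:variance}.
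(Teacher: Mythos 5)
Your argument is correct in outline, but it takes a genuinely different route from the paper. The paper keeps one scale \emph{fixed}: for $0<r<s$ with $s$ fixed it shows directly that $\int_{([0,1]^2)^2}\E[W_r(z)W_s(w)]\,dz\,dw\to 0$ as $r\to0$, by splitting into $|z-w|<r^{1/4}$ (where $G'(x,v)-G'(y,v)=O(r)$ gives $|\E[W_r(z)W_s(w)]|\lesssim s^{-1}$ on a set of measure $O(\sqrt r)$) and $|z-w|\ge r^{1/4}$ (where Lemma~\ref{lemma:covariancebound} and the $x\leftrightarrow y$ symmetrization give $|\E[W_r(z)W_s(w)]|\lesssim rs|z-w|^{-4}$); no limiting object has to be identified, and non-Cauchyness follows at once. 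You instead couple the scales proportionally, $r'=\lambda r$, compute the scaling limit $J(\lambda)$ of the cross-covariance --- a two-scale rerun of the computation in Lemma~\ref{lemma:variance}; your formula, the power counting giving the $\lambda^{-1}$ prefactor, and the consistency check $J(1)=1$ are all correct --- and then show $J(\lambda)=O(\lambda\log(1/\lambda))$ via the first-order expansion of $\mathbf r_\lambda$ and the zone decomposition in $p$, which is also sound (the symmetrized integrand $(\mathbf r_\lambda-1)^2/\mathbf r_\lambda$ is what makes the far zone $O(\lambda^2)$ and the $p$-integral absolutely convergent). What each approach buys: yours gives a quantitative decorrelation rate between scales $r$ and $\lambda r$, but at the price of redoing the uniform-in-$r$ bookkeeping of Lemma~\ref{lemma:variance} in a two-scale setting --- precisely the step you leave as ``expected'' --- whereas the paper's fixed-$s$ argument needs only the already-proved Lemma~\ref{lemma:covariancebound} and crude bounds, and is a few lines long. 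Two minor remarks: you do not need Theorem~\ref{thm:whitenoise} (citing it invites an apparent circularity, since its proof quotes the present lemma for the ``not in probability'' clause); Cauchyness in $L^2$ together with $\E B_r(1,1)^2\to1$, which is exactly what the proof of Lemma~\ref{lemma:variance} establishes, already yields your display $(\star)$ by polarization. And to conclude it suffices to fix a single $\lambda$ with $J(\lambda)<1$, so no diagonal extraction is needed.
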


\begin{proof}
  Let $0 < r < s$. We have
  \begin{align*}
    & \E |B_r(1,1) - B_s(1,1)|^2 
    = A \int_{([0,1]^2)^2} dz \, dw  \E[(W_r(z) - W_s(z))(W_r(w) - W_s(w))],
  \end{align*}
  and thus we see that it is enough to show that for fixed $s$ we have
  \[\lim_{r \to 0} \int_{([0,1]^2)^2} dz \, dw  \E[W_r(z) W_s(w)] = 0.\]
  The expectation $\E[W_r(z) W_s(w)]$ is equal to
  \begin{align*}
     r^{\beta^2 - 5} s^{\beta^2 - 5} \int_{Q(z,r)^2} dx\,dy\int_{Q(w,s)^2} du\,dv  e^{\beta^2 G'(x,y) + \beta^2 G'(u,v)}\big(e^{\beta^2 (G'(x,v) + G'(y,u) - G'(x,u) - G'(y,v))} - 1\big).
  \end{align*}
  We will consider separately the cases where $|z-w| < r^{1/4}$ and $|z-w| \ge r^{1/4}$. In the first case we note that since $G'(x,v) - G'(y,v) = O(r)$ and the same holds for $G'(y,u) - G'(x,u)$, we have
  \[|\E[W_r(z) W_s(w)]| \lesssim s^{-1}.\]
  This gets multiplied by the measure of the set $|z-w| < r^{1/4}$, which is $O(\sqrt{r})$, and we get the claim.  
  In the second case we will use the bound from Lemma~\ref{lemma:covariancebound} and the symmetry trick of swapping $x$ and $y$ to get
  \[|\E[W_r(z) W_s(w)]| \lesssim \frac{r s}{|z-w|^4},\]
  whose integral over the set $|z-w| \ge r^{1/4}$ is of the order $\sqrt{r}s$, which again goes to $0$ as $r \to 0$.
\end{proof}

\appendix

\section{A variant of Kolmogorov's continuity theorem}\label{app:kolmogorov}

The following lemma is a variant of Kolmogorov's continuity theorem. We provide a proof for the convenience of the reader.

\begin{lemma}\label{lem:Kolmogorov}
Let $\left( B(x,r), x \in D, 0 \leq r \leq 1 \right)$ be a random process indexed by a three-dimensional set. Assume that there exist $N,\eps,C>0$ such that for every $x,x' \in D, 0 \leq r,r' \leq 1$,
\[
\Expect{ \abs{B(x,r) - B(x',r')}^N } \leq C \left\{ (r \vee r') \norme{(x,r) - (x',r')} \right\}^{3+\eps}.
\]
Then there is a modification $\tilde{B}$ of $B$ satisfying: for every configuration $\omega$ and
for all $\alpha \in (0,\eps/N)$, there exists $C_\alpha(\omega) \in (0,\infty)$ such that for all $x,x' \in D, 0 \leq r,r' \leq 1$,
\[
\abs{\tilde{B}(x,r) - \tilde{B}(x',r')} \leq C_\alpha(\omega) \left\{ (r \vee r') \norme{(x,r) - (x',r')} \right\}^\alpha.
\]
\end{lemma}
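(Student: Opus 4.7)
The plan is to adapt the classical Kolmogorov continuity theorem via dyadic chaining, carefully exploiting the extra $(r \vee r')$ factor in the moment hypothesis. I will fix $\alpha \in (0, \eps/N)$ and set $\gamma := 3 + \eps - \alpha N > 3$. For each $n \geq 0$, let $\Lambda_n$ denote the lattice of admissible points $(x, k 2^{-n})$ with $x \in 2^{-n} \Z^2 \cap D$ and $k \in \{0, 1, \dots, 2^n\}$, and let $\mathcal{P}_n$ denote the set of pairs in $\Lambda_n$ at $\ell^\infty$-distance at most $3 \cdot 2^{-n}$. By Markov's inequality and the hypothesis, each such pair satisfies
\[
\P\Big(|B(x,r) - B(x',r')| > [(r \vee r') 2^{-n}]^\alpha\Big) \leq C [(r \vee r') 2^{-n}]^\gamma.
\]
Collecting pairs by the value $r \vee r' = k 2^{-n}$ (each class having cardinality $O(2^{2n})$ uniformly in $k$, since for fixed $k$ the constraint $|j-j'| \leq 3$ leaves $O(1)$ admissible $(j,j')$ and each $x$ has $O(1)$ neighbours), the total probability is bounded by $\sum_{k=0}^{2^n} O(2^{2n}) \cdot C (k \cdot 2^{-2n})^\gamma \leq C \cdot 2^{n(3-\gamma)}$, which is summable. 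Borel--Cantelli then almost surely produces a finite $N_0(\omega)$ past which every pair in $\mathcal{P}_n$ satisfies $|B(x,r) - B(x',r')| \leq [(r \vee r') 2^{-n}]^\alpha$.

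Next, I will perform a dyadic chaining on $\Lambda := \bigcup_n \Lambda_n$ to upgrade this to a pointwise Hölder-type estimate. For $p = (x,r), p' = (x',r') \in \Lambda$, I will select the scale $m$ with $2^{-m} \sim \|p - p'\|$, round to $p_m, p'_m \in \Lambda_m$, and interpolate via the dyadic projections $\pi_n(p), \pi_n(p') \in \Lambda_n$ for $n$ between the initial fine scale and $m$. In the ``large radius'' regime $r \vee r' \gtrsim \|p - p'\|$, the $r$-coordinates along the chain stay comparable to $r \vee r'$ and summing the resulting geometric series directly yields $|B(p) - B(p')| \leq C_\alpha(\omega) [(r \vee r') \|p - p'\|]^\alpha$. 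Applying the same scheme to chains connecting $(x, r)$ to the boundary $r = 0$ additionally produces the pointwise estimate $|B(x,r) - B(x,0)| \leq C_\alpha(\omega) r^{2\alpha}$; note that $B(\cdot, 0)$ is almost surely a single (random) constant, since the hypothesis with $r = r' = 0$ forces $\E|B(x,0) - B(x',0)|^N = 0$.

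The main obstacle will be the ``small radius'' regime $r \vee r' < \|p - p'\|$: direct chaining then only yields the strictly weaker bound $[(r \vee r' \vee \|p - p'\|) \|p - p'\|]^\alpha$. To circumvent this I will insert $(x, 0)$ as a waypoint and write
\[
|B(p) - B(p')| \leq |B(p) - B(x,0)| + |B(x,0) - B(x',0)| + |B(x',0) - B(p')|.
\]
The middle term vanishes, while the outer terms are bounded by $C_\alpha(\omega)(r \vee r')^{2\alpha}$ using the pointwise estimate above. The inequality $(r \vee r')^{2\alpha} \leq [(r \vee r') \|p - p'\|]^\alpha$ then holds precisely because $r \vee r' \leq \|p - p'\|$ in this regime. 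Finally, the uniform Hölder-type bound on $\Lambda$ implies uniform continuity of $B|_\Lambda$, so it extends continuously to a process $\tilde B$ on the full index set; since the moment hypothesis makes $B$ continuous in $L^N$, this extension agrees almost surely with $B$ at every fixed $(x, r)$ and is therefore a modification, with the target Hölder estimate passing to the limit by continuity.
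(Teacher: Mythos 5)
Your argument is correct and sits in the same Kolmogorov-type framework as the paper's own proof in Appendix~A (dyadic grids, Borel--Cantelli with the weight $(r\vee r')^\alpha$ kept inside the threshold, then chaining), but the chaining step is organised differently. The paper connects $r$ to $r'$ through a common dyadic anchor $k2^{-p}$ chosen inside $(r,r']$, so every intermediate radius along the chain is at most $r\vee r'$ and the weighted bound $K_\alpha\{(r\vee r')|r-r'|\}^\alpha$ falls out of a single geometric series with no case distinction; the $x$-coordinate is handled the same way. You instead round both coordinates to joint lattices, which allows the radius coordinate along the chain to exceed $r\vee r'$ at coarse scales; this is precisely what creates your ``small radius'' regime $r\vee r'<\|p-p'\|$, and you repair it by routing the chain through the plane $r=0$, observing that the hypothesis with $r=r'=0$ forces $B(\cdot,0)$ to be a.s.\ constant on the dyadic set, and combining $|B(x,r)-B(x,0)|\lesssim r^{2\alpha}$ with $(r\vee r')^{2\alpha}\le\{(r\vee r')\|p-p'\|\}^\alpha$ in that regime. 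This waypoint trick is valid (and exploits a degenerate feature of the hypothesis that the paper never uses), though it is avoidable: rounding the radius coordinate down rather than to the nearest grid point keeps all intermediate radii below $r\vee r'$ and makes the regime split disappear, which is in effect what the paper's anchoring achieves. Your refined union-bound count (grouping pairs by the value of $r\vee r'$) yields the same summable exponent $2^{-n(\eps-\alpha N)}$ as the paper's cruder count of $O(2^{3n})$ grid points, so it is harmless but buys nothing extra; the extension/modification step at the end is standard and matches the paper. One small point to spell out when writing this up: your ``vertical'' chain from $(x,r)$ to $(x,0)$ must also coarsen the $x$-coordinate (since $x$ need not lie on the coarse lattices), i.e.\ descend through the projections $\pi_n(x,r)$ and come back up at radius $0$; this costs nothing because $B(\cdot,0)$ is constant, but as literally stated the fixed-$x$ chain uses pairs that are not in your $\mathcal{P}_n$.
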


\begin{proof}
In this proof we will denote by $\Dc_2^n$ (resp. $\Dc_1^n$) the set of two-dimensional (resp. one-dimensional) dyadic points of generation $n$ lying in $D$ (resp. $[0,1]$). We will write $\Dc_2 = \cup_{n \geq 1} \Dc_2^n$ and $\Dc_1 = \cup_{n \geq 1} \Dc_1^n$. Take $\alpha < \eps/N$. If $x \in \Dc_2^n, r \in \Dc_1^n, r' = r+2^{-n}$, we have
\[
\Prob{ \abs{ B(x,r) - B(x,r') } \geq (r \vee r')^\alpha 2^{-n\alpha} }
\leq
C \left( (r \vee r') 2^{-n} \right)^{3+\eps-\alpha N}
\leq
C 2^{-n (3+\eps-\alpha N) }.
\]
By a union bound and because $\eps - \alpha N >0$, we get that
\[
\sum_{n \geq 1} \Prob{ \exists x \in \Dc_2^n: r \in \Dc_1^n, \abs{ B(x,r) - B(x,r+2^{-n}) } \geq (r+ 2^{-n})^\alpha 2^{-n\alpha} }
\]
is finite. By the Borel-Cantelli lemma, it implies that
\[
\sup \left\{ (r+2^{-n})^{-\alpha} 2^{n\alpha} \abs{ B(x,r) - B(x,r+2^{-n}) }: n \geq 1, x \in \Dc_2^n, r \in \Dc_1^n \right\}
\]
is almost surely finite (the quantity above is clearly finite for a finite number of $n$).
We can proceed in the same way for the first component to show that
\[
\sup \left\{
\frac{\abs{ B(x,r) - B(x',r') }}{\left\{ (r \vee r') \norme{(x,r) - (x',r')} \right\}^\alpha} :
\begin{array}{l}
n \geq 1, x,x' \in \Dc_2^n, \\
r,r' \in \Dc_1^n, \\
\norme{(x,r)-(x',r')}_{L^1} = 2^{-n}
\end{array} \right\}
\]
is almost surely finite. We denote this quantity by $K_\alpha$. Now consider $(x,r),(x,r') \in \Dc_2 \times \Dc_1$. Assume for instance that $r < r'$. Consider $p \geq 1$ s.t. $2^{-p} \leq r' - r \leq 2^{-p+1}$ and $k$ the smallest integer such that $r \leq k 2^{-p}$. Let $\ell \geq 0$ be s.t. $(x,r),(x,r') \in \Dc_2^{p+\ell} \times \Dc_1^{p+\ell}$. We can write $r$ and $r'$ as
\[
r = k2^{-p} - \eps_12^{-p-1} - \dots - \eps_\ell 2^{-p-\ell}
\mathrm{~and~}
r' = k2^{-p} + \eps'_02^{-k} + \eps'_12^{-p-1} + \dots + \eps'_\ell 2^{-p-\ell}
\]
with $\eps_i, \eps'_i \in \{0,1\}$. Define for all $i=0, \dots, \ell$,
\[
r_i = k2^{-p} - \eps_12^{-p-1} - \dots - \eps_i 2^{-p-i}
\mathrm{~and~}
r'_i = k2^{-p} + \eps'_02^{-k} + \eps'_12^{-p-1} + \dots + \eps'_i 2^{-p-i}.
\]
In particular, $r_i,r'_i \in \Dc_1^{p+i}$ and $r_i - r_{i+1}, r'_{i+1} - r'_i \in \{0,2^{-p-i-1}\}$. Notice that for all $i = 0 \dots \ell-1, r_i \vee r'_i \leq r'$. Similarly, there exists a sequence $(x_i, i=0 \dots 2\ell)$ with $x_{2\ell} = x$ such that
\begin{align*}
\forall i=0, \dots, \ell-1, x_{2i},x_{2i+1} \in \Dc_2^{p+i}, \norme{x_{2i} - x_{2i+1}}_{L^1} \in \{0, 2^{-p-i}\}.
\end{align*}

We can bound from above $\abs{B(x,r) - B(x_0,r_0)}$ by
\begin{align*}
& \sum_{i=0}^{\ell-1} \abs{B(x_{2i+2},r_{i+1}) - B(x_{2i+1},r_{i+1})}
+ \abs{B(x_{2i+1},r_{i+1}) - B(x_{2i},r_{i+1})}  + \abs{B(x_{2i},r_{i+1}) - B(x_{2i},r_{i})} \\
& \leq 3 K_\alpha (r')^\alpha \sum_{i=0}^{\ell-1} 2^{-(p+i+1)\alpha}
\leq 3 \frac{2^{-\alpha}}{1-2^{-\alpha}} K_\alpha (r')^\alpha 2^{-p \alpha}.
\end{align*}
The same bound holds true for $\abs{B(x,r') - B(x_0,r'_0)}$ and
\begin{align*}
\abs{B(x,r) - B(x,r')} & \leq 6 \frac{2^{-\alpha}}{1-2^{-\alpha}} K_\alpha (r')^\alpha 2^{-p \alpha} + \abs{ B(x_0,r_0,s_0) - B(x_0,r'_0,s_0)} \\
& \leq \left( 1 + 6\frac{2^{-\alpha}}{1-2^{-\alpha}} \right) K_\alpha (r')^\alpha 2^{-p \alpha}
\leq \left( 1 +6 \frac{2^{-\alpha}}{1-2^{-\alpha}} \right) K_\alpha (r')^\alpha (r'-r)^\alpha.
\end{align*}
A similar treatment can be applied when $x$ varies as well. If we define for all $(x,r) \in \Dc_2 \times \Dc_1$,
\[
\tilde{B}(x,r) =
\left\{
\begin{array}{ll}
B(x,r) & \mathrm{if~} K_\alpha < \infty \\
0 & \mathrm{otherwise}
\end{array}
\right.,
\]
and for all $(x,r) \in D \times [0,1]$,
\[
\tilde{B}(x,r) = \lim_{\substack{(x',r') \to (x,r)\\(x',r') \in \Dc_2 \times \Dc_1}} \tilde{B}(x',r'),
\]
then $\tilde{B}$ satisfies the requirements of the lemma.
\end{proof}

\section{A tail estimate for $|\mu^*(Q(0,1))|$}\label{app:tail}

In this appendix, we give some details on the estimate \eqref{eq:rem tail} from \cite{Leble2017}. Recall that we denote by $\mu^*$ the imaginary chaos associated to the whole-plane GFF $\Gamma^*$. The main result of \cite{Leble2017} is the existence of a constant $c^{**}(\beta) \in \R$ such that
\begin{equation}
\label{eq:app_LSZ}
\E \abs {\mu^*([0,1]^2) }^{2N} = N^{\beta^2 N/2} e^{c^{**}(\beta)N + o(N)}, \qquad \text{as} \quad N \to \infty.
\end{equation}
See \cite[Corollary 1.3]{Leble2017}.
Let us define the constant $c^*(\beta) >0$ by
\begin{equation}
\label{E:app_c*}
c^*(\beta) = \frac{\beta^2}{2} e^{-1-2c^{**}(\beta)/\beta^2} \qquad \text{so that} \qquad
c^{**}(\beta) = - \frac{\beta^2}{2}\Big(1+\log \frac{2c^*(\beta)}{\beta^2}\Big).
\end{equation}
In \cite[Corollary A.2]{Leble2017}, the authors argue that \eqref{eq:app_LSZ} yields the following tail estimate:
\begin{equation}\label{E:app_tail_lim}
\lim_{t \to \infty} t^{-4/\beta^2} \log \Prob{ |\mu^*(Q(0,1))| > t} = -c^*(\beta).
\end{equation}
The upper bound follows directly from \eqref{eq:app_LSZ} by optimising some Markov's inequality. The lower bound however necessitates more care and the proof in \cite{Leble2017} actually seems to contain a small gap. 
Indeed, two random variables $X$ and $Y$ are considered (with the notations therein $X = Y_\delta -C_2$ and $Y = |\int_D e^{i \beta h(x)} dx|^2$). These variables satisfy for all $k \geq 1$, $\E[X^k] \leq \E[Y^k]$. It is then inferred that $X$ is stochastically dominated by $Y$ in the sense that $\P(X > t) \leq \P(Y>t)$ for all $t$. This implication is however erroneous in general as one can see with the example $X = 1/3$ a.s. and $Y$ a Bernoulli random variable with $\P(Y=1) = 1-\P(Y=0) = 2/3$.
Since several main results of the current paper crucially use this tail estimate, we give an alternative elementary way of obtaining \eqref{E:app_tail_lim} from \eqref{eq:app_LSZ}.

Let us also mention that we have corrected a typo in \cite{Leble2017} where \eqref{E:app_tail_lim} is stated with $t^{2/\beta^2}$ rather than $t^{4/\beta^2}$. Indeed, with their notations, the $2k$-th moment of the absolute value of the imaginary chaos is equal to $Z_{k,\beta^2}$ rather than $Z_{k,2\beta^2}$, contrary to what is stated in \cite[Lemma A.1]{Leble2017}. Now, at the critical value $\beta = \sqrt{2}$, the tail \eqref{E:app_tail_lim} has a Gaussian behaviour. This is consistent with the fact that the imaginary chaos becomes a white noise at $\beta = \sqrt{2}$ (see \cite{complexGMC,JSW}).

\medskip

We now explain our proof of the lower bound of \eqref{E:app_tail_lim}.
Let $\nu$ be the law of $\abs{ \mu^*(Q(0,1))}$, $\alpha = 4/\beta^2$ and $c^* = c^*(\beta)$ be the constant defined in \eqref{E:app_c*}.
As mentioned, by a direct application of Markov's inequality, 
for all $d^* < c^*$, if $t$ is large enough
$\nu [t, \infty) \leq e^{-d^* t^\alpha}$.
We want to show the reverse inequality, i.e. that
\begin{equation}
    \label{E:app_tail_lower}
    \liminf_{t \to \infty} \frac{\log \nu [t,\infty)}{t^\alpha} \geq - c^*.
\end{equation}
For all $\eps>0$ small enough and $d^* < c^*$, letting $k(t)$ be the unique even integer in $[d^* \alpha t^\alpha, d^* \alpha t^\alpha + 2)$, we first claim that
\begin{equation}
\label{eq:appendix_tail}
\abs{ \int_0^\infty \frac{x^{k(t)}}{t^{k(t)}} \nu(dx) - \int_{(1-\eps)^{1/\alpha} t}^{(1+\eps)^{1/\alpha} t} \frac{x^{k(t)}}{t^{k(t)}} \nu(dx) }
\leq e^{-(1+o(1)) d^* (1+\eps^2/4)t^\alpha},
\end{equation}
where $o(1) \to 0$ as $t \to \infty$.

\begin{proof}[Proof of \eqref{eq:appendix_tail}]
Since $-\log(1+u) + 1 + u = 1 + u^2/2 + O(u^3)$ as $u \to 0$, there exists $u_0>0$ s.t. for all $u \in (-u_0,u_0)$, $-\log(1+u) + 1 + u \geq 1+ u^2/4$. We now fix $\eps \in (0,u_0)$ and $d^* < c^*$. There exists $t_0>0$ large enough such that for all $t \geq t_0$, $\nu [t, \infty) \leq \int_t^\infty \frac{1}{Z} e^{-d^* x^\alpha} dx$ where $Z$ is the renormalising constant $Z = \int_0^\infty e^{-d^* x^\alpha} dx$. We thus have the stochastic domination
\begin{equation}
\label{E:app_domination}
\nu(dx) \prec \frac{1}{Z} e^{-d^* x^\alpha} dx, \qquad \text{for} \quad x\geq t_0.
\end{equation}
We decompose the left hand side of \eqref{eq:appendix_tail} into several parts depending on whether $x \le t_0$, $x \in [t_0,(1-\eps)^{1/\alpha} t]$ or $x \ge (1+\eps)^{1/\alpha} t$. For the first part,
there exists $c >0$ depending on $\alpha, d^*, t_0$ such that
\[
\int_0^{t_0} \frac{x^{k(t)}}{t^{k(t)}} \nu(dx) \leq t_0^{k(t)} t^{-k(t)} \leq e^{-c t^\alpha \log t}.
\]
Noticing that $x \mapsto x^{k(t)} e^{-d^* x^\alpha}$ is increasing on $[ 0, ( \frac{k(t)}{d^* \alpha})^{1/\alpha} ]$ and then decreasing, we also have:
\begin{align*}
\int_{t_0}^{(1-\eps)^{1/\alpha} t} \frac{x^{k(t)}}{t^{k(t)}} \nu(dx)
& \leq \frac{1}{Z}
\int_{t_0}^{(1-\eps)^{1/\alpha} t} \frac{x^{k(t)}}{t^{k(t)}} e^{-d^* x^\alpha} dx
\leq \frac{1}{Z} (1-\eps)^{1/\alpha} t (1-\eps)^{k(t)/\alpha} e^{-d^* (1-\eps) t^\alpha} \\
& = e^{-(1+o(1)) \left( - d^* \log (1-\eps) + d^*(1-\eps) \right) t^\alpha}
\leq e^{-(1+o(1)) d^* (1+\eps^2/4) t^\alpha}
\end{align*}
because $\eps < u_0$.
Similarly (essentially change $\eps \leftarrow -\eps$ in the computations above)
\[
\int_{(1+\eps)^{1/\alpha} t}^{t^2} \frac{x^{k(t)}}{t^{k(t)}} \nu(dx)
\leq e^{-(1+o(1)) d^* (1+\eps^2/4)t^\alpha}.
\]
The integral from $t^2$ to $\infty$ is even smaller and putting things together, we have obtained \eqref{eq:appendix_tail}.
\end{proof}

Let $\eps >0$ be small and $d^* < c^*$ close enough to $c^*$ so that $\eps^2/4 > \log ( c^*/d^*).$
By \eqref{eq:app_LSZ},
\begin{align*}
\int_0^\infty \frac{x^{k(t)}}{t^{k(t)}} \nu(dx) & = t^{-k(t)} \exp \left( \frac{1}{\alpha} k(t) \log \frac{k(t)}{2} - \frac{1}{\alpha} \left( 1 + \log \frac{\alpha c^*}{2} \right) k(t) + o(k) \right) \\
& = \exp \left( - (1+o(1)) d^* \left( 1 + \log \frac{c^*}{d^*} \right) t^\alpha \right).
\end{align*}
Because we have chosen $\eps$ and $d^*$ so that $\eps^2/4 > \log (c^*/d^*)$, combining the above estimate with \eqref{eq:appendix_tail} leads to
\[
\int_{(1-\eps)^{1/\alpha} t}^{(1+\eps)^{1/\alpha} t} \frac{x^{k(t)}}{t^{k(t)}} \nu(dx) \geq \exp \left( - (1+o(1)) d^* \left( 1 + \log \frac{c^*}{d^*} \right) t^\alpha \right).
\]
In particular,
\begin{align*}
\nu \left[ (1-\eps)^{1/\alpha} t , \infty \right)
& \geq (1+\eps)^{-k(t)/\alpha}
\int_{(1-\eps)^{1/\alpha} t}^{(1+\eps)^{1/\alpha} t} \frac{x^{k(t)}}{t^{k(t)}} \nu(dx) \\
& \geq \exp \left( - (1+o(1)) d^* \left( 1 + \log \frac{c^*}{d^*(1-\eps)} \right) t^\alpha \right).
\end{align*}
We have thus shown that
\[
\liminf_{t \to \infty} \frac{\log \nu [t,\infty)}{t^\alpha} \geq - \frac{d^*}{1-\eps} \left( 1 + \log \frac{c^*}{d^*(1-\eps)} \right).
\]
By letting $d^* \to c^*$ and then $\eps \to 0$, we obtain \eqref{E:app_tail_lower} as wanted.

\bibliographystyle{alpha}
\bibliography{bibliography}

\begin{thebibliography}{DNPV12}

\bibitem[AJ21]{ajreconstruct}
Juhan Aru and Janne Junnila.
\newblock Reconstructing the base field from imaginary multiplicative chaos.
\newblock {\em Bulletin of the London Mathematical Society}, 53(3):861--870,
  2021.

\bibitem[AJJ22]{aru2022density}
Juhan Aru, Antoine Jego, and Janne Junnila.
\newblock Density of imaginary multiplicative chaos via {M}alliavin calculus.
\newblock {\em Probability Theory and Related Fields}, 184(3-4):749--803, 2022.

\bibitem[AJJ24]{MR4814241}
Juhan Aru, Antoine Jego, and Janne Junnila.
\newblock The density of imaginary multiplicative chaos is positive.
\newblock {\em Electron. Commun. Probab.}, 29:Paper No. 60, 11, 2024.

\bibitem[BJ10]{barral2010a}
Julien Barral and Xiong Jin.
\newblock Multifractal analysis of complex random cascades.
\newblock {\em Communications in Mathematical Physics}, 297(1):129--168, Jul
  2010.

\bibitem[BJM10a]{barral2010b}
Julien Barral, Xiong Jin, and Benoît Mandelbrot.
\newblock Convergence of complex multiplicative cascades.
\newblock {\em Ann. Appl. Probab.}, 20(4):1219--1252, 08 2010.

\bibitem[BJM10b]{barral2010c}
Julien Barral, Xiong Jin, and Benoît Mandelbrot.
\newblock Uniform convergence for complex $[0,1]$-martingales.
\newblock {\em Ann. Appl. Probab.}, 20(4):1205--1218, 08 2010.

\bibitem[DES93]{derrida93}
Bernard Derrida, Martin~R Evans, and Eugene~R Speer.
\newblock Mean field theory of directed polymers with random complex weights.
\newblock {\em Communications in mathematical physics}, 156:221--244, 1993.

\bibitem[DNPV12]{di2012hitchhikers}
Eleonora Di~Nezza, Giampiero Palatucci, and Enrico Valdinoci.
\newblock {Hitchhiker's guide to the fractional Sobolev spaces}.
\newblock {\em Bulletin des sciences math{\'e}matiques}, 136(5):521--573, 2012.

\bibitem[GKR23]{gkrimaginary}
Colin Guillarmou, Antti Kupiainen, and Rémi Rhodes.
\newblock Compactified imaginary {L}iouville theory.
\newblock {\em arXiv:2310.18226}, 2023.

\bibitem[HMP10]{HuMillerPeres2010}
Xiaoyu Hu, Jason Miller, and Yuval Peres.
\newblock Thick points of the {G}aussian free field.
\newblock {\em Ann. Probab.}, 38(2):896--926, 03 2010.

\bibitem[Jaf00]{Jaffard00}
St\'ephane Jaffard.
\newblock {On the {F}risch-{P}arisi conjecture}.
\newblock {\em Journal de Math\'ematiques Pures et Appliqu\'ees}, 79(6):525 --
  552, 2000.

\bibitem[JSV19]{JSV}
Janne Junnila, Eero Saksman, and Lauri Viitasaari.
\newblock On the regularity of complex multiplicative chaos.
\newblock {\em arXiv:1905.12027}, 2019.

\bibitem[JSW19]{JSW2}
Janne Junnila, Eero Saksman, and Christian Webb.
\newblock Decompositions of log-correlated fields with applications.
\newblock {\em Ann. Appl. Probab.}, 29(6):3786--3820, 2019.

\bibitem[JSW20]{JSW}
Janne Junnila, Eero Saksman, and Christian Webb.
\newblock Imaginary multiplicative chaos: moments, regularity and connections
  to the {I}sing model.
\newblock {\em Ann. Appl. Probab.}, 30(5):2099--2164, 2020.

\bibitem[LRV15]{complexGMC}
Hubert Lacoin, R{\'e}mi Rhodes, and Vincent Vargas.
\newblock {Complex Gaussian Multiplicative Chaos}.
\newblock {\em Communications in Mathematical Physics}, 337(2):569--632, Jul
  2015.

\bibitem[LSZ17]{Leble2017}
Thomas Lebl{\'e}, Sylvia Serfaty, and Ofer Zeitouni.
\newblock {Large Deviations for the Two-Dimensional Two-Component Plasma}.
\newblock {\em Communications in Mathematical Physics}, 350(1):301--360, Feb
  2017.

\bibitem[Mey93]{Meyer}
Yves Meyer.
\newblock {\em Wavelets and Operators}, volume~1 of {\em Cambridge Studies in
  Advanced Mathematics}.
\newblock Cambridge University Press, 1993.

\bibitem[MP10]{morters_peres_2010}
Peter Mörters and Yuval Peres.
\newblock {\em Brownian Motion}.
\newblock Cambridge Series in Statistical and Probabilistic Mathematics.
  Cambridge University Press, 2010.

\bibitem[MS17]{ImaginaryGeometryIV}
Jason Miller and Scott Sheffield.
\newblock Imaginary geometry {IV}: interior rays, whole-plane reversibility,
  and space-filling trees.
\newblock {\em Probability Theory and Related Fields}, 169(3):729--869, Dec
  2017.

\bibitem[RV10]{RobertVargas2010}
Raoul Robert and Vincent Vargas.
\newblock Gaussian multiplicative chaos revisited.
\newblock {\em Ann. Probab.}, 38(2):605--631, 2010.

\bibitem[RV14]{rhodes2014}
Rémi Rhodes and Vincent Vargas.
\newblock {G}aussian multiplicative chaos and applications: {A} review.
\newblock {\em Probab. Surveys}, 11:315--392, 2014.

\bibitem[SV06]{MR2190038}
Daniel~W. Stroock and S.~R.~Srinivasa Varadhan.
\newblock {\em Multidimensional diffusion processes}.
\newblock Classics in Mathematics. Springer-Verlag, Berlin, 2006.
\newblock Reprint of the 1997 edition.

\bibitem[Tri83]{Triebel}
Hans Triebel.
\newblock {\em Theory of function spaces}.
\newblock Birkhäuser Basel, 1983.

\bibitem[Ver10]{Veraar10}
Mark Veraar.
\newblock Regularity of {G}aussian white noise on the d-dimensional torus.
\newblock {\em Banach Center Publications}, 95, 10 2010.

\bibitem[VH14]{van2014probability}
Ramon Van~Handel.
\newblock Probability in high dimension.
\newblock {\em Lecture Notes (Princeton University)}, 2014.

\bibitem[Zak81]{Zakai1981}
Moshe Zakai.
\newblock {Some Classes of Two-Parameter Martingales}.
\newblock {\em The Annals of Probability}, 9(2):255 -- 265, 1981.

\end{thebibliography}

\end{document}